\def\drawx{\draw[-,solid] (-3pt,-3pt) -- (3pt,3pt);\draw[-,solid] (-3pt,3pt) -- (3pt,-3pt);}
\tikzset{
	root/.style={circle, draw=black, fill=white, inner sep=0pt, minimum size=0.7mm},
	dot/.style={circle,fill=black,draw=black, solid,inner sep=0pt,minimum size=0.5mm},
	square/.style={rectangle,fill=black,draw=black, solid,inner sep=0pt,minimum size=1mm},
	empty/.style={circle,fill=white,draw=white, solid,inner sep=0pt,minimum size=0.5mm},
	var/.style={circle,fill=black!10,draw=black,inner sep=0pt, minimum size=
	2mm},
	symb/.style={circle,fill=symbols,draw=symbols, solid,inner sep=0pt,minimum size=0.5mm},
	yy/.style={circle,fill=gray!20,draw=black,inner sep=0pt,minimum size=0.8mm},
	>=stealth,
	dotred/.style={circle,fill=black!50,inner sep=0pt, minimum size=2mm},
	generic/.style={semithick,shorten >=1pt,shorten <=1pt},
	dist/.style={ultra thick,draw=testcolor,shorten >=1pt,shorten <=1pt},
	testfcn/.style={ultra thick,testcolor,shorten >=1pt,shorten <=1pt,<-},
	testfcnx/.style={ultra thick,testcolor,shorten >=1pt,shorten <=1pt,<-,
		postaction={decorate,decoration={markings,mark=at position 0.6 with {\drawx}}}},
	kprime/.style={semithick,shorten >=1pt,shorten <=1pt,densely dashed,->},
	kprimex/.style={semithick,shorten >=1pt,shorten <=1pt,densely dashed,->,
		postaction={decorate,decoration={markings,mark=at position 0.4 with {\drawx}}}},
	kernel/.style={semithick,shorten >=1pt,shorten <=1pt,->},
	multx/.style={shorten >=1pt,shorten <=1pt,
		postaction={decorate,decoration={markings,mark=at position 0.5 with {\drawx}}}},
	kernelx/.style={semithick,shorten >=1pt,shorten <=1pt,->,
		postaction={decorate,decoration={markings,mark=at position 0.4 with {\drawx}}}},
	kernel1/.style={->,semithick,shorten >=1pt,shorten <=1pt,postaction={decorate,decoration={markings,mark=at position 0.45 with {\draw[-] (0,-0.1) -- (0,0.1);}}}},
	kernel2/.style={->,semithick,shorten >=1pt,shorten <=1pt,postaction={decorate,decoration={markings,mark=at position 0.45 with {\draw[-] (0.05,-0.1) -- (0.05,0.1);\draw[-] (-0.05,-0.1) -- (-0.05,0.1);}}}},
	kernelBig/.style={semithick,shorten >=1pt,shorten <=1pt,decorate, decoration={zigzag,amplitude=1.5pt,segment length = 3pt,pre length=2pt,post length=2pt}},
	rho/.style={dotted,semithick,shorten >=1pt,shorten <=1pt},
	renorm/.style={shape=circle,fill=white,inner sep=1pt},
	labl/.style={shape=rectangle,fill=white,inner sep=1pt},
	xi/.style={circle,fill=symbols!10,draw=symbols,inner sep=0pt,minimum size=1.2mm},
	xix/.style={crosscircle,fill=symbols!10,draw=symbols,inner sep=0pt,minimum size=1.2mm},
	xib/.style={circle,fill=symbols!10,draw=symbols,inner sep=0pt,minimum size=1.6mm},
	xibx/.style={crosscircle,fill=symbols!10,draw=symbols,inner sep=0pt,minimum size=1.6mm},
	not/.style={circle,fill=symbols,draw=symbols,inner sep=0pt,minimum size=0.5mm},
	>=stealth,
	}
\colorlet{symbols}{blue!90!black}
\def\DeclareSymbol#1#2#3{\expandafter\gdef\csname MH@symb@#1\endcsname{\tikz[baseline=#2,scale=0.15]{#3}}%
\expandafter\gdef\csname MH@symb@#1s\endcsname{\scalebox{0.6}{\tikz[baseline=#2,scale=0.15]{#3}}}}
\def\<#1>{\csname MH@symb@#1\endcsname}
\definecolor{darkergreen}{rgb}{0.0, 0.5, 0.0}
\numberwithin{equation}{section}
\def\theequation{\arabic{section}.\arabic{equation}}
\newcommand{\be}{\begin{eqnarray}}
\newcommand{\ee}{\end{eqnarray}}
\newcommand{\ce}{\begin{eqnarray*}}
\newcommand{\de}{\end{eqnarray*}}
\newtheorem{theorem}{Theorem}[section]
\newtheorem{lemma}[theorem]{Lemma}
\newtheorem{remark}[theorem]{Remark}
\newtheorem{definition}[theorem]{Definition}
\newtheorem{proposition}[theorem]{Proposition}
\newtheorem{Examples}[theorem]{Example}
\newtheorem{corollary}[theorem]{Corollary}
\newcommand{\rmk}[1]{\textcolor{red}{#1}}
\newcommand{\LL}{\mathscr{L}}
\newcommand{\UU}{\mathscr{U}}
\def\Wick#1{\,\colon\! #1 \colon}
\def\eps{\varepsilon}
\def\p{\partial}
\def\l{\lambda}
\def\la{\langle}
\def\ra{\rangle}
\def\[{{\Big[}}
\def\]{{\Big]}}
\def\({{\Big(}}
\def\){{\Big)}}
\def\bx{{\mathbf{x}}}
\def\dif{{\mathord{{\rm d}}}}
\def\no{\nonumber}
\def\={&\!\!=\!\!&}
\def\qi{{ [i]}}
\def\qj{{ [j]}}
\def\ql{{ [l]}}
\def\bC{{\mathbf C}}
\def\cI{{\mathcal I}}
\def\cN{{\mathcal N}}
\def\cZ{{\mathcal Z}}
\def\mR{{\mathbb R}}
\def\mZ{{\mathbb Z}}
\def\1{{\mathbf{1}}}
\def\sD{{\mathscr D}}
\def\sL{{\mathscr L}}
\def\E{\mathbf E}
\def\geq{\geqslant}
\def\leq{\leqslant}
\def\ge{\geqslant}
\def\le{\leqslant}
\def\eps{\varepsilon}
\def\p{\partial}
\def\l{\lambda}
\def\la{\langle}
\def\ra{\rangle}
\def\[{{\Big[}}
\def\]{{\Big]}}
\def\({{\Big(}}
\def\){{\Big)}}
\def\bx{{\mathbf{x}}}
\def\dif{{\mathord{{\rm d}}}}
\def\no{\nonumber}
\def\={&\!\!=\!\!&}
\def\bt{\begin{theorem}}
\def\et{\end{theorem}}
\def\bl{\begin{lemma}}
\def\el{\end{lemma}}
\def\br{\begin{remark}}
\def\er{\end{remark}}
\def\bx{\begin{Examples}}
\def\ex{\end{Examples}}
\def\bd{\begin{definition}}
\def\ed{\end{definition}}
\def\bp{\begin{proposition}}
\def\ep{\end{proposition}}
\def\bc{\begin{corollary}}
\def\ec{\end{corollary}}
\def\geq{\geqslant}
\def\leq{\leqslant}
\def\ge{\geqslant}
\def\le{\leqslant}
 \def\R{\mathbb R}
 \def\R{\mathbb R}
\begin{document}
\begin{abstract}
In this paper we study
the large $N$ limit of the $O(N)$-invariant linear sigma model, which is a vector-valued
generalization of the $\Phi^4$ quantum field theory, on the three dimensional torus.
We study the problem via its stochastic quantization, which yields
a coupled system of $N$ interacting SPDEs.
We prove  tightness of the invariant measures in the
large $N$ limit. For large enough mass or small enough coupling constant, they converge to the (massive) Gaussian free field  at a rate of order $1/\sqrt N$ with respect to the Wasserstein distance.
We also obtain tightness results for certain $O(N)$ invariant observables.
These generalize some of the results in \cite{SSZZ20} from two dimensions to three dimensions.
The proof leverages the method recently developed by \cite{GH18}
and combines many new techniques such as uniform in $N$ estimates on perturbative objects as well as the solutions.
\end{abstract}

\title{Large $N$ limit  of the $O(N)$ linear sigma model in 3D}

\author{Hao Shen}
\address[H. Shen]{Department of Mathematics, University of Wisconsin - Madison, USA}
\email{pkushenhao@gmail.com}

\author{Rongchan Zhu}
\address[R. Zhu]{Department of Mathematics, Beijing Institute of Technology, Beijing 100081, China}
\email{zhurongchan@126.com}

\author{Xiangchan Zhu}
\address[X. Zhu]{ Academy of Mathematics and Systems Science,
Chinese Academy of Sciences, Beijing 100190, China}
\email{zhuxiangchan@126.com}

\date{\today}

\maketitle

\setcounter{tocdepth}{1}
\tableofcontents

\section{Introduction}


In this paper, we continue the study initiated in \cite{SSZZ20}
on the application of singular SPDE methods to large $N$ problems in quantum field theory (QFT).
Large $N$ problems in QFT generally refer to the study of asymptotic behaviors  of QFT models as the dimensionality of the target space where the quantum fields take values tends to infinity.
Physicists' study of large N problems in QFT originates
from the seminal work by Wilson \cite{wilson1973quantum} and Gross--Neveu \cite{gross1974dynamical}, and soon flourished following the work by t'Hooft
\cite{tHooft1974planar} who applied it to gauge theories, which influenced many aspects of probability. 
We refer to \cite[Section~1.1]{SSZZ20} for a more thorough exposition of the background and motivation for large $N$ methods in QFT.
A prototype model, which we consider in this paper,
is the
 $O(N)$-invariant linear sigma model given by the  (formal) measure
\begin{equation}\label{e:Phi_i-measure}
\dif\nu^N(\Phi)\eqdef \frac{1}{C_N}\exp\bigg(-\int_{\mathbb T^d} \sum_{j=1}^N|\nabla \Phi_j|^2+m \sum_{j=1}^N\Phi_j^2
+\frac{\l}{2N} \Big(\sum_{j=1}^N\Phi_j^2\Big)^2 \dif x\bigg)\mathcal D \Phi
\end{equation}
over $\R^N$ valued fields $\Phi=(\Phi_1,\Phi_2,...,\Phi_N)$,
where $\mathbb T^d$ is the $d$-dimensional torus and $C_N$ is a normalization constant (partition function).
This is an $N$-component generalization of the $\Phi^4_d$ model,
which is symmetric under the action by the orthogonal group $O(N)$ on the field $\Phi$.
In this paper we focus on $d=3$.

Our main tool to investigate the large $N$ behavior of the above model \eqref{e:Phi_i-measure} is
the system of SPDEs arising as its stochastic quantization:
\begin{equation}\label{eq:main}
\LL \Phi_i= -\frac{\l}{N}\sum_{j=1}^N \Phi_j^2\Phi_i+\xi_i,  
\end{equation}
where $\LL=\p_t-\Delta+m$ with $m\geq0$, and $i\in\{1,\cdots,N\}$.
The collection $(\xi_i)_{i=1}^N$ consists of $N $ independent space-time white noises on a probability space $(\Omega,\mathcal{F},\mathbf{P})$.
The utilization of the
dynamic \eqref{eq:main} to probe large $N$ properties of
the QFT model \eqref{e:Phi_i-measure} lead to connections with dynamical mean field theory, see \cite[Section~1.2]{SSZZ20} for more discussions on this connection.  We also note that for finite $N$, the measure $\nu^N$ formally given by \eqref{e:Phi_i-measure} can be constructed rigorously as the unique invariant measure of SPDEs \eqref{eq:main} -- see Lemma \ref{lem:zz1}, and we henceforth denote by $\nu^N$ this well-defined measure. 

In the 2D paper \cite{SSZZ20}, since the SPDE is less singular,
 a series of  results was obtained.
It is proved that the non-stationary dynamics
converge to a mean field dynamic
$\LL \Psi_i= -\l \E[\Psi_i^2]\Psi_i+\xi_i$
(which needs to be interpreted via a suitable renormalization)
as $N\to\infty$.
Also, for large enough mass, the invariant measures converge to the (massive) Gaussian free field which is the
unique invariant measure of the mean-field dynamic.
Moreover, this paper obtained tightness in suitable Besov spaces of   some $O(N)$ invariant observables as $N\to \infty$,
and also proved exact correlation formulae for these observables.

In $d=3$, the SPDE becomes much more singular.  For $N=1$, namely the dynamical $\Phi^4_3$ model,
the construction of  local solutions was achieved by \cite{Hairer14} using the theory of regularity structures and then %
\cite{MR3846835} using paracontrolled distributions
developed in
\cite{GIP15}; and global solutions were studied in \cite{MW18,GubCMP19,AK17,moinat2020space,GH18}.
One should be able to extend these constructions and estimates
to the vector-valued case for a finite and {\it fixed} $N>1$.

The goal of  this article is to study the asymptotic behavior as $N\to \infty$ of the  invariant measures \eqref{e:Phi_i-measure} as well as $O(N)$-invariant observables, in $d=3$. Given the more singular nature in $d=3$,
it is not the purpose of this article to extend all the results in the 2D paper \cite{SSZZ20} to 3D. Instead we only focus on large $N$ limit of the measure $\nu^N$ in \eqref{e:Phi_i-measure} and tightness of certain observables in $d=3$.
Our proof is self-contained based on the techniques from mean field theory and recent progress of singular SPDEs, and does not rely on methods in constructive field theory such as cluster expansion and correlation inequalities. Our approach may bring a new perspective to study the large $N$ limit problem.

\subsection{Main results}\label{sec:main-res}

Our first result shows that assuming $m$ to be sufficiently large or $\lambda$ to be sufficiently small,
the invariant measures converge to the (massive) Gaussian free field.
Note that since the interaction term in the measure \eqref{e:Phi_i-measure}
has a sum besides the factor $1/N$,
it is far from  being obvious that the large $N$ limit of the measures
is a Gaussian free field.
This was only heuristically predicted by physicists (e.g.\cite{wilson1973quantum}) at the level of perturbation theory: namely, after suitable renormalization, all the Feynman diagrams are of order $N^\alpha$ for some $\alpha<0$ except for the diagrams corresponding to the Gaussian free field (see Section~\ref{sec:heu} below for more discussion).

To state the result, let $\nu\eqdef \mathcal{N}(0,\frac12(m-\Delta)^{-1})$ be the (massive) Gaussian free field.
Consider the projection onto the $i^{th}$ component of the field $\Phi$
\begin{equ}[e:Pr-Pi]
	\Pi_i:\mathcal{S}'(\mathbb{T}^d)^N\rightarrow \mathcal{S}'(\mathbb{T}^d),
	\qquad \Pi_i(\Phi)\eqdef \Phi_i.
\end{equ}
Noting that $\nu^N$ is a measure on $\mathcal{S}'(\mathbb{T}^d)^N$, we define the marginal law $\nu^{N,i}\eqdef \nu^N\circ \Pi_i^{-1}$. Furthermore, consider
\begin{equ}[e:Pr-Pik]
	\Pi^{(k)}:\mathcal{S}'(\mathbb{T}^d)^N\rightarrow \mathcal{S}'(\mathbb{T}^d)^k,
	\qquad
	\Pi^{(k)}(\Phi)=(\Phi_i)_{1\leq i\leq k}
\end{equ}
and define the marginal law of the first $k$ components by $\nu^{N}_k \eqdef \nu^N\circ (\Pi^{(k)})^{-1}$.
We denote by $\mathbb{W}_{2,k}$  the
		Wasserstein distance, with the precise definition given in \eqref{e:W2}.
We recall the definitions of Besov spaces $B^\alpha_{p,q}$, $\bC^\alpha$ and $H^\alpha$ in
Section~\ref{sec:nota} and Appendix~\ref{sec:pre}.


\begin{theorem} 
\label{th:1.2}
		For  any $(m,\l) \in (0,\infty)\times [0,\infty)$ and  every $i\ge 1$, the sequence of probability measures $(\nu^{N,i})_{N \geq 1}$ is tight on $H^{-\frac12-\kappa}$ for any $\kappa>0$.
		Moreover, there exists a constant $c_0>0$ such that for all $(m,\l) \in [1,\infty)\times [0,\infty)$ satisfying
		 $m\geq 1+c_0\l(1+\l^{59})$ and every  $k\ge 1$, there exists a constant $C_k>0$ such that $\mathbb{W}_{2,k}(\nu^{N}_k,\nu^{\otimes k})\le C_k N^{-\frac{1}{2}}$. 
\end{theorem}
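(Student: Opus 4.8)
The plan is to analyze the stationary solutions of the renormalized version of the stochastic quantization system \eqref{eq:main} and to establish all estimates \emph{uniformly in $N$}, adapting the approach of \cite{GH18} for $\Phi^4_3$. Write $\<1>_i\eqdef\LL^{-1}\xi_i$ for the stationary stochastic convolution and set $\Psi_i\eqdef\<1>_i$, which has law $\nu$ (indeed $\nu=\mathcal N(0,\frac12(m-\Delta)^{-1})$ is exactly the invariant measure of the linear equation $\LL\Psi_i=\xi_i$). Let the finite family of renormalized stochastic data underlying the solution theory consist of the un-averaged objects $\Psi_i,\Wick{\Psi_i^2},\dots$ and the \emph{averaged} ones $\frac1N\sum_j\Wick{\Psi_j^2}$, $\frac1N\sum_j\Wick{\Psi_j^2\Psi_i}$, $\dots$ (constructed with suitable, possibly $N$-dependent, renormalization constants). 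The crucial point is that, since the $\xi_j$ are independent, each averaged object is a normalized sum of $N$ i.i.d.\ mean-zero random distributions, hence of size $O(N^{-1/2})$ in the relevant $\bC^{-\alpha}$ by Gaussian hypercontractivity, whereas every un-averaged object obeys the usual bounds uniformly in $N$. These two facts drive the whole argument.

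For the tightness assertion I would fix a stationary solution $\Phi=(\Phi_1,\dots,\Phi_N)$ and use the decomposition $\Phi_i=\Psi_i+(\text{finitely many explicit lower-order stochastic terms})+v_i$ underlying the solution theory of $\Phi^4_3$, with $v_i$ the remainder of positive Sobolev regularity; here \emph{every} explicit stochastic term beyond $\Psi_i$ inherits a factor $\frac1N\sum_j(\cdot)$ from the nonlinearity. Running the Gubinelli--Hofmanov\'a a priori/energy estimate for $v_i$, but carrying all the factors $\frac1N\sum_j(\cdot)$ and inserting the uniform bounds above, yields $\mathbf E\|v_i\|^p_{H^{\frac12+\kappa}}\le C$, hence $\mathbf E\|\Phi_i\|^p_{H^{-\frac12-\kappa}}\le C$, uniformly in $N$; exchangeability of the components makes $C$ independent of $i$. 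Tightness of $(\nu^{N,i})_{N\ge1}$ on $H^{-\frac12-\kappa}$ then follows from the compact Sobolev embedding.

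For the quantitative statement I would couple the $O(N)$ system with the decoupled linear system $\LL\Psi_i=\xi_i$ driven by the \emph{same} noises; since $\Psi_i$ is stationary with marginal $\nu$ and the $\Psi_i$ are independent, $(\Psi_1,\dots,\Psi_k)$ has law $\nu^{\otimes k}$, so by the definition \eqref{e:W2} it suffices to show $\sum_{i=1}^k\mathbf E\|\Phi_i-\Psi_i\|^2_{H^{-\frac12-\kappa}}\le C_kN^{-1}$. Setting $h_i\eqdef\Phi_i-\Psi_i$ and subtracting the equations, the counterterms of the renormalized $\Phi$-equation are arranged exactly to absorb the Wick constants generated when the ``pure $\Psi$'' contribution $-\frac\lambda N\sum_j\Psi_j^2\Psi_i$ is expanded, leaving (up to negligible $O(N^{-1})$ pieces) the source $-\lambda\,\frac1N\sum_{j\neq i}\Wick{\Psi_j^2}\,\Psi_i$, which is $O(\lambda N^{-1/2})$ in a space of negative regularity --- the product being well defined since $\Psi_j$ and $\Psi_i$ are independent for $j\neq i$ --- while every remaining term carries at least one factor of $h$. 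One then gives $h_i$ its own paracontrolled decomposition (rough part driven by the $O(N^{-1/2})$ source, paracontrolled corrections, smooth remainder) and runs a fixed-point/Gronwall argument for $(h_j)_{j\le k}$ \emph{simultaneously} with a uniform-in-$N$ bound on $\frac1N\sum_j\|h_j\|^2$: the latter reappears in each equation because the average $\frac1N\sum_j\Phi_j^2$ entering the $i$-th equation depends, beyond its $O(N^{-1/2})$ stochastic part $\frac1N\sum_j\Wick{\Psi_j^2}$, on all the $h_j$ through $\frac1N\sum_j(2\Psi_jh_j+h_j^2)$, so the scheme is genuinely coupled. In the regime $m\ge1+c_0\lambda(1+\lambda^{59})$ the resolvent $\LL^{-1}=(\p_t-\Delta+m)^{-1}$ contributes a negative power of $m$ on the spaces in play, which beats the accumulated powers of $\lambda$ and makes the relevant map a contraction on a small ball; closing it gives $\mathbf E\|h_i\|^2_{H^{-\frac12-\kappa}}\le C_kN^{-1}$, hence the stated Wasserstein bound.

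The hardest part is making the Gubinelli--Hofmanov\'a machinery simultaneously uniform in $N$ and quantitative in the interaction strength. Concretely: (i) one must construct all renormalized stochastic objects with the correct, possibly $N$-dependent, constants and prove that \emph{every} averaged object is $O(N^{-1/2})$ and \emph{every} other object is $O(1)$, uniformly; (ii) the a priori energy estimate and all accompanying paracontrolled and commutator estimates must be redone while carrying the combinatorial weights $\frac1N\sum_j$, checking that no term secretly produces a growing power of $N$; and (iii) in the coupling argument one must bound products of the low-regularity difference fields $h_i$ against the $O(N^{-1/2})$ averaged Wick objects and control the coupled quantity $\frac1N\sum_j\|h_j\|^2$, uniformly in $N$ and with fully explicit dependence on $\lambda$ and $m$ --- it is the bookkeeping of this $\lambda$-dependence through the whole chain of nonlinear estimates that forces the threshold $m\ge1+c_0\lambda(1+\lambda^{59})$.
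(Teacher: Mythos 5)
Your high-level strategy matches the paper's: decompose the stationary solution, construct a stationary coupling of the interacting system with the decoupled linear dynamics, exploit that averaged stochastic objects are $O(N^{-1/2})$ by independence, and establish uniform-in-$N$ bounds on the remainder. The observation that the Wasserstein rate ultimately comes from the $N^{-1/2}$-smallness of the averaged Wick objects together with $O(N)$-exchangeability is exactly right. But there is a genuine gap in your mechanism for closing the quantitative estimate $\E\|h_i\|_{H^{-\frac12-\kappa}}^2\lesssim N^{-1}$, and your explanation of where the threshold on $m$ comes from is not the correct one.

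You propose a ``fixed-point/Gronwall argument for $(h_j)_{j\le k}$ simultaneously with a uniform-in-$N$ bound on $\frac1N\sum_j\|h_j\|^2$'', noting that the $i$-th equation for $h_i$ receives the $O(N^{-1/2})$ source $-\lambda\frac1N\sum_j\Wick{\Psi_j^2}\,\Psi_i$ together with the coupling term $-\lambda\frac1N\sum_j(2\Psi_jh_j+h_j^2)(\Psi_i+h_i)$. But if $\frac1N\sum_j\|h_j\|^2$ is only known to be $O(1)$, the coupling term is $O(1)$, not $O(N^{-1/2})$, and neither a contraction nor Gronwall step can propagate the $N^{-1/2}$-smallness of the external source to each $h_i$. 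Moreover in the stationary setting there is no small initial datum to feed a Cauchy-problem contraction, and the paper's threshold $m\ge 1+c_0\lambda(1+\lambda^{59})$ allows $m$ as small as $1+\varepsilon$ when $\lambda$ is small, which is incompatible with an argument that needs $\LL^{-1}$ to have small operator norm. What the paper actually does in Theorem~\ref{th:main} is establish the sharp bound $\E\sum_{i=1}^N\|Y_i\|^2_{L^2}\lesssim 1$, i.e.\ $O(1)$ and not $O(N)$, after which $O(N)$-exchangeability and stationarity immediately give $\E\|Y_i\|^2\lesssim N^{-1}$. Obtaining $O(1)$ for the \emph{sum} is the whole game, and it rests on two ingredients your sketch omits: (i) the dissipation $\frac{\lambda}{N}\big\|\sum_iY_i^2\big\|_{L^2}^2$ coming from the cubic nonlinearity, which appears on the left of the energy balance (Lemma~\ref{den}) and is what absorbs the random fluctuations, via $\frac{\lambda}{N}\big\|\sum_iY_i^2\big\|_{L^2}^2\ge\frac{\lambda}{N}\big(\sum_i\|Y_i\|_{L^2}^2\big)^2$; and (ii) the mean-field \emph{centering} of the random coefficients $R_N^1,R_N^2,Q_N^5$ multiplying $\sum_i\|Y_i\|^2$ in the energy inequality, whose means are absorbed into the mass $m$ --- this is where the threshold on $m$ really comes from, as a mean-field stability condition --- while the centered fluctuations have variance $O(N^{-1})$ by Lemma~\ref{lem:M} and are paid for by the dissipation in (i).

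A secondary divergence: you propose a paracontrolled decomposition of each $h_i$ in spaces of positive regularity, whereas the paper deliberately stays within $L^2$-type norms and introduces $\varphi_i=Y_i+\mathscr D^{-1}P_i$ so that the singular paraproduct contribution in the energy identity cancels exactly against $\langle Y_i,P_i\rangle$ (Lemma~\ref{den}). The paper flags this explicitly: carrying a paracontrolled ansatz for each of $N$ components through the weights $\frac1N\sum_j$ uniformly in $N$ is ``rather technical'' and is precisely what the $\varphi_i$-cancellation avoids. If you pursue the paracontrolled route you still need both the sharp $O(1)$ bound on the summed remainder and a device to absorb the $O(1)$ self-consistent feedback; in the paper the dissipation/centering pair plays both of those roles.
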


Our next result is concerned with $O(N)$-invariant observables for the invariant measure.
We refer to  \cite[Section~1]{SSZZ20} for more discussion on the motivation of studying  observables for QFT models with continuous symmetries.
Here a natural quantity  that is invariant under $O(N)$-rotations is the squared ``length'' of $\Phi$, that is suitably renormalized and scaled with respect to $N$:
\begin{equ}[e:twoObs]
	\frac{1}{\sqrt{N}} \sum_{i=1}^N\Wick{\Phi^2_i},
	\qquad \Phi = (\Phi_i)_{1\leq i\leq N}\thicksim \nu^N.
\end{equ}
The precise definition  is given by \eqref{eq:o1} in Section~\ref{sec:inv}.
We establish the large $N$ tightness of these observables as random fields in suitable Besov spaces.

\begin{theorem}\label{th:1.3}
	 For $(m,\l) \in [1,\infty)\times [0,\infty)$ satisfying $m\geq 1+c_0\l(1+\l^{59})$ as in Theorem \ref{th:1.2},
	 the sequence of random variables
	  $\big( \frac{1}{\sqrt{N}} \sum_{i=1}^N\Wick{\Phi^2_i} \big)_{N\ge 1}$ is tight on the Besov space $B^{-1-\kappa}_{1,1}$  for any $\kappa>0$.
\end{theorem}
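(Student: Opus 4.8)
The plan is to exploit the stationarity of the SPDE system \eqref{eq:main} under the invariant measure $\nu^N$ and represent the observable $\frac{1}{\sqrt N}\sum_{i=1}^N \Wick{\Phi_i^2}$ in terms of the decomposition of each $\Phi_i$ into its linear (Ornstein--Uhlenbeck) part and a remainder coming from the nonlinearity. Write $\Phi_i = \<1> + \Phi_i^{\sharp}$ where $\<1>$ denotes the stationary solution of $\LL \<1>_i = \xi_i$ (the massive stochastic convolution) and $\Phi_i^{\sharp}$ collects the more regular pieces produced by the Da Prato--Debussche / paracontrolled expansion. Then $\Wick{\Phi_i^2} = \Wick{\<1>_i^2} + 2\<1>_i \Phi_i^{\sharp} + (\Phi_i^{\sharp})^2 - (\text{correction})$, and so
\begin{equ}
	\frac{1}{\sqrt N}\sum_{i=1}^N \Wick{\Phi_i^2}
	= \frac{1}{\sqrt N}\sum_{i=1}^N \Wick{\<1>_i^2}
	+ \frac{2}{\sqrt N}\sum_{i=1}^N \<1>_i \Phi_i^{\sharp}
	+ \frac{1}{\sqrt N}\sum_{i=1}^N \big((\Phi_i^{\sharp})^2 - \cdots\big).
\end{equ}
The first term is a purely Gaussian (Wick) object: by independence of the $\xi_i$ it is a normalized sum of i.i.d.\ mean-zero random fields, so its second moment in $B^{-1-\kappa}_{1,1}$ is $O(1)$ uniformly in $N$ by a standard Wick/hypercontractivity computation (this is exactly the kind of perturbative estimate already needed for Theorem \ref{th:1.2}, and in fact the Gaussian chaos bounds give tightness of this term alone).

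First I would establish uniform-in-$N$ bounds on the solution $\Phi_i$ and on $\Phi_i^{\sharp}$ in the relevant (negative-regularity, resp.\ positive-regularity) Besov/Hölder spaces, in expectation under the stationary measure — this is the content of the "uniform in $N$ estimates on perturbative objects as well as the solutions" advertised in the abstract, and I would lean on the Gubinelli--Hofmanova a priori estimate scheme \cite{GH18} adapted to the coupled system, together with the mean-field observation that $\frac{\l}{N}\sum_j \Phi_j^2$ concentrates (for $m$ large / $\l$ small, as in Theorem \ref{th:1.2}) around a deterministic, finite, renormalized constant. Given these, I would control the cross term $\frac{2}{\sqrt N}\sum_i \<1>_i \Phi_i^{\sharp}$ by pairing the negative regularity of $\<1>_i \in \bC^{-1/2-\kappa}$ with the positive regularity of $\Phi_i^{\sharp}$ (using a paraproduct/resonant-product estimate so the product lands in some $\bC^{-1/2-\kappa}$, hence in $B^{-1-\kappa}_{1,1}$), and then estimate the $L^1_x$ norm of the $N$-average by Cauchy--Schwarz in $i$ plus the uniform moment bounds — crucially the $1/\sqrt N$ is compensated only after using that the summands have enough decorrelation, or more simply by bounding $\mE\|\frac{1}{\sqrt N}\sum_i \<1>_i\Phi_i^\sharp\|_{B^{-1-\kappa}_{1,1}} \le \frac{1}{\sqrt N}\sum_i \mE\|\<1>_i\Phi_i^\sharp\|_{B^{-1-\kappa}_{1,1}} = O(\sqrt N)$, which is \emph{not} good enough, so one instead bounds the $L^2(\mP)$ norm and expands the square, where the diagonal gives $O(1)$ and the off-diagonal terms must be shown small using independence structure of the noises and the mean-field decoupling. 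The quadratic-in-$\Phi^\sharp$ term is handled the same way, and is easier because $\Phi_i^\sharp$ lives in a space of positive regularity so $(\Phi_i^\sharp)^2$ needs no renormalization and Besov multiplication is classical.

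The main obstacle I anticipate is obtaining the \emph{uniform-in-$N$} a priori moment bounds on $\Phi_i^\sharp$ (equivalently on $\Phi_i$) under the stationary measure: the nonlinearity couples all $N$ components through $\frac{\l}{N}\sum_j \Phi_j^2$, so one cannot treat a single component in isolation, and one must simultaneously run the Gubinelli--Hofmanova global-existence/coming-down-from-infinity estimates for the whole system while tracking that all the "bad" terms carrying powers of $N$ cancel against the $1/N$ prefactor. This requires a careful bookkeeping of the stochastic objects (the trees $\<1>, \<2>, \<3>, \<30>$ etc.\ appearing in the $\Phi^4_3$ expansion) in their $N$-decorated versions, showing each renormalized diagram is $O(N^{\le 0})$ — exactly the heuristic recalled in Section~\ref{sec:heu} — and then feeding these into the energy estimate to get $\sup_N \mE\|\Phi_i^\sharp\|^p < \infty$. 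Once that uniform bound is in hand, the tightness in $B^{-1-\kappa}_{1,1}$ follows from a compact embedding $B^{-1-\kappa/2}_{1,1}\hookrightarrow B^{-1-\kappa}_{1,1}$ together with the uniform second-moment bound in the slightly better space, via Chebyshev and Prokhorov.
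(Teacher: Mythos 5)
Your overall strategy---use stationarity, decompose $\Phi_i$, bound each piece in a negative Besov norm uniformly in $N$, then conclude tightness by compactness---is indeed the skeleton of the paper's proof. However, there are two concrete gaps that would stop your argument from closing.

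First, the two-term Da Prato--Debussche split $\Phi_i = \<1>_i + \Phi_i^\sharp$ is too coarse in $d=3$. The paper needs the three-term decomposition $\Phi_i = Z_i + X_i + Y_i$, where $X_i$ (defined via the fixed point \eqref{eq:211}) captures the second-most irregular part at regularity $\bC^{1/2-\kappa}$ and $Y_i$ lives in $H^{1-2\kappa}$. In your notation $\Phi_i^\sharp = X_i + Y_i$, but then the resonant product $\<1>_i \circ \Phi_i^\sharp$ contains $Z_i \circ X_i$, whose Besov indices sum to $-2\kappa<0$; this is not classically defined and requires the renormalized object $\tilde\cZ^{\<31>}$ from \eqref{sto3}. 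Your proposal nowhere introduces this renormalization, so the cross term you describe as ``pairing negative regularity against positive regularity'' is not actually meaningful at the stated regularities without the finer split.

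Second---and this is the more serious gap---your plan to recover the factor $1/\sqrt N$ by ``showing off-diagonal terms are small using independence structure'' does not apply to the $\Phi^\sharp$-only (or $Y$-only) contributions. The $Y_i$ are all solutions of a system coupled through $\frac{\l}{N}\sum_j \Phi_j^2$, and are emphatically \emph{not} independent or even weakly decorrelated across $i$; an $L^2(\mathbf P)$ expansion of $\frac{1}{\sqrt N}\sum_i Y_i^2$ would produce $O(N^2)$ cross terms that you cannot control this way. The paper's actual mechanism for these terms is the collective energy estimate in Theorem~\ref{Y:T4} (and Lemma~\ref{lem:o1}): it shows $\int_0^T \E\sum_{i=1}^N \|Y_i\|_{H^{1-2\kappa}}^2\,\dif s \lesssim 1$ \emph{without any division by $N$}, i.e.\ the sum is already $O(1)$, not $O(N)$. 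Given that, the quadratic and cross terms are handled by a bare Cauchy--Schwarz with no probabilistic cancellation: e.g.\ $\E\|\frac{1}{\sqrt N}\sum_i Y_iZ_i\| \lesssim \frac{1}{\sqrt N}(\E\sum_i\|Y_i\|^2)^{1/2}(\E\sum_i\|Z_i\|^2)^{1/2}\lesssim \frac{1}{\sqrt N}\cdot 1\cdot\sqrt N=1$, and $\E\|\frac{1}{\sqrt N}\sum_i Y_i^2\| \lesssim \frac{1}{\sqrt N}\E\sum_i\|Y_i\|^2\lesssim N^{-1/2}$. Decorrelation from independent noises \emph{is} used, but only for the purely Gaussian piece $\frac{1}{\sqrt N}\sum_i\cZ^{\<2>}_{ii}$ and for $\sum_i\|X_i\|^2$ (via Lemma~\ref{lem:sto}), where it is legitimate. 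Without the $\sum_i\|Y_i\|^2\lesssim1$ estimate your argument cannot be made to close, and your appeal to the mean-field concentration of $\frac{\l}{N}\sum_j\Phi_j^2$ is orthogonal to what is actually needed.
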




The proofs of Theorem~\ref{th:1.2} (split into Theorems~\ref{th:con} and \ref{th:main})
and Theorem~\ref{th:1.3} will be given in
Section~\ref{sec:inv}. 

Let us briefly mention some of the earlier related results 
on large N problems in QFT, and 
we refer to \cite[Sec.~1.1 and 1.2]{SSZZ20} for a more complete
historical remarks and literature
on the physical and mathematical results.

On a  {\it fixed} lattice,
Kupiainen \cite{MR574175} studied the $1/N$ expansion 
for the nonlinear sigma (classical Heisenberg) model, see also \cite{MR678004},
and more recently, 
Chatterjee and Jafarov 
 \cite{MR3919447,chatterjee2016} 
obtained the $1/N$ expansion for lattice gauge theories.
The lattice cutoff allows one to 
avoid the ultraviolet or singularity problem,
which is  convenient especially when the continuum limit of the model is not constructed or may not exist.
On the other hand, 
L\'evy \cite{Levy2011} and  Anshelevich--Sengupta \cite{MR2864481}
constructed the so called master field for the two-dimensional Yang--Mills model in continuum
which is the large $N$ limit of 
 Wilson loop observables.
While the two-dimensional Yang--Mills field 
 in continuum is very singular (and the random connection field was only constructed recently by Cheryvev \cite{MR4034782}),
the model in $d=2$ has a certain solvability property, so these large $N$ results 
are very difficult to be generalized to 3D continuum space, if possible, since the solvability would be lost.
Back to the vector $\Phi^4$ or $O(N)$ linear sigma model in continuum,
the first large $N$ result was achieved by Kupiainen \cite{MR578040} (see also the review \cite{MR582622})
in which $1/N$ expansion for the
pressure (vacuum energy) of the model  \eqref{e:Phi_i-measure} in $d=2$ was obtained
using constructive field theory methods. See also  \cite{MR661137}.
Generalizing the results in  \cite{MR578040} to 3D using constructive field theory methods
would be very interesting but, as far as we understand, might be difficult.

To our best knowledge nothing was known in a very singular setting
such as the model being considered here in $d=3$,
which is now analyzable
 thanks to the new SPDE methods developed more recently.
See
 Section~\ref{sec:future} for more discussion.

\subsection{Methodology and difficulties}
To obtain our main results, we outline the main steps of our strategy. To prove tightness of $\nu^{N,i}$ and the observables we turn to uniform in $N$ moment estimates for the stationary solutions $\Phi$ to the stochastic quantization \eqref{eq:main}. 
Here $\Phi$ is only a distribution and \eqref{eq:main} requires renormalization.
Our general strategy is to apply  SPDE techniques, especially {\it energy method} to derive uniform in $N$ estimates of $\Phi$. Due to the singularity of the noise $\xi$ and also $\Phi$, we first decompose
	\begin{align}\label{eq:dec}
		\Phi_i=Z_i+X_i+Y_i,
	\end{align}
	to isolate parts of different regularities. 
Here $Z_i$ is the stationary solution to the linear equation
 $\sL Z_i=\xi_i$ and is the most irregular part in $\Phi_i$, and $X_i$ involves the second irregular part $\tilde\cZ^{\<30>}$ 
 (see Section \ref{sec:ren} for the definition of $\tilde\cZ^{\<30>}$ and \eqref{eq:211} for the definition of $X_i$).

With \eqref{eq:dec} at hand, $Y_i$ becomes more regular and we  prove uniform in $N$ energy estimate for $Y_i$, 
which solves an equation driven by $X$ and renormalized powers of $Z$ (see \eqref{eq:22} below).
As in the 2D case, the extra factor $1/N$ before the nonlinear terms makes the damping effect from $Y_j^2Y_i$ weaker as $N$ becomes large, so we cannot exploit the strong damping effect at the level of a fixed component of $Y$. 
Instead we  consider aggregate quantities, and ultimately we focus on the empirical average of the $L^2$-norms. 
The advantage  
 is that the dissipation term $\|\frac{1}{N}\sum_{i=1}^N Y_i^2\|_{L^2}^2$ from $Y_j^2Y_i$ (see Section~\ref{sec:dec}) can be used to control the nonlinear terms.  
 We then show that the following averaged quantities
	\begin{align}\label{eq:YN}
		\frac1N\sum_{j=1}^N\|Y_j(t)\|_{L^2}^2+\frac1N\sum_{j=1}^N\|Y_j\|_{L_T^2H^{1-2\kappa}}+\frac{\lambda}{N^2}\Big\|\sum_{i=1}^NY_i^2\Big\|_{L_T^2L^2}
	\end{align}
are controlled pathwise in terms of the averages of the renormalized powers of $Z$ (see Theorem \ref{Y:T4a}), which is enough to derive  tightness of $\nu^{N,i}$ in Theorem \ref{th:1.2}.
Furthermore, we derive uniform in $N$ bounds for   the quantities in \eqref{eq:YN} multiplied by $N$, i.e. the sum of $L^2$-norm of $Y_i$  (see Theorem \ref{Y:T4}).

We also remark that in order to establish uniform in $N$ bounds,
it would be a natural attempt to exploit the method in \cite{MW18} or \cite{GubCMP19}
(which obtained global a priori estimates for dynamical $\Phi^4_3$).
However, it is not clear  how to 
 exploit the aforementioned dissipation effect to deduce uniform in $N$ bounds with $L^p$ ($p>2$) estimates developed in \cite{MW18}; also  it seems difficult to apply the maximum principle used in \cite{GubCMP19, moinat2020space} to the vector valued case.
 In this paper, we exploit the approach developed in \cite{GH18} (which relies on interesting cancellations)
  to establish $L^2$ uniform  bounds;
 due to the  cancellation
certain higher regularity estimate which would be rather technical
 is not required anymore for $L^2$ uniform estimates (see Section \ref{sec:dec} for more details).
 On the other hand, compared to the dynamical $\Phi^4_3$ model, the dissipation effect from the term $\|\frac{1}{N}\sum_{i=1}^N Y_i^2\|_{L^2}^2$  is weaker than the $L^4$ norm. We will perform further decomposition and choose suitable parameter to balance the competing contributions for the estimate of the cubic term (see Lemma \ref{Cubic} and discussion before it for more details.)

An important ingredient in our proof is a cancelation mechanism
which emerges from Section~\ref{sec:unis}
and arises from the following reasons. In our decomposition of the solution to \eqref{eq:main},
the leading order terms consist of (polynomials of) $N$ independent Gaussian processes;
so when calculating moments of certain sums of these leading order terms in a suitably chosen Hilbert space,
many terms do not contribute, which allows us to gain ``factors of $1/N$''.
See Lemma~\ref{lem:sto} (and also Lemmas~\ref{lem:X2}-\ref{lem:X3}) for these effects.

As a final step, as in the 2D case, we follow the idea in \cite{GH18} and construct a jointly stationary process $(\Phi,Z)$ whose components satisfy \eqref{eq:main} and the linear equation \eqref{eq:li1} respectively. The law of $Z$ is the Gaussian free field $\nu$
and the law of $(\Phi,Z)$ gives a coupling between the measures $\nu^N$ and $\nu$.  We use this coupling to prove convergence of $\nu^{N,i}$ to $\nu$ by invoking the uniform in $N$ estimates on the stationary processes. 
More precisely, we further use mean field limit techniques to center the stochastic objects (i.e. $R_N^1, R_N^2$ and $Q_N^3$ in Theorem \ref{Y:T4}) in our uniform estimates; their means will contribute to a mass term, which  ultimately requires the assumptions on $m,\l$. 
The part involving the centered stochastic objects
could be absorbed by the dissipation terms from nonlinearity;
and we will gain a factor of $1/N$ when computing variance of 
the centered stochastic objects (essentially a generalization of the elementary fact that the variance of the average of $N$ mean-zero i.i.d. random variables is $O(1/N)$).
Finally tightness of observables almost immediately follows
	from the uniform in $N$ estimates in Theorem \ref{th:main}.

\subsection{Background and heuristics.}
\label{sec:heu}

We review here how
physicists (e.g. Wilson \cite{wilson1973quantum}) predicted convergence of the model
\eqref{e:Phi_i-measure} (after inserting suitable renormalization constants in a way that corresponds to \eqref{eq:ap})
to GFF as $N\to \infty$, and why their heuristic argument was far from being rigorous.
Since this is completely formal, we simply drop the renormalization and pretend that
our fields can be evaluated at spatial points.

The prediction was based on  viewing \eqref{e:Phi_i-measure}
as a perturbation of GFF, namely, to formally Taylor expand \eqref{e:Phi_i-measure} in $\lambda$.
As an example, one can calculate the two-point correlation $\E[\Phi_i(y_1)\Phi_i(y_2)]$ (for a fixed $i$)
in this way; the zeroth order term  in this Taylor expansion is obtained by simply taking $\l=0$,
which gives $\E[Z_i(y_1)Z_i(y_2)]$, where $Z_j \sim \mathcal{N}(0,\frac12(m-\Delta)^{-1})$.
Each of the higher order terms in this Taylor expansion can be calculated as an expectation of a product of Gaussians using Wick theorem:
for instance at order $\lambda^2$ {\it one of} the terms has the form
$$
\frac{\l^2}{N^2} \int \E \Big[ Z_i(y_1)Z_i(y_2)
\Big(\!\!\! \sum_{i_1,j_1=1}^N  \!\!\! Z_{i_1} (x_1)^2 Z_{j_1} (x_1)^2 \Big)
\Big(\!\!\! \sum_{i_2,j_2=1}^N \!\!\! Z_{i_2} (x_2)^2 Z_{j_2} (x_2)^2 \Big) \Big] \dif x_1\dif x_2.
$$
 {\it One of} the terms  obtained from applying Wick theorem to the above expectation has the form
\begin{equs}
{}  \frac{\l^2}{N^2}  \!\!\!\!
 \sum_{i_1,j_1,i_2,j_2=1}^N \!\!\!\!\!\!\!
\delta_{i,i_1}\delta_{i,i_2} \delta_{i_1,i_2} \delta_{j_1,j_2}
& \int
 \E[ Z_i(y_1)Z_{i_1} (x_1)] \E[ Z_i(y_2)Z_{i_2} (x_2)]
\\
&\times
\E[ Z_{i_1} (x_1) Z_{i_2} (x_2)]
\E[Z_{j_1} (x_1)Z_{j_2} (x_2)]^2
 \dif x_1\dif x_2
\end{equs}
where the Kronecker $\delta$'s come from
the independence of $(Z_j)_{j=1}^N$.  (This integral is convergent after introducing the renormalization constant $\tilde b_\eps$ as in  \eqref{eq:ap}.)
Thanks to these Kronecker $\delta$'s, the summation has $N$ terms (rather than $N^4$ terms),
so the above expression is of order $1/N$ and thus converge to $0$ as $N\to \infty$.
This argument is far from being rigorous because
even if it could be shown that  every order of the Taylor expansion (except for the zeroth order)
 converges to $0$ as $N\to \infty$,
 and even if this could be done for higher order correlations,
 the above Taylor expansion in $\lambda$ does not converge.
\footnote{For $\Phi^4_2$, the expansion in $\lambda$ was proved to be divergent in \cite{JaffeDivergence},
but proved to be asymptotic in  \cite{dimock1974,perturbation}.}
This type of predictions can be rigorously proved
now thanks to the  recent development of
singular SPDE techniques and the methods developed in this paper.
On the other hand, as we will see,
some of our calculations such as Lemma~\ref{lem:sto}
have a similar flavor with the formal perturbative arguments
discussed above.

\subsection{Future directions}
\label{sec:future}

We mention a few possible future directions.
The proof of tightness in Theorem~\ref{th:1.3} requires
establishing uniform bounds on  moment of the observables.
It will be more interesting to prove
exact formulae of correlations of the observables (as done in \cite{SSZZ20} in $d=2$) in the $N\to\infty$ limit, by first using integration by parts (i.e. Dyson--Schwinger equations)
to find the leading contribution to the formula and then establishing uniform bounds which show that the remainder terms vanish.
In 3D it would require more effort to interpret the Dyson--Schwinger equations
and prove the bounds for the remainder, so we leave it to future work. 
We  hope that the SPDE approach and uniform estimates derived in Section \ref{sec:uni} can be used to characterize the tightness limit
of $O(N)$ invariant observables, 
 prove  exact correlation formulas in the large $N$ limit or systematic $1/N$ expansions, 
for which we may need to combine with the methods from QFT, for instance, 
the dual field representation rigorously studied in \cite{MR578040}.

Another interesting question is whether our convergence results hold for a larger range of $(m,\l)$, or even over the entire $(m,\l)\in \R^2_+$, at least on the torus.
It would be interesting to generalize some of our results to infinite volume setting by introducing suitable weights. 
 Let us also mention that proving or disproving phase transitions for $N$ large is an important question,
see for instance \cite{MR3602816} for a different but related model and references therein,
and we mention \cite{CGW2020} on phase transition of $\Phi^4_3$ using stochastic analysis methods.

Finally, as discussed in Section~\ref{sec:main-res}, it would certainly be interesting to investigate large $N$ problems for more challenging models without lattice cutoff, such as Lie algebra valued
\cite{CCHS20, CCHS22} or manifold valued  \cite{hairer2019geo,hairer2016motion, RWZZ17, CWZZ18} models with large dimension of target space via stochastic quantization.

\subsection{Structure of the paper}
This paper is organized as follows.
In Section~\ref{sec:Sto}, we set up the decomposition of our SPDE system, discuss renormalization,
derive the energy balance identity, and prove uniform in $N$ bounds for stochastic terms.
Section~\ref{sec:uni} is devoted to the proof of  uniform in $N$
energy estimates for the solution.

Section \ref{sec:inv} is concerned with the proof of Theorem \ref{th:1.2} and Theorem \ref{th:1.3}.
The convergence of invariant measures from $\nu^{N,i}$ to the Gaussian free field $\nu$ is shown in Section \ref{sec:Con}. 
Section~\ref{sec:Ob} is devoted to the study of the observables and the proof of Theorem \ref{th:1.3}. In Appendix~\ref{sec:pre}
we introduce the basic notations and recall useful tools such as paraproducts
and commutator estimates used throughout the paper. In Appendix \ref{sec:extra} we give an extra estimate and finally we collect some notation  in Appendix \ref{sec:extra1}.

\subsection{Some notations.}\label{sec:nota}
 Throughout the paper, we use the notation $a\lesssim b$ if there exists a proportional constant $c>0$ such that $a\leq cb$, and we write $a\simeq b$ if $a\lesssim b$ and $b\lesssim a$. 
We say that $a\lesssim b$ uniformly in a parameter  if the proportional constant $c$ does not depend on this parameter.

Given a Banach space $E$ with a norm $\|\cdot\|_E$ and fixing $T>0$, we write $C_TE=C([0,T];E)$ for the space of continuous functions from $[0,T]$ to $E$, equipped with the supremum norm $\|f\|_{C_TE}=\sup_{t\in[0,T]}\|f(t)\|_{E}$.  For $p\in [1,\infty]$ we write $L^p_TE=L^p([0,T];E)$ for the space of $L^p$-integrable functions from $[0,T]$ to $E$, equipped with the usual $L^p$-norm.
Let $H$ be a separable Hilbert space with  norm $\|\cdot \|_H$ and inner product $\la\cdot,\cdot\ra_H$. Given $p>1$, $\alpha\in (0,1)$, let $W^{\alpha,p}_TH$ be the Sobolev space of all $f\in L^p_TH$ such that 
 the following norm is finite
$$\|f\|_{W^{\alpha,p}_TH}^p\eqdef\int_0^T\|f(t)\|_H^p\dif t+\int_0^T\int_0^T\frac{\|f(t)-f(s)\|_H^p}{|t-s|^{1+\alpha p}}\dif t\dif s.$$
Let $\mathcal{S}'$ be the space of distributions on $\mathbb{T}^d$. 
We write $B^\alpha_{p,q}$ for Besov spaces on the torus with general indices $\alpha\in \R$, $p,q\in[1,\infty]$
and the H\"{o}lder--Besov  space $\bC^\alpha$ is given by $\bC^\alpha=B^\alpha_{\infty,\infty}$.  We will often write $\|\cdot\|_{\bC^\alpha}$ instead of $\|\cdot\|_{B^\alpha_{\infty,\infty}}$. For $\alpha\in\R$, set $H^\alpha=B^\alpha_{2,2}$. Set $\Lambda=(1-\Delta)^{\frac12}$. We put the definition of Besov spaces and useful lemmas in Appendix \ref{sec:pre}.


\subsection*{Acknowledgments}
We would like to thank Scott Smith
for the numerous and very helpful discussions on mean field limits and large $N$ problems of singular SPDEs.
 H.S. gratefully acknowledges financial support from NSF grants
 DMS-1712684 / 1909525,  DMS-1954091 and CAREER DMS-2044415.
R.Z. is grateful to the nancial supports of the NSFC (No. 11922103).
X.Z. is grateful to the nancial supports in part by National Key R\&D Program of China (No. 2020YFA0712700)
and the NSFC (No. 12288201, 12090014) and the support by key Lab of Random Complex Structures and Data
Science, Youth Innovation Promotion Association (2020003), Chinese Academy of Science.

\section{Stochastic terms and decomposition of the equation}
\label{sec:Sto}
{Due to the singular noise, the solutions to \eqref{eq:main} only belong to the negative order Besov-H\"older space and we need renormalization and 
decomposition to understand the nonlinearity. In this section, we first introduce the renormalizations and the relevant  stochastic terms for the decomposition of equation \eqref{eq:main}. We then derive uniform in $N$ estimates for the stochastic terms in 
Section~\ref{sec:sto} and Section~\ref{sec:unis}. In Section \ref{sec:dec} we give the decomposition of equation \eqref{eq:main}, which  gives the important cancellations for the nonlinearity.}

 We start with the  renormalized version of equation \eqref{eq:main}. Let $\xi_{i,\varepsilon}$ be a space-time mollification of $\xi_i$ defined on $\R\times \mathbb{T}^3$.
The formal equation \eqref{eq:main} is then interpreted as the limit of the following approximate equation
\begin{align}\label{eq:ap}
\LL \Phi_{i,\eps}+\frac{\l}{N}\sum_{j=1}^N\Phi_{j,\eps}^2\Phi_{i,\eps}
+\Big(-\frac{N+2}{N}\l a_\eps+\frac{3(N+2)}{N^2}\l^2 \tilde{b}_\eps\Big)\Phi_{i,\eps}=\xi_{i,\eps},
\end{align}
where $a_\eps$ and $\tilde{b}_\eps$ are renormalization constants given below.
Let  $Z_i$ be a stationary solution to
\begin{equation}\label{eq:li1}\LL Z_i=\xi_i.\end{equation}

\begin{remark}
To briefly motivate the $N$ dependent coefficients in the renormalization constants in \eqref{eq:ap}, note that we can ``recombine'' the terms as
$$\frac{\l}{N}\Big(\Phi_{i,\eps}^3 - 3a_\eps  \Phi_{i,\eps} +
\sum_{j\neq i} \Big(\Phi_{j,\eps}^2\Phi_{i,\eps}
-a_\eps  \Phi_{i,\eps}\Big)\Big)
$$
and this is then consistent with our renormalization \eqref{e:wick-tilde} below.
The  $N$ dependance of the coefficient in front of $\tilde{b}_\eps$ will also be clear
by similar consideration based on renormalization computation in paracontrolled calculus
 in Section~\ref{sec:ren} below.
\end{remark}

\subsection{Renormalization}\label{sec:ren}

We now introduce the renormalized terms.
Let $Z_{i,\varepsilon}$ be the stationary solution to $\LL {Z}_{i,\varepsilon}=\xi_{i,\varepsilon}$.
For convenience, we assume that all the noises are mollified with a common bump function. In particular, $ Z_{i,\eps}$ are i.i.d. mean zero Gaussian.
The Wick products are defined as
\begin{equ}[e:wick-tilde]
\cZ_{ij}^{\<2>}=
\begin{cases}
\lim\limits_{\varepsilon\to0}(Z_{i,\varepsilon}^2-a_\varepsilon)  &  (i=j)\\
 \lim\limits_{\varepsilon\to0} Z_{i,\varepsilon}Z_{j,\varepsilon} & (i\neq j)
\end{cases}
\qquad
\cZ_{ijj,\eps}^{\<3>} =
\begin{cases}
Z_{i,\varepsilon}^3-3a_\varepsilon Z_{i,\varepsilon}   & (i=j)\\
Z_{i,\varepsilon}Z_{j,\varepsilon}^2-a_\varepsilon Z_{i,\varepsilon} & (i\neq j)
\end{cases}
\end{equ}
 where $a_\varepsilon=\mathbf{E}[ Z_{i,\varepsilon}^2(0,0)]$ is a divergent constant and is independent of $i$.
 The limit in the definition of $\cZ_{ij}^{\<2>}$ is
  in $C_T\bC^{-1-\kappa}$ for $\kappa>0$ (see \cite[Section~10]{Hairer14} or \cite{MR3746744} for more details). Here and in the following $T\in(0,\infty)$ denotes an arbitrary finite time.
Let $\tilde{\cZ}_{ijj,\eps}^{\<30>}$ be the stationary solution to $\LL \tilde{\cZ}_{ijj,\eps}^{\<30>}=\cZ_{ijj,\eps}^{\<3>}$, i.e.  \footnote{Recall that $\cI$ is defined similarly but with $\int_{-\infty}^t$  replaced by $\int_0^t$.}
$$\tilde{\cZ}_{ijj,\eps}^{\<30>}=\int_{-\infty}^tP_{t-s}\cZ_{ijj,\eps}^{\<3>}(s)\dif s:=\tilde{\cI}\cZ_{ijj,\eps}^{\<3>}.$$
Set
$\tilde{\cZ}_{ijj}^{\<30>}=\lim\limits_{\varepsilon\to0}\tilde{\cZ}_{ijj,\eps}^{\<30>}$,
where the limit is in  $C_T\bC^{\frac12-\kappa}$ for $\kappa>0$.
Here in the ``tree'' type superscripts, a dot denotes a noise, a line denotes a heat kernel, and the
subscripts specify the component indices of the noises showing in the trees.
In particular we can write
 $\tilde{\cZ}_{ijj}^{\<30>}=\tilde{\cZ}_{jij}^{\<30>}=\tilde{\cZ}_{jji}^{\<30>}$.

To introduce further stochastic objects,
let $\sD \eqdef m-\Delta$ and define
\begin{equs}[2]
\tilde{\cZ}_{ij,k\ell,\eps}^{\<22>} &\eqdef
\sD^{-1}(\cZ_{ij,\eps}^{\<2>})\circ \cZ_{k\ell,\eps}^{\<2>}- c_1 \tilde{b}_\eps \;,
\qquad
& {\cZ}_{ij,k\ell,\eps}^{\<22>} \eqdef
\cI(\cZ_{ij,\eps}^{\<2>})\circ \cZ_{k\ell,\eps}^{\<2>}- c_1 b_\eps(t) \;,
\\
\tilde{\cZ}_{ijj,k,\eps}^{\<31>} &\eqdef \tilde{\cZ}_{ijj,\eps}^{\<30>}\circ Z_{k,\eps} \;,
\qquad
&\tilde{\cZ}_{ijj,ik,\eps}^{\<32>}\eqdef
\tilde{\cZ}_{ijj,\eps}^{\<30>}\circ \cZ_{ik,\eps}^{\<2>}- c_2 \tilde{b}_\eps Z_{j,\eps}\;.
\end{equs}
Here $c_1$ equals $\frac12$ if $i=k\neq j=\ell$  or $i=\ell\neq j=k$,
equals $1$ if $i=k= j=\ell$, and is $0$ otherwise;
and $c_2 $ equals $1$ if $j=k\neq i$, equals $3$ if $j=k= i$, and equals $0$ otherwise.
Also, $b_{\eps}(t)=\E[\cI(\cZ_{ii,\eps}^{\<2>})\circ \cZ_{ii,\eps}^{\<2>}]$ and  $\tilde{b}_{\eps}=\E[\sD^{-1}(\cZ_{ii,\eps}^{\<2>})\circ \cZ_{ii,\eps}^{\<2>}]$ are  renormalization constants,
and one has $|b_\eps-\tilde{b}_\eps|\lesssim  t^{-\gamma}$  for any $\gamma>0$ uniformly in $\eps$.
We denote collectively
$$\mathbb{Z}_\eps\eqdef(Z_{i,\eps},\cZ_{ij,\eps}^{\<2>},\tilde{\cZ}_{ijj,\eps}^{\<30>}, \tilde{\cZ}_{ijj,k,\eps}^{\<31>}, \tilde{\cZ}_{ij,k\ell,\eps}^{\<22>}, {\cZ}_{ij,k\ell,\eps}^{\<22>}, \tilde{\cZ}_{ijj,ik,\eps}^{\<32>}).$$

To state the next lemma,
for $\kappa>0$ we define the ``homogeneities'' $\alpha_\tau \in \mathbb{R}$
as in the following table
\renewcommand{\arraystretch}{1.8}
\begin{align*} \begin{array}{|c c c c c c c c|}
     \hline
     \tau &  Z_{i,\eps}&
     \cZ_{ij,\eps}^{\<2>} &\tilde{\cZ}_{ijj,\eps}^{\<30>} &\tilde{\cZ}_{ijj,k,\eps}^{\<31>}&\tilde{\cZ}_{ij,k\ell,\eps}^{\<22>}&{\cZ}_{ij,k\ell,\eps}^{\<22>}&\tilde{\cZ}_{ijj,ik,\eps}^{\<32>}
     \\
     \hline
    \alpha_\tau &  - \frac{1}{2}-\kappa & - 1-\kappa &  \frac{1}{2}-\kappa & -\kappa&-\kappa & -\kappa& - \frac{1}{2}-\kappa\\
     \hline
   \end{array} \end{align*}

 \bl\label{thm:renorm43}
  For every $\kappa,\sigma>0$ and some $0<\delta<{1/2}$,  there exist random distributions
 \begin{align}\label{e:defZ}
 \mathbb{Z}\eqdef(Z_{i},\cZ_{ij}^{\<2>},\tilde{\cZ}_{ijj}^{\<30>}, \tilde{\cZ}_{ijj,k}^{\<31>}, \tilde{\cZ}_{ij,k\ell}^{\<22>}, {\cZ}_{ij,k\ell}^{\<22>}, \tilde{\cZ}_{ijj,ik}^{\<32>})
 \end{align}
 such that
 if $\tau_{\varepsilon}$ is a component in $\mathbb{Z}_\varepsilon$
 and $\tau$ is
the corresponding component  in $\mathbb{Z}$  then
$\tau_{\varepsilon }\rightarrow \tau$ in $ C_T\bC^{\alpha_{\tau}}\cap C_T^{\delta/2}\CC^{\alpha_{\tau}-\delta}$ a.s. as $\varepsilon\to0$.
Furthermore, for every $p>1$
\begin{align*}
\sup_{m\geq1}\E\|\tau_\eps\|_{C_T\bC^{\alpha_{\tau}}}^p+\sup_{m\geq1}\E\|\tau_\eps\|_{C_T^{\delta/2}\CC^{\alpha_{\tau}-\delta}}^p
&\lesssim1,
\\
\sup_{m\geq1}\E\|\tau\|_{C_T\bC^{\alpha_{\tau}}}^p+\sup_{m\geq1}\E\|\tau\|_{C_T^{\delta/2}\CC^{\alpha_{\tau}-\delta}}^p
&\lesssim1,
\end{align*}
 where the proportional constants in the inequalities are independent of $\eps, i, j, N$.
\el

We emphasize that Wick renormalization is used for the terms in \eqref{e:wick-tilde} as in the 2D setting. But for the convergence of the higher order stochastic objects, we need to use the renormalization procedure  in \cite[Section 10]{Hairer14} or \cite{MR3846835,MR3746744}. We also refer the proof of  Lemma \ref{thm:renorm43} to the above references.

 {\bf Convention.}
In the following if we do not need the precise powers of the renormalized terms in $\mathbb{Z}$, we denote by $Q(\mathbb{Z})$ a generic polynomial in terms of the above norms of $\tau$ with $\E Q(\mathbb{Z})^q\lesssim 1$ for any $q\geq1$. We note that $Q(\mathbb{Z})$ may depend on $N$ and summations from $1$ to $N$, for instance, it could take the form  $\frac1N \sum_{i=1}^N\tau_i$,  but the expectation of $Q(\mathbb{Z})$ is uniformly bounded and independent of $N$.

By Lemma \ref{thm:renorm43} there exists a measurable  $\Omega_0\subset \Omega$ with $\mathbf{P}(\Omega_0)=1$ such that for $\omega\in \Omega_0$
$$\|\tau\|_{C_T\bC^{\alpha_{\tau}}}+\|\tau\|_{C_T^{\delta/2}\CC^{\alpha_{\tau}-\delta}}<\infty,$$
and
$\tau_{\varepsilon }\rightarrow \tau$ in $ C_T\bC^{\alpha_{\tau}}\cap C_T^{\delta/2}\CC^{\alpha_{\tau}-\delta}$  as $\varepsilon\to0$
for every $\tau\in \mathbb{Z}$. We fix such $\omega\in \Omega_0$ in the following. Now we fix $\kappa$ small enough and $\delta=1/2-2\kappa$ in the above estimate.

\subsection{Additional stochastic terms}\label{sec:sto}

Recall the Littlewood--Paley blocks $\Delta_j$
in Section~\ref{sec:pre}.  
Let
\begin{align}\label{zmm02}\UU_>\eqdef\sum_{j>L}\Delta_j,
\qquad \UU_\leq \eqdef \sum_{j\leq L}\Delta_j ,\end{align}
for some constant $L>0$ to be chosen below.
Note that the limit $\Phi_{i}$ of $\Phi_{i,\eps}$ has the same regularity as $Z_{i}$ which is $C_T\bC^{-\frac12-\kappa}$, and it would be natural to decompose
	$$\Phi_{i}=Z_{i}-\frac\lambda{N}\sum_{j=1}^N\tilde{\cZ}^{\<30>}_{ijj}+\zeta_{i}$$
	with $\zeta_i$ being a function of better regularity.
	 Writing down the dynamics for $\zeta_i$,
the most irregular terms would be the paraproduct $\cZ^{\<2>}_{ij}\succ (\Phi_k-Z_k)$,
which indicates that $\zeta_i \in\bC^{1-\kappa}$ since $\cZ^{\<2>}_{ij}\in \bC^{-1-\kappa}$ as in Lemma~\ref{thm:renorm43},
but then  in an energy estimate for $\zeta_{i}$, the term $\la\zeta_i,\cZ^{\<2>}_{ij}\succ (\Phi_k-Z_k)\ra$ cannot be controlled.

To overcome this difficulty, as in \cite{GH18} (for $N=1$ case) we introduce one more stochastic object:
\begin{align}\label{eq:211}
X_i=
-\frac{\l}{N}\sum_{j=1}^N\Big(
2\cI(X_j\prec \UU_> \cZ^{\<2>}_{ij})
+\cI(X_i\prec \UU_>  \cZ^{\<2>}_{jj})
+\tilde{\cZ}^{\<30>}_{ijj}\Big).
\end{align}
In the Section \ref{sec:dec}  we will see that after a further decomposition, $Y_i=\Phi_i-Z_i-X_i$ satisfies a suitable energy inequality.
For fixed $N$ and $L>0$,  fixed point argument and Lemma \ref{lem:para}  easily imply local well-posedness of  \eqref{eq:211} in $C_T\bC^{\frac12-2\kappa}$. Furthermore, by a suitable choice of $L$, we have the following uniform in $N$ estimates, which imply  global well-posedness.  Note that the right-hand sides of the following bounds only involve stochastic terms appearing in Lemma~\ref{thm:renorm43}.

\bl\label{lem:X}
There exists $L=L(\l, N)>0$ such that
\begin{equation}\label{bdx}
\frac{1}{N}\sum_{i=1}^N\|X_i\|_{C_T\bC^{\frac12-\kappa}}^2
\lesssim
\frac{\l^2}{N^2}\sum_{i,j=1}^N \|\tilde{\cZ}^{\<30>}_{ijj}\|^2_{C_T\bC^{\frac12-\kappa}},
\end{equation}
and
\begin{equs}
\|X_i\|_{C_T\bC^{\frac12-\kappa}}
&\lesssim R_N^0+\frac{\l}{N}\sum_{j=1}^N \|\tilde{\cZ}^{\<30>}_{ijj}\|_{C_T\bC^{\frac12-\kappa}},	\label{bdx1}
\\
\|X_i\|_{C^{\frac14-\kappa}_TL^\infty}
&\lesssim
R_N^0+\frac{\l}{N}\sum_{j=1}^N \|\tilde{\cZ}^{\<30>}_{ijj}\|_{C^{\frac14-\kappa}_TL^\infty}
+\frac{\l}{N}\sum_{j=1}^N \|\tilde{\cZ}^{\<30>}_{ijj}\|_{C_T\bC^{\frac12-\kappa}},	\label{bdx2}
\end{equs}
uniformly in $N$, $\lambda$ and $m\geq1$,
where
$$
R_N^0
\eqdef\frac{\l^2}{N^2}\sum_{k,j=1}^N \|\tilde{\cZ}^{\<30>}_{kjj}\|^2_{C_T\bC^{\frac12-\kappa}}
	+\frac{\l^2}{N}\sum_{j=1}^N\|\cZ^{\<2>}_{ij}\|_{C_T\bC^{-1-\kappa}}^2 .
$$
\el
%
%

 {\bf Convention.} Below we will often have expressions of the form $2A_j B_{i k l} + A_i B_{j k l}$.
To streamline our notation, we will write this as $A_{[j]} B_{[i] k l}$,
which stands for the same expression without the brackets multiplied by $2$, plus the expression with the two indices in brackets
swapped.
For instance, in \eqref{eq:211}, we can write
$$
\cI(X_{[j]}\prec \UU_> \cZ^{\<2>}_{[i]j})=
2\cI(X_j\prec \UU_> \cZ^{\<2>}_{ij})
+\cI(X_i\prec \UU_>  \cZ^{\<2>}_{jj})
$$
Note that when we prove inequalities later, often the factor $2$ does not matter since those inequalities will have an implicit proportional constant anyway.

\begin{proof}
The proof is motivated by \cite[Lemma~4.1]{GH18}.
By definitions of $\UU_>$ and Besov spaces,
\begin{align}\label{eq:uz}
\|\UU_>\cZ^{\<2>}_{ij}\|_{C_T\bC^{-3/2-\kappa}}\lesssim 2^{-L/2}\|\cZ^{\<2>}_{ij}\|_{C_T\bC^{-1-\kappa}}.
\end{align}
This combined with Schauder estimate Lemma \ref{lemma:sch} and Lemma \ref{lem:para} implies that
\begin{align}
\|X_i\|_{C_T\bC^{\frac12-\kappa}}
&\lesssim
2^{- L/2}\frac{\l}{N}
\sum_{j=1}^N
\Big(
\|X_{\qj}\|_{C_T\bC^{\frac12-\kappa}}\|\cZ^{\<2>}_{\qi j}\|_{C_T\bC^{-1-\kappa}}
\Big)
+\frac{\l}{N}\sum_{j=1}^N \|\tilde{\cZ}^{\<30>}_{ijj}\|_{C_T\bC^{\frac12-\kappa}}\nonumber
\\
&\lesssim 2^{-L/2}\Big(\frac{1}{N}\sum_{j=1}^N \|X_j\|^2_{C_T\bC^{\frac12-\kappa}}\Big)^{\frac12}\Big(\frac{\l^2}{N}\sum_{j=1}^N\|\cZ^{\<2>}_{ij}\|_{C_T\bC^{-1-\kappa}}^2\Big)^{\frac12}	\label{z1}
\\
&\qquad+2^{-L/2}\|X_i\|_{C_T\bC^{\frac12-\kappa}}\Big(\frac{\l}{N}\sum_{j=1}^N\|\cZ^{\<2>}_{jj}\|_{C_T\bC^{-1-\kappa}}\Big)
+\frac{\l}{N}\sum_{j=1}^N \|\tilde{\cZ}^{\<30>}_{ijj}\|_{C_T\bC^{\frac12-\kappa}}.
\nonumber
\end{align}
Taking  square  on  both sides and summing over $i$ then dividing by $N$  we deduce
\begin{align*}
\frac{1}{N}\sum_{i=1}^N\|X_i\|_{C_T\bC^{\frac12-\kappa}}^2
&\lesssim 2^{- L}\Big(\frac{1}{N}\sum_{j=1}^N \|X_j\|^2_{C_T\bC^{\frac12-\kappa}}\Big)\Big(\frac{\l^2}{N^2}\sum_{i,j=1}^N\|\cZ^{\<2>}_{ij}\|_{C_T\bC^{-1-\kappa}}^2\Big)
\\&+2^{- L}\Big(\frac{1}{N}\sum_{i=1}^N\|X_i\|^2_{C_T\bC^{\frac12-\kappa}}\Big)
\Big(\frac{\l}{N}\sum_{j=1}^N\|\cZ^{\<2>}_{jj}\|_{C_T\bC^{-1-\kappa}}\Big)^2
+\frac{\l^2}{N^2}\sum_{i,j=1}^N \|\tilde{\cZ}^{\<30>}_{ijj}\|^2_{C_T\bC^{\frac12-\kappa}}.
\end{align*}
Choosing $L=L(\lambda,N)$ such that
\begin{equation}\label{L}2^{ L}=2C^2\Big(\frac{\l^2}{N^2}\sum_{i,j=1}^N\|\cZ^{\<2>}_{ij}\|_{C_T\bC^{-1-\kappa}}^2\Big)
+2C^2\Big(\frac{\l}{N}\sum_{j=1}^N\|\cZ^{\<2>}_{jj}\|_{C_T\bC^{-1-\kappa}}\Big)^2+1,\end{equation}
with $C$ given as the maximum of proportional constant in \eqref{z1} and \eqref{eq:z2} below, we easily deduce \eqref{bdx}.  Using \eqref{z1} and \eqref{bdx}, \eqref{bdx1} follows.
\eqref{bdx2} follows from using Schauder estimate Lemma~\ref{lemma:sch}
  to bound $\|X_i\|_{C^{\frac14-\kappa}_TL^\infty}$ in the analogous way as \eqref{z1} and then applying \eqref{bdx} \eqref{bdx1} and \eqref{L}.
\end{proof}

By Lemma \ref{lem:X} we have $X_i\in C_T\bC^{\frac12-\kappa}$. We also introduce $X_{i,\eps}$ defined by \eqref{eq:211} with the elements in $\mathbb{Z}$ replaced by $\mathbb{Z}_\eps$. It is easy to deduce that
$$
X_{i,\eps}\to X_i \qquad\mbox{ in } C_T\bC^{\frac12-\kappa} \qquad P-\mbox{a.s.,}\qquad\mbox{as }\eps\to0.
$$
In our energy estimates below, we will have terms $X_j\circ \cZ^{\<2>}_{ij}$, $X_i\circ\cZ^{\<2>}_{jj}$ and $X_j\circ Z_i$ which are not classical well-defined.
We will use the renormalized terms introduced in Section \ref{sec:ren} to define them.
We first consider $X_j\circ \cZ^{\<2>}_{ij}$.
 For $i\neq j$,  note that  $2\cZ^{\<22>}_{jl,ij}$ requires renormalization when $i=l$, 
 and  $\cZ^{\<22>}_{ll,ij}$ does not. Hence, for $i\neq j$
 \begin{equs}   \label{sto1}
 X_j &\circ \cZ^{\<2>}_{ij}
 \eqdef  \lim_{\eps\to0}(X_{j,\eps}\circ \cZ^{\<2>}_{ij,\eps}+\frac{\l\tilde{b}_\eps}N(Z_{i,\eps}+X_{i,\eps}))
 \\
 &=-\frac{\l}{N} \sum_{l=1}^N
  \lim_{\eps\to0}
  \bigg[
  \tilde{\cZ}^{\<32>}_{llj,ij,\eps}
+\cI( X_{\ql,\eps}\prec \cZ^{\<2>}_{\qj l,\eps})\circ \cZ^{\<2>}_{ij,\eps}
-\cI(X_{\ql,\eps}\prec \UU_\leq(\cZ^{\<2>}_{\qj l,\eps})  )\circ \cZ^{\<2>}_{ij,\eps}-\frac{\tilde{b}_\eps}N X_{i,\eps}\bigg]
\\
&=-\frac{\l}{N} \sum_{l=1}^N \bigg[\tilde{\cZ}^{\<32>}_{llj,ij}
+X_{\ql}\cZ^{\<22>}_{\qj l,ij}
+\bar{C}(X_{\ql},\cZ^{\<2>}_{\qj l},\cZ^{\<2>}_{ij})
-\cI(2X_{\ql} \prec \UU_\leq\cZ^{\<2>}_{\qj l} )\circ \cZ^{\<2>}_{ij}\bigg]+\frac{\l}{N}(\tilde{b}-b(t))X_i
\end{equs}
with
  \begin{align}\label{c:b}
  \tilde{b}-b(t)=\lim_{\eps\to0}(\tilde{b}_\eps-b_\eps(t)),
 \qquad
  |\tilde{b}-b(t)|\lesssim t^{-\gamma}
  \end{align}
for $t>0$ and $\gamma>0$.
Here $\bar{C}$ is the commutator introduced in Lemma \ref{lem:com2}.
 Using
  Lemma \ref{lem:para}, Lemma \ref{lem:com2} and Lemma \ref{thm:renorm43} the limit in  \eqref{sto1} is understood in $C((0,T];\bC^{-\frac12-\kappa})$ $P$-a.s..
For  $i=j$, we have similar decomposition as in \eqref{sto1} with $\tilde{b}_\eps$ replaced by $3 \tilde{b}_\eps$,
with limits also in $C((0,T]; \bC^{-\frac12-\kappa})$ $P$-a.s..
($2\cZ^{\<22>}_{jl,ij}$ gives an extra $2\tilde{b}$,
 and $\cZ^{\<22>}_{ll,ij}$ gives an extra $\tilde{b}$, in the case $i=j=l$.)

Similarly  for $i\neq j$ we define, again with limit  in $C((0,T]; \bC^{-\frac12-\kappa})$ $P$-a.s.
 \begin{equs}\label{sto2}
{}& X_i   \circ \cZ^{\<2>}_{jj}
 \eqdef \lim_{\eps\to0}(X_{i,\eps}\circ \cZ^{\<2>}_{jj,\eps}+\frac{\l\tilde{b}_\eps}N(Z_{i,\eps}+X_{i,\eps}))
\\
& =-\frac{\l}{N} \sum_{l=1}^N \bigg[\tilde{\cZ}^{\<32>}_{ill,jj}
+X_{\ql}\cZ^{\<22>}_{\qi l,jj}
+\bar{C}(X_{\ql},\cZ^{\<2>}_{\qi l},\cZ^{\<2>}_{jj})
-\cI(  X_{\ql}\prec \UU_\leq\cZ^{\<2>}_{\qi l} )\circ \cZ^{\<2>}_{jj}\bigg]
+\frac{\l}{N}(\tilde{b}-b(t))X_i\;.
\end{equs}
%
Finally we define, with the limit taken in $C_T\bC^{-\kappa}$ $P$-a.s.
\begin{equ}\label{sto3}
X_j\circ Z_i
=\lim_{\eps\to0}(X_{j,\eps}\circ Z_{i,\eps})
=-\frac{\l}{N} \sum_{l=1}^N
\bigg[\tilde{\cZ}^{\<31>}_{llj,i}
+\cI(X_{\ql}\prec \UU_>\cZ^{\<2>}_{\qj l})  \circ Z_i\bigg] \;.
\end{equ}

 In the following lemma we derive uniform bounds for the above stochastic terms. Regarding $X_jZ_i$ we have $X_jZ_i=X_j\prec Z_i+X_j\circ Z_i+X_j\succ Z_i$.

\bl\label{lem:X1} It holds that
\begin{equs}
\frac{1}{N^2}\sum_{i,j=1}^N\|X_jZ_i\|_{C_T\bC^{-\frac12-\kappa}}^2&\lesssim \l^2(1+\l^2) Q^{52}_N,
\\
\frac{1}{N}\sum_{j=1}^N\|X_jZ_j\|_{C_T\bC^{-\frac12-\kappa}}^2&\lesssim \l^2(1+\l^2) Q^{53}_N,
\end{equs}
where $Q^{52}_N, Q^{53}_N$ are given in the proof with $\E Q^{52}_N+\E Q^{53}_N\lesssim1$ 
uniformly in $N$ and $\l$.
\el

\begin{proof}
By \eqref{sto3} and Lemmas \ref{lemma:sch},  \ref{lem:para} we obtain
\begin{equ}
\|X_j\circ Z_i\|_{C_T\bC^{-\kappa}}
\lesssim
\frac{\l}{N} \sum_{l=1}^N \bigg[\|\tilde{\cZ}^{\<31>}_{llj,i}\|_{C_T\bC^{-\kappa}}
+\|X_{\ql}\|_{C_T\bC^{\frac12-\kappa}}\|\cZ^{\<2>}_{\qj l}\|_{C_T\bC^{-1-\kappa}}\| Z_i\|_{C_T\bC^{-\frac12-\kappa}}
\bigg]. \label{e:XcircZ}
\end{equ}
Moreover, by Lemma \ref{lem:para} we have
\begin{align*}
\|X_j\prec Z_i\|_{C_T\bC^{-\frac12-\kappa}}+\|X_j\succ Z_i\|_{C_T\bC^{-\frac12-\kappa}}\lesssim \|X_j\|_{C_T\bC^{\frac12-\kappa}}\|Z_i\|_{C_T\bC^{-\frac12-\kappa}}.
\end{align*}

By \eqref{bdx} we have
\begin{align*}
&\frac{1}{N^2}\sum_{i,j=1}^N\|X_jZ_i\|^2_{C_T\bC^{-\frac12-\kappa}}
\lesssim
\l^2(1+\l^2) \bigg[
\frac{1}{N^3}\sum_{i,j,l=1}^N\|Z_i\|_{C_T\bC^{-\frac{1}{2}-\kappa}}^2\|\tilde{\cZ}^{\<30>}_{ljj}\|_{C_T\bC^{\frac{1}{2}-2\kappa}}^2
\\&
+\frac{1}{N^3}\sum_{i,j,l=1}^N\|\tilde{\cZ}^{\<31>}_{llj,i}\|_{C_T\bC^{-\kappa}}^2
+\frac{1}{N^4}\sum_{i,j,k,l=1}^N\|\tilde{\cZ}^{\<30>}_{ljj}\|_{C_T\bC^{\frac{1}{2}-2\kappa}}^2\|\cZ^{\<2>}_{kk}\|_{C_T\bC^{-1-\kappa}}^2
\|Z_i\|_{C_T\bC^{-\frac{1}{2}-\kappa}}^2
\\&
+\frac{1}{N^5}\sum_{i,j,k,l,m=1}^N\|\tilde{\cZ}^{\<30>}_{ljj}\|_{C_T\bC^{\frac{1}{2}-2\kappa}}^2\|\cZ^{\<2>}_{km}\|_{C_T\bC^{-1-\kappa}}^2
\|Z_i\|_{C_T\bC^{-\frac{1}{2}-\kappa}}^2
\bigg]
\eqdef \l^2(1+\l^2)Q_N^{52},
\end{align*}
where the first line corresponds to $X_j\prec Z_i+X_j\succ Z_i$, and the second and third line are estimate for $X_j\circ Z_i$.

Moreover, we have the paraproduct decomposition
$X_jZ_j=X_j\prec Z_j+X_j\succ Z_j+X_j\circ Z_j$.
Using Lemma~\ref{lem:para},
we can bound  the first two terms by
 $\|X_j\|_{C_T\bC^{\frac{1}{2}-2\kappa}}\|Z_j\|_{C_T\bC^{-\frac{1}{2}-\kappa}}$.
By \eqref{bdx1} we obtain  
\begin{align*}
&\frac{1}{N}\sum_{j=1}^N \|X_j\|_{C_T\bC^{\frac{1}{2}-\kappa}}^2 \|Z_j\|_{C_T\bC^{-\frac{1}{2}-\kappa}}^2
\\& \lesssim
\frac{1}{N}\sum_{j=1}^N \Big(
\frac{\l^2}{N^2}\sum_{i,l=1}^N \|\tilde{\cZ}^{\<30>}_{ill}\|^2_{C_T\bC^{\frac12-\kappa}}
	+\frac{\l^2}{N}\sum_{l =1}^N\|\cZ^{\<2>}_{j l}\|_{C_T\bC^{-1-\kappa}}^2
\Big)^2 \|Z_j\|_{C_T\bC^{-\frac{1}{2}-\kappa}}^2
\\&
\qquad\qquad +\frac{\l^2}{N^2}\sum_{i,j=1}^N\|\tilde{\cZ}^{\<30>}_{ijj}\|_{C_T\bC^{\frac{1}{2}-2\kappa}}^2\|Z_i\|_{C_T\bC^{-\frac{1}{2}-\kappa}}^2
\lesssim \l^2(1+\l^2)Q_N^6,\end{align*}
with  \label{page Q6}
\begin{align*}
Q_N^6
&\eqdef\frac{1}{N^2}\sum_{i,j=1}^N\|\tilde{\cZ}^{\<30>}_{ijj}\|_{C_T\bC^{\frac{1}{2}-2\kappa}}^2\|Z_i\|_{C_T\bC^{-\frac{1}{2}-\kappa}}^2+\frac{1}{N^2}\sum_{l,j=1}^N \|\cZ^{\<2>}_{jl}\|_{C_T\bC^{-1-\kappa}}^4\|Z_j\|^2_{C_T\bC^{-\frac{1}{2}-\kappa}}
\\
&\qquad +\frac{1}{N^3}\sum_{i,j,l=1}^N\|\tilde{\cZ}^{\<30>}_{ill}\|_{C_T\bC^{\frac{1}{2}-2\kappa}}^4\|Z_j\|^2_{C_T\bC^{-\frac{1}{2}-\kappa}}.
\end{align*}
Using \eqref{sto3} and \eqref{e:XcircZ}  with $i=j$ 
we deduce
\begin{align*}
\frac{1}{N}\sum_{j=1}^N\|X_jZ_j\|^2_{C_T\bC^{-\frac12-\kappa}}
 & \lesssim 
 \l^2(1+\l^2)
 \bigg[
 Q_N^6\Big(1+\frac{1}{N}\sum_{l=1}^N \|\cZ^{\<2>}_{ll}\|_{C_T\bC^{-1-\kappa}}^2\Big)
+\frac{1}{N^2}\Big(\sum_{j,l=1}^N\|\tilde{\cZ}^{\<31>}_{jll,j}\|_{C_T\bC^{-\kappa}}^2\Big)
\\
&+\Big(\frac{1}{N^2}\sum_{i,j=1}^N\|\tilde{\cZ}^{\<30>}_{ijj}\|_{C_T\bC^{\frac{1}{2}-2\kappa}}^2\Big)\Big(\frac{1}{N^2}\sum_{l,k=1}^N\|\cZ^{\<2>}_{kl}\|_{C_T\bC^{-1-\kappa}}^2\|Z_k\|^2_{C_T\bC^{-\frac{1}{2}-\kappa}}\Big)
\bigg]
\\
& \eqdef \l^2(1+\l^2)Q_N^{53}.
\end{align*}
The result follows by the above estimates.
\end{proof}

\subsection{Improved uniform in $N$ estimates for stochastic terms}\label{sec:unis}

In this section we derive uniform in $N$ estimates for the stochastic terms introduced in Section \ref{sec:sto},
which shows that one obtains ``improved estimates'' by gaining  ``factors of $1/N$''.

Recall that
for mean-zero independent random variables $U_{1},\dots,U_{N}$ taking values in a Hilbert space $H$, we have
\begin{equation}
\E \Big \| \sum_{i=1}^{N}U_{i} \Big \|_{H}^{2} =\E \sum_{i=1}^{N}\|U_{i}\|_{H}^{2}.\label{eq:Ui}
\end{equation}
This simple fact is important for us since the square of the sum on
the LHS of \eqref{eq:Ui} appears to have ``$N^2$ terms'' but under expectation it's only a sum of $N$ terms, in a certain sense giving us a ``factor of $1/N$''.
This motivates us to derive the following uniform in $N$ estimate in suitable Hilbert spaces.
We first prove the following result for renormalization terms
$\cZ_{ijj}^{\<30>}$ and $\cZ_{ljj,ij}^{\<32>}$.

\bl\label{lem:sto} Set
 \begin{align*}
 Q^0_N\eqdef\frac{1}{N^2}\sum_{i=1}^N \Big\|\sum_{j=1}^N\tilde{\cZ}^{\<30>}_{ijj}\Big\|^2_{L_T^2H^{\frac12-2\kappa}},\quad
 Q_N^1\eqdef\frac{1}{N^2}\sum_{i=1}^N \Big\|\sum_{j=1}^N \tilde{\cZ}^{\<30>}_{ijj}\Big\|_{W_T^{\frac14-2\kappa,2}L^2}^2,
\end{align*}
\begin{align*}
Q_N^2\eqdef\frac{1}{N^2} \sum_{i=1}^N\bigg(\sum_{j=1}^N\frac{1}{N} \Big\|\sum_{l=1}^N \tilde{\cZ}^{\<32>}_{llj,ij}\Big\|_{L^2_TH^{-\frac12-2\kappa}}\bigg)^2.
\end{align*}
One has  $\E| Q_N^i|^q\lesssim1$
 for every $q\geq 1$ and $i=0,1,2$,
uniformly in $N$ and $m$.
\el

\begin{proof}
	Since we will have several similar calculations in the sequel, we first demonstrate
	such calculation in the case $q=1$. We have for $s=\frac12-2\kappa$
	\begin{align*}
	\E\frac{1}{N^2} \sum_{i=1}^{N}  \Big\|\sum_{j=1}^{N}\Lambda^{s}\tilde{\cZ}^{\<30>}_{ijj}\Big\|^2_{L_T^2L^2}
	=
	\frac{1}{N^2}\sum_{i,j_1,j_2=1}^N\E \Big\la\Lambda^{s} \tilde{\cZ}^{\<30>}_{ij_1j_1},\Lambda^{s} \tilde{\cZ}^{\<30>}_{ij_2j_2}\Big\ra_{L^2_TL^2},
	\end{align*}
where $\Lambda^s=(1-\Delta)^{\frac{s}2}$ is introduced in Section  \ref{sub:1} and we used permutation invariance in law to drop the sum over $i$.
	We have $3$ summation indices and a factor $1/N^2$.
	The contribution to the sum from the cases  $j_1=i$ or $j_2=i$ or  $j_1=j_2$
	is bounded by a constant in light of Lemma \ref{thm:renorm43}. If $i,j_1,j_2$ are all different, by independence
	and the fact that Wick products are mean zero, the terms are zero.

	For general $q\ge 1$,
	by Gaussian hypercontractivity and the fact that $Q_N^i$ is a random variable in finite Wiener chaos, we have for $i=0,1,2$
	\begin{align*}\E[(Q_N^i)^q]\lesssim\E[(Q_N^i)^2]^{q/2}.\end{align*}
So it suffices to consider $q=2$.
 We  write $ \E[(Q_N^0)^2]$ as
	\begin{align*}
	\frac{1}{N^4}\sum_{\substack{i_1,i_2,j_k=1\\ k=1\dots 4}}^N\E \Big\la\Lambda^{s} \tilde{\cZ}_{i_1j_1j_1}^{\<30>} ,\Lambda^{s} \tilde{\cZ}_{i_1j_2j_2}^{\<30>}\Big\ra_{L^2_TL^2}
	\Big\la \Lambda^{s}\tilde{\cZ}_{i_2j_3j_3}^{\<30>},\Lambda^{s} \tilde{\cZ}_{i_2j_4j_4}^{\<30>}\Big\ra_{L^2_TL^2},
	\end{align*}
	We have $6$ indices $i_1, i_2, j_1, ... ,j_4$ summing from $1$ to $N$ and an overall factor $1/{N^4}$. Using again Lemma \ref{thm:renorm43}, we reduce the problem to the cases where five or six of the indices are different.  However, in these two cases, at least one of $j_k$ is different from others. Then by independence the expectation is zero, so $\E[(Q_N^0)^2]\lesssim1$.

	Since $W^{\frac14-2\kappa,2}_TL^2$ is a Hilbert space with inner product given by
	$$\la f,g\ra_{W_T^{\frac14-2\kappa,2}L^2}=\la f,g\ra_{L^2_TL^2}+\int_0^T\int_0^T\frac{\la f(t)-f(r),g(t)-g(r)\ra_{L^2}}{|t-r|^{\frac32-4\kappa}}\dif t\dif r,$$
	 we deduce $\E|Q_N^1|^q\lesssim1$
	 by similar arguments.
	
	In the following we consider $\E|Q_N^2|^2$. First note that $Q_N^2$ is bounded by
	\begin{align*}
	\frac{1}{N^3} \sum_{i,j=1}^N \Big\|\sum_{l=1}^N \tilde{\cZ}^{\<32>}_{llj,ij}\Big\|_{L^2_TH^{-\frac12-2\kappa}}^2,
	\end{align*}
	which implies that $\E|Q_N^2|^2$ is bounded by
\begin{align*}
	\frac{1}{N^6}
	\sum_{\substack{i,j,i_1,j_1,l_k=1\\k=1\dots 4}}^N \E\Big\la \Lambda^{s}\tilde{\cZ}^{\<32>}_{l_1l_1j,ij},\Lambda^s\tilde{\cZ}^{\<32>}_{l_2l_2j,ij}\Big\ra_{L^2_TL^2}
	\Big\la \Lambda^{s}\tilde{\cZ}^{\<32>}_{l_3l_3j_1,i_1j_1},\Lambda^s\tilde{\cZ}^{\<32>}_{l_4l_4j_1,i_1j_1}\Big\ra_{L^2_TL^2},
\end{align*}
	for $s=-\frac12-2\kappa$. 	We have $8$ indices $i, i_1, j, j_1, l_1, ...,l_4$ summing from $1$ to $N$ and an overall factor $1/{N^6}$. Using again Lemma \ref{thm:renorm43}, we reduce the problem to the cases where seven or eight of the indices are different.  However, in these two cases at least one of $l_k$ is different from others. Then by independence the expectation is zero, so $\E[(Q_N^2)^2]\lesssim1$.
	\end{proof}

With the help of Lemma~\ref{lem:sto} we also have the following bounds for $X$,
which states that summing $N$ terms of suitable Hilbert norms of $X_i$ actually ``behaves like order 1''.

\bl\label{lem:X2} The following bounds hold
with proportional constants independent of $N$ and $\l$
\begin{equs}
\sum_{i=1}^N\|X_i\|_{L_T^2H^{\frac12-2\kappa}}^2 & \lesssim \l^2 Q_N^0,
\label{bdnx}
\\
\sum_{i=1}^N\|X_i\|_{W_T^{\frac14-3\kappa,2}L^2}^2 & \lesssim \l^2 Q(\mathbb{Z}),
\label{bdnx1}
\end{equs}
where $Q_N^0$ satisfies the moment bounds  in Lemma~\ref{lem:sto}, and
 $(\E Q(\mathbb{Z})^q)^{1/q}\lesssim1+\l^2$ for any $q\geq1$.
\el
\begin{proof}
The start-point of the proof is similar as that of Lemma~\ref{lem:X}, but now it's crucial that we estimate $X_i$ in a Hilbert space.
Using the Schauder estimate in Lemma~\ref{lem:z1} now (instead of Lemma~\ref{lemma:sch}) and Lemma \ref{lem:para} and \eqref{eq:uz} we have
\begin{equs}[eq:z2]
\|X_i  \|_{L_T^2H^{\frac12-2\kappa}}
& \lesssim
2^{- L/2}\frac{\l}{N}
\sum_{j=1}^N \Big(
\|X_{\qj}\|_{L_T^2H^{\frac12-2\kappa}}\|{\cZ}^{\<2>}_{\qi j}\|_{C_T\bC^{-1-\kappa}}
\Big)
 +\Big\|\frac{\l}{N}\sum_{j=1}^N \tilde{\cZ}^{\<30>}_{ijj}\Big\|_{L_T^2H^{\frac12-2\kappa}}
\\
&\lesssim
2^{- L/2}\bigg(\sum_{j=1}^N \|X_j\|^2_{L_T^2H^{\frac12-2\kappa}}\bigg)^{\frac12}\bigg(\frac{\l^2}{N^2}\sum_{j=1}^N\|{\cZ}^{\<2>}_{ij}\|_{C_T\bC^{-1-\kappa}}^2\bigg)^{\frac12}				
\\
&\quad+2^{-L/2}\|X_i\|_{L_T^2H^{\frac12-2\kappa}}\bigg(\frac{\l}{N}\sum_{j=1}^N\|{\cZ}^{\<2>}_{jj}\|_{C_T\bC^{-1-\kappa}}\bigg)
+\frac{\l}{N}\Big\|\sum_{j=1}^N \tilde{\cZ}^{\<30>}_{ijj}\Big\|_{L_T^2H^{\frac12-2\kappa}}.
\end{equs}
Note that a key difference between the bound here and the proof of Lemma~\ref{lem:X}
is that
by considering $\|\frac{\l}{N}\sum_{j=1}^N \tilde{\cZ}^{\<30>}_{ijj}\|_{L_T^2H^{\frac12-2\kappa}}$ instead of $\frac{\l}{N}\sum_{j=1}^N\| \tilde{\cZ}^{\<30>}_{ijj}\|_{L_T^2H^{\frac12-2\kappa}}$, we gain ``a factor of $1/N$'' by Lemma \ref{lem:sto} and the discussion before Lemma \ref{lem:sto}.
Then taking  square on  both sides and summing over $i$ we obtain
\begin{align*}
\sum_{i=1}^N\|X_i\|_{L_T^2H^{\frac12-2\kappa}}^2
&\lesssim 2^{- L}\bigg(\sum_{j=1}^N \|X_j\|^2_{L_T^2H^{\frac12-2\kappa}}\bigg)\bigg(\frac{\l^2}{N^2}\sum_{i,j=1}^N\|{\cZ}^{\<2>}_{ij}\|_{C_T\bC^{-1-\kappa}}^2\bigg)
\\
&+2^{- L}\bigg(\sum_{i=1}^N\|X_i\|^2_{L_T^2H^{\frac12-2\kappa}}\bigg)\bigg(\frac{\l}{N}\sum_{j=1}^N\|{\cZ}^{\<2>}_{jj}\|_{C_T\bC^{-1-\kappa}}\bigg)^2
+\frac{\l^2}{N^2}\sum_{i=1}^N \Big\|\sum_{j=1}^N \tilde{\cZ}^{\<30>}_{ijj}\Big\|^2_{L_T^2H^{\frac12-2\kappa}}.
\end{align*}
By the choice of $2^{L}$ in \eqref{L}, \eqref{bdnx} follows.

To show \eqref{bdnx1},
again by Lemma \ref{lem:z1} and Lemma \ref{lem:para} we have
\begin{align*}
 \|X_i\|_{W_T^{\frac14-3\kappa,2}L^2}^2
\lesssim
\frac{\lambda^2}{N^2} \Big\|\sum_{j=1}^N \tilde{\cZ}^{\<30>}_{ijj}\Big\|_{W_T^{\frac14-3\kappa,2}L^2}^2
+\Big(\frac{\lambda}{N}\sum_{j=1}^N
\|X_{\qj}\|_{L_T^2H^{\frac12-2\kappa}}\|{\cZ}^{\<2>}_{\qi j}\|_{C_T\bC^{-1-\kappa}}
\Big)^2,
\end{align*}
which combined with \eqref{bdnx} and H\"older's inequality for  the sum over $j$ implies that
\begin{align*}
\sum_{i=1}^N\|X_i\|_{W_T^{\frac14-3\kappa,2}L^2}^2
& \lesssim   \l^2 Q_N^1+\bigg(\sum_{j=1}^N \|X_j\|^2_{L_T^2H^{\frac12-2\kappa}}\bigg)\bigg(\frac{\l^2}{N^2}\sum_{i,j=1}^N\|{\cZ}^{\<2>}_{ij}\|^2_{C_T\bC^{-1-\kappa}}\bigg)
\\
&\qquad\qquad +\bigg(\sum_{i=1}^N\|X_i\|^2_{L_T^2H^{\frac12-\kappa}}\bigg)\bigg(\frac{\l}{N}\sum_{j=1}^N \|{\cZ}^{\<2>}_{jj}\|_{C_T\bC^{-1-\kappa}}\bigg)^2
\end{align*}
where $Q_N^0, Q_N^1$ are introduced in Lemma \ref{lem:sto}. 
Using \eqref{bdnx} with Lemma~\ref{thm:renorm43} and Lemma~\ref{lem:sto},   \eqref{bdnx1} follows.
\end{proof}

In the following lemma  we deduce the estimates in a suitable Hilbert space for  the  renormalized terms introduced  in \eqref{sto1}, \eqref{sto2} and \eqref{sto3} before Lemma~\ref{lem:X1}.

\bl\label{lem:X3} The following bounds hold
with $\E Q(\mathbb{Z})^q\lesssim 1+\l^4$  uniformly in $N$ and $\l$ for all $q\geq1$ 
\begin{equs}
\frac{1}{N^2} \sum_{i=1}^N\Big(\sum_{j=1}^N\| \; \bullet \; \|_{L_T^2H^{-\frac12-2\kappa}}\Big)^2  &\leq \l^4 Q(\mathbb{Z}),
\qquad
\bullet \in \{X^2_jZ_i  \;,\; X_jX_iZ_j\} \;,
\\
\frac{1}{N^2}\sum_{i=1}^N\Big(\sum_{j=1}^N\| \;\bullet\; \|_{L^2_TH^{-\frac12-2\kappa}}\Big)^2  &\leq \l^2 Q(\mathbb{Z}),
\qquad
\bullet \in \{ X_i\circ {\CZ}^{\<2>}_{jj} \;,\; \,X_j\circ {\cZ}^{\<2>}_{ij}\} \;.
\end{equs}
\el

\begin{proof} 
	 From the proof of Lemma~\ref{lem:X1} we have
	\begin{align}\label{eq:l1}
	\frac{1}{N^2}\sum_{i,j=1}^N\|X_j\circ Z_i\|^2_{C_T\bC^{-\kappa}}\lesssim \l^2(1+\l^2) Q(\mathbb{Z}),
	\end{align}
with $Q(\mathbb{Z})$ satisfying $\E Q(\mathbb{Z})^q\lesssim 1$ for any $q\geq1$.
	Now we consider $X_j^2Z_i$ and use paraproduct to have the following decomposition
\begin{align}
X^2_j\circ Z_i &=2(X_j\prec X_j)\circ Z_i+(X_j\circ X_j)\circ Z_i\nonumber
\\&=2X_j(X_j\circ Z_i)+2\tilde{C}(X_j,X_j,Z_i)+(X_j\circ X_j)\circ Z_i,\label{sto4}
\end{align}
where $\tilde{C}$ is introduced in Lemma \ref{lem:com1}.
By Lemma \ref{lem:para} and Lemma \ref{lem:com1} we have
	\begin{align*}
	\|X^2_j\circ Z_i\|_{L_T^2H^{-1/2-2\kappa}}&\lesssim \|X_j\|_{L_T^2H^{\frac12-2\kappa}}\|X_j\circ Z_i\|_{C_T\bC^{-\kappa}}
	\\
	& \;+\|X_j\|_{L_T^2H^{\frac12-2\kappa}}\|X_j\|_{C_T\bC^{\frac12-\kappa}}\|Z_i\|_{C_T\bC^{-\frac12-\kappa}},
	\end{align*}
	and
\begin{align*}
	\|X_j^2\prec Z_i\|_{L_T^2H^{-\frac12-2\kappa}}  +\|X^2_j\succ Z_i\|_{L_T^2H^{-\frac12-2\kappa}}
	\lesssim\|X_j\|_{L_T^2H^{\frac12-2\kappa}}\|X_j\|_{C_T\bC^{\frac12-\kappa}}\|Z_i\|_{C_T\bC^{-\frac12-\kappa}}.
\end{align*}
Thus H\"{o}lder's inequality implies that
\begin{align*}
	\frac{1}{N^2} \sum_{i=1}^N\Big(\sum_{j=1}^N 
	&  \|X^2_jZ_i\|_{L_T^2H^{-\frac12-2\kappa}}\Big)^2
	\lesssim \frac{1}{N^2} \sum_{i=1}^N\Big(\sum_{j=1}^N\|X_j\|^2_{L_T^2H^{\frac12-2\kappa}}\Big)\Big(\sum_{j=1}^N\|X_j\circ Z_i\|^2_{C_T\bC^{-\kappa}}\Big)
	\\ &+  \frac{1}{N^2} \sum_{i=1}^N\Big(\sum_{j=1}^N\|X_j\|^2_{L_T^2H^{\frac12-2\kappa}}\Big)\Big(\sum_{j=1}^N\|X_j\|^2_{C_T\bC^{\frac12-\kappa}}\Big)\|Z_i\|^2_{C_T\bC^{-\frac12-\kappa}}.
\end{align*}
	Then by \eqref{eq:l1}, Lemma~\ref{lem:X}, Lemma~\ref{lem:X2}  and Lemma \ref{lem:sto} we obtain the  bound for $X^2_jZ_i$. The bound for $X_iX_jZ_j$ follows in the same way.
	
Moreover, by the decomposition for  $X_j\circ {\cZ}^{\<2>}_{ij}$ in \eqref{sto1} 
and   Lemmas~\ref{lem:z1}, \ref{lem:para} and \ref{lem:com},
\begin{align*}
\|X_j & \circ {\cZ}^{\<2>}_{ij}\|_{L^2_TH^{-\frac12-2\kappa}}
 \lesssim
\frac{\l}{N} \Big\|\sum_{l=1}^N \tilde{\cZ}^{\<32>}_{llj,ij}\Big\|_{L^2_TH^{-\frac12-2\kappa}}
+\frac{\l}{N}\|(\tilde{b}-b)X_i\|_{L^2_TH^{-\frac12-2\kappa}}
\\
& +\frac{\l}{N}\sum_{l=1}^N\bigg[
\|X_{\ql}\|_{L_T^2H^{\frac12-2\kappa}}\|{\cZ}^{\<22>}_{\qj l,ij}\|_{C_T\bC^{-\kappa}}
\\
&+ \Big(  
(1+2^{3\kappa L}) \|X_{\ql}\|_{L^2_TH^{\frac12-2\kappa}}  \|{\cZ}^{\<2>}_{\qj l}\|_{C_T\bC^{-1-\kappa}}
+\|X_{\ql}\|_{W^{\frac14-3\kappa,2}L^2} \|{\cZ}^{\<2>}_{\qj l}\|_{C_T\bC^{-1-\kappa}}
\Big) 
\|{\cZ}^{\<2>}_{ij}\|_{C_T\bC^{-1-\kappa}}
	\bigg]
\end{align*}
	where we used \eqref{L} and
	$$\|\UU_\leq {\cZ}^{\<2>}_{jl}\|_{\bC^{-1+2\kappa}}+\|\UU_\leq {\cZ}^{\<2>}_{ll}\|_{\bC^{-1+2\kappa}}\lesssim 2^{3\kappa L}(\|{\cZ}^{\<2>}_{jl}\|_{\bC^{-1-\kappa}}+\|{\cZ}^{\<2>}_{ll}\|_{\bC^{-1-\kappa}}).$$
Moreover, using \eqref{c:b} we get $$\frac{1}{N^2}\sum_{i=1}^N\Big(\frac1N\sum_{j=1}^N\frac{\l}{N}\|(\tilde{b}-b)X_i\|_{L^2_TH^{-\frac12-2\kappa}}\Big)^2\lesssim \frac{\l^2}{N^2}\sum_{i=1}^N\|X_i\|_{C_T\bC^{-\frac12-\kappa}}^2.$$
	Thus by Lemma \ref{lem:X2} and Lemma \ref{lem:X} we have
	\begin{align*}&\frac{1}{N^2} \sum_{i=1}^N\bigg(\sum_{j=1}^N\|X_j\circ {\cZ}^{\<2>}_{ij}\|_{L^2_TH^{-\frac12-2\kappa}}\bigg)^2
	\lesssim \l^2Q^2_N
	+\l^4Q(\mathbb{Z}),
	\end{align*}
	with $Q(\mathbb{Z})$ satisfying $(\E Q(\mathbb{Z})^q)^{1/q}\lesssim1+\l^2$ which combined with Lemma \ref{lem:sto} implies the bound for $X_j\circ {\cZ}^{\<2>}_{ij}$.
	The last bound regarding $X_i\circ {\CZ}^{\<2>}_{jj} $ then follows in the same way.
\end{proof}

\subsection{Decomposition}\label{sec:dec}

In this section we consider the following equation
\begin{equation}\label{eq:21}\LL \Phi_i
=-\frac{\l}{N}\sum_{j=1}^N\Phi_j^2\Phi_i+\xi_i,
\quad \Phi_i(0) 
\in \bC^{-\frac12-\kappa},
\end{equation}
for $\kappa>0$. For fixed $N$ and $\omega\in \Omega_0$ by using regularity structure theory \cite{Hairer14} or paracontrolled distribution method \cite{GIP15}, we easily deduce the local well-posedness of \eqref{eq:21} in $C_T\bC^{-\frac12-\kappa}$ and $\Phi_i$ is the limit of $\Phi_{i,\eps}$, which is the unique  solution to equation \eqref{eq:ap}. Furthermore, by similar arguments as in \cite{MW18, GubCMP19} global well-posedness also holds by uniform estimates, which may depend on $N$. In the following we concentrate on {\it the uniform in $N$} bounds.

With the stochastic objects at hand, we have the following decompositions: $\Phi_{i}=Z_i+X_i+Y_i$ with $Y_i$ satisfying the following equation  \footnote{This decomposition follows \cite{GH18} which uses different notation than $Z+X+Y$ here. Our choice of this notation is close to our 2D paper \cite{SSZZ20}.} 
\begin{equation}\label{eq:22}\aligned
\LL Y_i
=&-\frac{\l}{N}\sum_{j=1}^N\bigg(Y_j^2Y_i+(X_j^2+2X_jY_j)(X_i+Y_i)+Y_j^2X_i+(X_j+Y_j)^2Z_i
\\&+2(X_j+Y_j)(X_i+Y_i)Z_j+2X_j\prec \UU_\leq\cZ^{\<2>}_{ij} +X_i\prec \UU_\leq\cZ^{\<2>}_{jj}
\\&+2Y_j\prec\cZ^{\<2>}_{ij}+Y_i\prec\cZ^{\<2>}_{jj}+2(X_j+Y_j)\succcurlyeq\cZ^{\<2>}_{ij}+(X_i+Y_i)\succcurlyeq\cZ^{\<2>}_{jj}\bigg) \;,
\\Y_i(0)=&\,\Phi_i(0)-Z_i(0)-X_i(0) \;.
\endaligned\end{equation}
Here the first line in \eqref{eq:22} are the expansion of $(Y_j+X_j)^2(Y_i+X_i+Z_i)$; the terms containing $\cZ^{\<2>}_{ij}$ and $\cZ^{\<2>}_{jj}$ correspond to the remaining terms in
 the paraproduct expansion of $2(Y_j+X_j)Z_jZ_i$ and
$(Y_i+X_i)Z_j^2$, respectively.

Also, note that some products in \eqref{eq:22} are understood via renormalization.
Namely, $X_j^2Z_i$, $X_jZ_i$, $X_jX_iZ_j$, $X_j\circ \cZ^{\<2>}_{ij}$
and $X_i\circ \cZ^{\<2>}_{jj}$ are understood using \eqref{sto1}-\eqref{sto3}, \eqref{sto4} and Lemmas \ref{lem:X1}+\ref{lem:X3}. Since $\cZ^{\<2>}_{ij}\in C_T\bC^{-1-\kappa}$,
the expected regularity of $Y_i$ is $C_T\bC^{1-\kappa}$; so in view of Lemma \ref{lem:para} $Y_j\circ \cZ^{\<2>}_{ij}$ and $Y_i\circ \cZ^{\<2>}_{jj}$ are not well-defined in the classical sense and we need to use the renormalization terms $\cZ^{\<22>}_{ij,kl}$ to define these terms, which from the approximation level requires to subtract $\frac{3\l(N+2)}{N^2} \tilde{b}_\eps Y_{i,\eps}$ on the R.H.S of \eqref{eq:22}, where $Y_{i,\eps}=\Phi_{i,\eps}-X_{i,\eps}-Z_{i,\eps}$.

In the following we establish $L^2$-energy estimate for $Y_i$,
 and as explained in the introduction we follow the idea in \cite{GH18} to use the duality between $\prec$ and $\circ$, i.e.
Lemma~\ref{lem:com3} to cancel  $Y_j\circ \cZ^{\<2>}_{ij}$ and $Y_i\circ \cZ^{\<2>}_{jj}$ which would require paracontrolled ansatz and higher regularity estimate than $H^1$. 

Recall that $\mathscr{D}=m-\Delta$ and we define
\begin{equation}\label{phi}
\varphi_i\eqdef Y_i+\mathscr{D}^{-1}\frac{\l}{N}\sum_{j=1}^N
 \big(Y_{\qj} \prec\cZ^{\<2>}_{\qi j}\big)
\eqdef
Y_i+\mathscr{D}^{-1}P_i,
\end{equation}
where in the last step we defined $P_i$.

Now we turn to uniform in $N$ bounds on \eqref{eq:22} and note that $Y_i$ depends on $N$, but we omit this throughout.
Similar as the 2d case in \cite{SSZZ20}, we do $L^2$-energy estimate of $Y_i$ and take sum over $i$.
Using \eqref{phi}, we have the following decomposition.

\bl\label{den}
(Energy balance)
\begin{align}
\frac{1}{2} \sum_{i=1}^{N}\frac{\dif}{\dif t}\|Y_{i}\|_{L^{2}}^{2}+m\sum_{i=1}^{N}\| \varphi_{i}\|_{L^{2}}^{2}+\sum_{i=1}^{N}\|\nabla \varphi_{i}\|_{L^{2}}^{2}+\frac{\l}{N}\Big \|\sum_{i=1}^{N}Y_{i}^{2} \Big \|_{L^{2}}^{2}=\Theta+\Xi. \nonumber
\end{align}
Here
\begin{align*}
\Theta=
\sum_{i=1}^N \langle \mathscr{D}^{-1}P_i,P_i\rangle
-\frac{\l}{N}\sum_{i,j=1}^N \Big(2D(Y_i,{\cZ}^{\<2>}_{ij},Y_j)+D(Y_i,{\cZ}^{\<2>}_{jj},Y_i)\Big),
\end{align*}
and
\begin{align*}
\Xi=&-\frac{\l}{N}\sum_{i,j=1}^N\bigg\langle (X_j^2+2X_jY_j)(X_i+Y_i)+Y_j^2X_i+(X_j+Y_j)^2 Z_i
\\
&+2(X_j+Y_j)(X_i+Y_i)Z_j+2X_j\prec \UU_\leq {\cZ}^{\<2>}_{ij} +X_i\prec \UU_\leq {\cZ}^{\<2>}_{jj}
\\&+2(X_j+Y_j)\succ {{\cZ}^{\<2>}_{ij}}+2X_j\circ {{\cZ}^{\<2>}_{ij}}+(X_i+Y_i)\succ {\cZ}^{\<2>}_{jj}+X_i\circ {\cZ}^{\<2>}_{jj} \;\; ,\;\; Y_i\bigg\rangle.
\end{align*}

\el

\begin{proof}
We will first focus on a formal derivation for the claimed identity.
Then we will remark that although new renormalization appears to be necessary
when we take inner product,  in the end they cancel each other.

Taking inner product with $Y_i$ in $L^2$ on  \eqref{eq:22},
 we realize that the first term on the R.H.S. of \eqref{eq:22} leads to the term $-\frac{\l}N \|\sum_{i=1}^{N}Y_{i}^{2}  \|_{L^{2}}^{2}$, and it is straightforward to check that  the other terms all lead to   $\Xi$ except
  the following terms
 \begin{align}
 \label{zmm1}
-\frac{\l}{N}\sum_{i,j=1}^N\langle 2Y_j\preccurlyeq\cZ^{\<2>}_{ij}+Y_i\preccurlyeq\cZ^{\<2>}_{jj},Y_i\rangle.
 \end{align}
 We claim that we can write  \eqref{zmm1} plus $\sum_{i=1}^N\langle (\Delta-m) Y_i,Y_i\rangle$ as $\Theta-m\sum_{i=1}^{N}\| \varphi_{i}\|_{L^{2}}^{2}-\sum_{i=1}^{N}\|\nabla \varphi_{i}\|_{L^{2}}^{2}$ (see Eq.~\eqref{zmm8}) and we will prove this claim for the rest of this proof.

Using \eqref{phi} we write it as
\begin{align}\label{zmm2}
\langle (\Delta-m)Y_i,Y_i\rangle=\langle (\Delta-m) \varphi_i,\varphi_i\rangle+2\langle Y_i,P_i\rangle+\langle \mathscr{D}^{-1}P_i,P_i\rangle. \end{align}
 We will realize below that $2\langle Y_i,P_i\rangle$ cancels the irregular part  (i.e. the  paraproduct $\preccurlyeq$ part) in \eqref{zmm1}.

For the term in \eqref{zmm1}, by \eqref{phi} we have
\begin{equ}[zmm7]
-\frac{\l}{N}\sum_{j=1}^N\langle 2Y_j\preccurlyeq\cZ^{\<2>}_{ij}+Y_i\preccurlyeq\cZ^{\<2>}_{jj},Y_i\rangle
=
-\langle Y_i,P_i\rangle
-\frac{\l}{N}\sum_{j=1}^N\langle 2Y_j\circ\cZ^{\<2>}_{ij}+Y_i\circ\cZ^{\<2>}_{jj},Y_i\rangle
\end{equ}
where we note that  the first term $-\langle Y_i,P_i\rangle$ on the R.H.S.
 precisely cancels one $\langle Y_i,P_i\rangle$ on the R.H.S. of \eqref{zmm2}. Using Lemma \ref{lem:com3}, the other terms in \eqref{zmm7} containing $\circ$ can be written as
\begin{align}
\langle Y_i,2Y_j\circ\cZ^{\<2>}_{ij}\rangle &=2\langle Y_i\prec \cZ^{\<2>}_{ij},Y_j\rangle+2D(Y_i,\cZ^{\<2>}_{ij},Y_j),
		\label{com1}
\\
\langle Y_i,Y_i \circ \cZ^{\<2>}_{jj} \rangle
&=\langle Y_i\prec \cZ^{\<2>}_{jj},Y_i\rangle
	+D(Y_i,\cZ^{\<2>}_{jj},Y_i),		\label{com2}
\end{align}
where $D(f,g,h)$ is the commutator introduced in  Lemma \ref{lem:com3}.
The first terms on the R.H.S. of \eqref{com1} and \eqref{com2}  cancel the other $\langle Y_i,P_i\rangle$ from \eqref{zmm2} when taking sum.

We sum all the terms in \eqref{zmm2} and \eqref{zmm7} w.r.t. $i$ and obtain the following:
\begin{align}
&\sum_{i=1}^N\bigg[\langle (\Delta-m) Y_i,Y_i\rangle-\frac{\l}{N}\sum_{j=1}^N\langle 2Y_j\preccurlyeq\cZ^{\<2>}_{ij}+Y_i\preccurlyeq\cZ^{\<2>}_{jj},Y_i\rangle\bigg]   \label{zmm8}
\\=&\sum_{i=1}^N\big[\langle (\Delta-m) \varphi_i,\varphi_i\rangle+\langle \mathscr{D}^{-1}P_i,P_i\rangle\big]-\frac{\l}{N}\sum_{i,j=1}^N\big(2D(Y_i,\cZ^{\<2>}_{ij},Y_j)+D(Y_i,\cZ^{\<2>}_{jj},Y_i)\big).   \notag
\end{align}
This completes the derivation of the claimed identity.

\vspace{1ex}

Finally, we  remark that several terms in the above derivation should be understood in the renormalized sense.
As we have noticed above, the term $\sum_{i=1}^N 2 \la P_i,Y_i\ra$ cancels with the irregular part from  \eqref{zmm1}, so no extra renormalization is needed for them.

For the last term in \eqref{zmm2} we have
\begin{equation}\label{eq:P1}
\aligned
\langle \mathscr{D}^{-1}P_i,P_i\rangle
&=\frac{\l}{N}\sum_{j=1}^N\bigg[\langle (\mathscr{D}^{-1}P_i) \circ\cZ^{\<2>}_{ij},2Y_j\rangle-2D(Y_j,\cZ^{\<2>}_{ij},\mathscr{D}^{-1}P_i)
\\
&\qquad+\langle (\mathscr{D}^{-1}P_i)\circ\cZ^{\<2>}_{jj}, Y_i\rangle-D(Y_i,\cZ^{\<2>}_{jj},\mathscr{D}^{-1}P_i)\bigg].
\endaligned
\end{equation}
Using the definition of $\mathscr{D}^{-1}P_i$ in \eqref{phi} and the commutator $C$ introduced in Lemma \ref{lem:com2} we can write the above term as
\begin{equs}
\frac{\l}{N}\sum_{j=1}^N&\bigg(\frac{\l}{N}\sum_{l=1}^N\bigg[\langle \tilde{\cZ}^{\<22>}_{il,ij}
,4Y_lY_j\rangle+4\langle C(Y_l,\cZ^{\<2>}_{il},\cZ^{\<2>}_{ij}),Y_j\rangle
+\langle \tilde{\cZ}^{\<22>}_{ll,ij}
,2Y_iY_j\rangle+2\langle C(Y_i,\cZ^{\<2>}_{ll},\cZ^{\<2>}_{ij}),Y_j\rangle
\\
&+\langle \tilde{\cZ}^{\<22>}_{il,jj}
,2Y_lY_i\rangle+2\langle C(Y_l,\cZ^{\<2>}_{il},\cZ^{\<2>}_{jj}),Y_i\rangle
+\langle \tilde{\cZ}^{\<22>}_{ll,jj}
,Y_i^2\rangle+\langle C(Y_i,\cZ^{\<2>}_{ll},\cZ^{\<2>}_{jj}),Y_i\rangle\bigg]
\\
&-2D(Y_j,\cZ^{\<2>}_{ij},\mathscr{D}^{-1}P_i)-D(Y_i,\cZ^{\<2>}_{jj},\mathscr{D}^{-1}P_i)\bigg).		\label{eqPi}
\end{equs}
Here -- recall the renormalization of $\tilde{\cZ}^{\<22>}$ in Section~\ref{sec:ren} -- we need to subtract
\begin{equ}[e:N+8]
\frac{\l^2(N+8)}{N^2} \tilde{b}_\eps \langle Y_{i,\eps},Y_{i,\eps}\rangle+\frac{2\l^2}{N^2}\sum_{j\neq i} \tilde{b}_\eps \langle Y_{j,\eps},Y_{j,\eps}\rangle
\end{equ}
 from  the approximation level to go from \eqref{eq:P1} to \eqref{eqPi}.
 This precisely matches the renormalization from the SPDE \eqref{eq:ap}:
 indeed, summing \eqref{e:N+8} over $i$, we get $\frac{3N+6}{N^2} \l^2 \tilde{b}_\eps $ times  $\sum_i \langle Y_{i,\eps},Y_{i,\eps}\rangle$.

We thus conclude that
although new renormalization appears to be necessary when we
take inner product, in the end no extra renormalization other than the ones introduced at the level of the SPDE
is actually needed.
\end{proof}

\section{Uniform in $N$ estimates}\label{sec:uni}

In this section we prove uniform in $N$ estimates based on  Lemma \ref{den}.
The main results are Theorems~\ref{Y:T4} and \ref{Y:T4a}.
The key step to prove these theorems
 is to bound $\int_0^T(\Theta+\Xi)\dif t$ by
\begin{equation}\label{zmm4}
\aligned&\delta\Big(\sum_{j=1}^N\|\nabla  \varphi_{j}\|_{L_T^2L^2}^2+\sum_{j=1}^N\| Y_{j}\|_{L_T^2H^{1-2\kappa}}^2+\frac{\l}{N}\Big\|\sum_{i=1}^NY_{i}^2\Big\|_{L^2_TL^2}^2\Big)
\\&+C_\delta\l(1+\l^{55})\int_0^T\Big(\sum_{i=1}^N\|Y_i\|_{L^2}^2\Big)(R_N^1+R_N^2+Q_N^3)\dif s+\l(1+\l) Q_N^4
\endaligned\end{equation}
for a small constant $\delta>0$
with $R_N^1, R_N^2, Q_N^3, Q_N^4$ introduced in Lemma~\ref{lem:zz2}, Propositions \ref{theta1} and \ref{Xi1}. 
 \eqref{zmm4} will immediately follow from  Propositions \ref{theta1} and \ref{Xi1} in Section \ref{sec:4.2}. The finite moments of $R_N^1, R_N^2, Q_N^3, Q_N^4$ are bounded uniformly in $N$, which follows from  Lemma \ref{thm:renorm43} and  Lemma~\ref{lem:zmm}, \eqref{e:Q4}.

\subsection{Uniform estimates based on \eqref{zmm4}}
We prove \eqref{zmm4} in Section \ref{sec:4.2}. 
In this subsection we prove Theorem \ref{Y:T4} and Theorem \ref{Y:T4a}, assuming \eqref{zmm4}.
Before this we first prove the following two results. The first one is used to turn $\|\varphi_i\|_{L^2}^2$ on the LHS of the identity in Lemma~\ref{den} to
$\|Y_i\|_{L^2}^2$. The second one  gives uniform in $N$ estimates of various norms of  $Y_{i}, \varphi_i, \sD^{-1}P_i$ in terms of \eqref{zmm4} by using \eqref{phi}.

\bl\label{lem:zz2}
The following bound holds with $C$ independent of  $\l$ and $m\geq1$
\begin{align*}
m\sum_{i=1}^{N}\| \varphi_{i}\|_{L^{2}}^{2}
\geq
\Big(\frac{m}{2}-C\l^{\frac{2}{1-2\kappa}} (R_N^1)^{\frac1{1-2\kappa}} \Big)\sum_{i=1}^{N}\| Y_{i}\|_{L^{2}}^{2}
	+m\sum_{i=1}^{N}\|\mathscr{D}^{-1}P_i\|_{L^2}^2,
\end{align*}
with
$$
R_N^1
\eqdef \frac{1}{N^2}\sum_{i,j=1}^N\|{{\cZ}^{\<2>}_{ij}}\|_{\bC^{-1-\kappa}}^{2}
+\frac{1}{N}\sum_{j=1}^N\| {\cZ}^{\<2>}_{jj}\|_{\bC^{-1-\kappa}}^{2}+ 1\;.
$$
\el
\begin{proof} By  definition of $\varphi_i$ in \eqref{phi},
\begin{align}\label{eq:zmm}
m\sum_{i=1}^{N}\| \varphi_{i}\|_{L^{2}}^{2}=m\sum_{i=1}^{N}\| Y_{i}\|_{L^{2}}^{2}-2m\sum_{i=1}^{N}\la Y_i,\mathscr{D}^{-1}P_i\ra +m\sum_{i=1}^{N}\|\mathscr{D}^{-1}P_i\|_{L^2}^2.
\end{align}
It remains to control the second term on the RHS.
Note that for $\kappa>0$
\begin{equs}[zmm3]
&\|m\mathscr{D}^{-1}f\|_{L^2}^2=\sum_k\frac{m^2}{(m+|k|^2)^2}|\hat{f}(k)|^2
\\
&\leq m^{1+\kappa}\sum_k\frac{1}{(m+|k|^2)^{1+\kappa}}|\hat{f}(k)|^2
\leq
m^{1+\kappa}\|f\|_{H^{-1-\kappa}}^2.
\end{equs}
By \eqref{zmm3} and Lemma \ref{lem:para} we have
\begin{align*}
&\sum_{i=1}^N|\la Y_i,m\mathscr{D}^{-1}P_i\ra |\leq \sum_{i=1}^N\|Y_i\|_{L^2}\|m\mathscr{D}^{-1}P_i\|_{L^2}
\\
&\leq
m^{\frac{1+2\kappa}{2}}\sum_{i=1}^N\|Y_i\|_{L^2}   \bigg\|\frac{\l}{N}\sum_{j=1}^N
(Y_{\qj}\prec{\cZ}^{\<2>}_{\qi j} )\bigg\|_{H^{-1-2\kappa}}
\\
&\lesssim
m^{\frac{1+2\kappa}{2}}\frac{\l}{N}\sum_{i,j=1}^N\|Y_i\|_{L^2}
\|Y_{\qj}\|_{L^2}\|{\cZ}^{\<2>}_{\qi j}\|_{\bC^{-1-\kappa}}
\\
&\leq m^{\frac{1+2\kappa}{2}}\Big(\sum_{i=1}^N\|Y_i\|_{L^2}^2\Big)\bigg[\bigg(\frac{\l^2}{N^2}\sum_{i,j=1}^N\|{\cZ}^{\<2>}_{ij}\|_{\bC^{-1-\kappa}}^2\bigg)^{\frac12}
+\Big(\frac{\l}{N}\sum_{j=1}^N\| {\CZ}^{\<2>}_{jj}\|_{\bC^{-1-\kappa}}\Big)\bigg]
\\
&\leq \sum_{i=1}^N\|Y_i\|_{L^2}^2\bigg[\frac{m}{4}+C\bigg(\frac{\l^2}{N^2}\sum_{i,j=1}^N\|{\cZ}^{\<2>}_{ij}\|_{\bC^{-1-\kappa}}^2\bigg)^{\frac{1}{1-2\kappa}}
+ C\bigg(\frac{\l }{N}\sum_{j=1}^N\| {\CZ}^{\<2>}_{jj}\|_{\bC^{-1-\kappa}}\bigg)^{\frac{2}{1-2\kappa}}\bigg]
\end{align*}
where we used Young's inequality in the last step. Now the result follows.
\end{proof}

The following estimates will be useful in the sequel. Recall $R_N^1$ from Lemma \ref{lem:zz2}.

\bl\label{lem:phi1} The following holds
with $C$ independent of  $\l$ and $m\geq1$ 
\footnote{In the first bound, we keep the explicit constant $2$
for the purpose of the proof of Theorem~\ref{Y:T4};
in the third bound, we keep the explicit constant $2$ in order to derive  the condition for $m$ and $\l$ later.}
\begin{align}
\sum_{i=1}^N\|Y_i\|^2_{H^{1-2\kappa}}
& \leq
2\Big(\sum_{i=1}^{N}\| \varphi_{i}\|_{H^1}^{2}\Big)+ C \l^2  \Big(\sum_{i=1}^{N}\|Y_{i}\|^2_{L^2}\Big)R_N^1,  
\label{e:3.2.1}
\\
\sum_{i=1}^N\|\mathscr{D}^{-1}P_i\|^2_{H^{1-2\kappa}}
& \leq  C\l^2 \Big(\sum_{i=1}^{N}\|Y_{i}\|^2_{L^2}\Big)R_N^1,
\label{eq:zz3}
\\
\sum_{i=1}^N\|\varphi_i\|^2_{L^2}
&\leq
\Big(\sum_{i=1}^{N}\|Y_{i}\|^2_{L^2}\Big)(C\l^2R_N^1+2),\notag
\\
\frac{1}{N}\sum_{i,j=1}^N\|\varphi_i\varphi_j\|^2_{L^2}
&\leq   \frac{C}{N}\Big \|\sum_{i=1}^{N}Y_{i}^{2}\Big\|^2_{L^2}(\l^2 R_N^1 + 1)^2.\notag
\end{align}
\el
\begin{proof}
	Recalling  the relation $\varphi_i  = Y_i+\mathscr{D}^{-1}P_i$ in the  definition \eqref{phi}, we will see that we essentially only need to estimate $\|\mathscr{D}^{-1}P_i\|_{H^{1-2\kappa}}$ for the first three inequalities  in the lemma.
Since $\|\sD f\|_{H^\beta}\simeq\|f\|_{H^{\beta+2}}$ for $\beta\in\mR$,  by
Lemma \ref{lem:para} we have
	\begin{align}
	&\|\mathscr{D}^{-1}P_i\|_{H^{1-2\kappa}}
	\leq\frac{\l C}{N}\sum_{j=1}^{N} \Big(\|Y_{j}\|_{L^2}\|{\cZ}^{\<2>}_{ij}\|_{\bC^{-1-\kappa}}+\|Y_i\|_{L^2}\|\cZ^{\<2>}_{jj}\|_{\bC^{-1-\kappa}}\Big)\label{eq:Pi}
	\\
	&\leq C\Big(\sum_{j=1}^{N}\|Y_{j}\|^2_{L^2}\Big)^{\frac12}\Big(\frac{\l^2}{N^2}\sum_{j=1}^N\|{\cZ}^{\<2>}_{ij}\|^2_{\bC^{-1-\kappa}}\Big)^{\frac12}
	+C\|Y_i\|_{L^2}\Big(\frac{\l}{N}\sum_{j=1}^N\|\cZ^{\<2>}_{jj}\|_{\bC^{-1-\kappa}}\Big).\notag
\end{align}
The second bound for $\mathscr{D}^{-1}P_i$ follows from taking square on both sides of  \eqref{eq:Pi} and summing over $i$; this together with
$$
\|Y_i\|_{H^{1-2\kappa}}\leq \| \varphi_{i}\|_{H^1}+\|\mathscr{D}^{-1}P_i\|_{H^{1-2\kappa}}$$
also yields the first bound.
	The third bound for
	 $\varphi_i$ follows  by $\|\varphi_i\|_{L^2} \le \|Y_i\|_{L^2}+\|\mathscr{D}^{-1}P_i\|_{H^{1-2\kappa}}$
	  and simply plugging in the above bound on $\mathscr{D}^{-1}P_i$.

	Moreover, by the bound \eqref{eq:zz3} for $\mathscr{D}^{-1}P_i$ we have
\begin{align*}
	\frac{1}{N}\sum_{i,j=1}^N\|\varphi_i\varphi_j\|^2_{L^2}
	&=\frac{1}{N}\Big \|\sum_{i=1}^{N}\varphi_{i}^{2}\Big\|^2_{L^2}
	\leq \frac{C}{N}\Big \|\sum_{i=1}^{N}Y_{i}^{2}\Big\|^2_{L^2}+\frac{C}{N}\sum_{i,j=1}^N\|\mathscr{D}^{-1}P_i\|_{L^4}^2\|\mathscr{D}^{-1}P_j\|^2_{L^4}
	\\
	&\leq \frac{C}{N}\Big \|\sum_{i=1}^{N}Y_{i}^{2}\Big\|^2_{L^2}(1+\l^2 R_N^1)^2,
\end{align*}
	where we used Sobolev embedding  $\|f\|_{L^4}\lesssim \|f\|_{H^{1-2\kappa}}$ for $\kappa>0$ small enough and
	\begin{equation}\label{yi}\Big\|\sum_{i=1}^{N}Y_{i}^{2}\Big\|^2_{L^2}\geq \Big(\sum_{i=1}^{N}\|Y_{i}\|^2_{L^2}\Big)^2.\end{equation}
\end{proof}

The main result of this section is given as follows, which will be used in Section~\ref{sec:Con}.

\bt\label{Y:T4} The following holds with constant $C$ independent of  $N$, $\l$ and $m\geq1$:
\begin{align*}
&\Big(\sum_{j=1}^N\|Y_{j}(T)\|_{L^2}^2\Big)+\frac12\sum_{j=1}^N\|\nabla  \varphi_{j}\|_{L_T^2L^2}^2+m\sum_{j=1}^N\|Y_{j}\|_{L_T^2L^2}^2
\\
&\qquad +\frac{1}{8}\sum_{j=1}^N\| Y_{j}\|_{L_T^2H^{1-2\kappa}}^2+\frac{\l}{N}\Big\|\sum_{i=1}^NY_{i}^2\Big\|_{L^2_TL^2}^2
\\
& \leq \Big(\sum_{j=1}^N\|Y_j(0)\|_{L^2}^2\Big)+\l(1+\lambda) Q_N^4+\sum_{j=1}^N\|Y_{j}\|_{L_T^2L^2}^2
\\&\qquad+C\l(1+\l^{55})\int_0^T\Big(\sum_{i=1}^N\|Y_i\|_{L^2}^2\Big)(R_N^1+R_N^2+Q_N^3)\dif s\;.
\end{align*}
\et


\begin{proof}
Integrating the energy equality in Lemma~\ref{den} over time,
	and using Lemma \ref{lem:zz2} and \eqref{zmm4}, $Q_N^3\geq (R_N^1)^{\frac1{1-2\kappa}}$, we deduce
	\begin{align*}
	&\Big(\sum_{j=1}^N\|Y_{j}(T)\|_{L^2}^2\Big)+2\sum_{j=1}^N\|\nabla  \varphi_{j}\|_{L_T^2L^2}^2+m\sum_{j=1}^N\|Y_{j}\|_{L_T^2L^2}^2
	+\frac{2\lambda}{N}\Big\|\sum_{i=1}^NY_{i}^2\Big\|_{L^2_TL^2}^2
	\\
	& \leq \delta\Big(\sum_{j=1}^N\|\nabla  \varphi_{j}\|_{L_T^2L^2}^2+\sum_{j=1}^N\| Y_{j}\|_{L_T^2H^{1-2\kappa}}^2+\frac{\lambda}{N}\Big\|\sum_{i=1}^NY_{i}^2\Big\|_{L^2_TL^2}^2\Big)
	\\&\qquad+ \Big(\sum_{j=1}^N\|Y_j(0)\|_{L^2}^2\Big)+C_\delta\l(1+\l^{55})\int_0^T\Big(\sum_{i=1}^N\|Y_i\|_{L^2}^2\Big)(R_N^1+R_N^2+Q_N^3)\dif s
\\&\qquad+\l(1+\l) Q_N^4,
	\end{align*}
for some $\delta>0$ small enough.
Applying \eqref{e:3.2.1}  
to $\sum_{j=1}^N\| Y_{j}\|_{L_T^2H^{1-2\kappa}}^2$
 the result follows.
\end{proof}

Furthermore, using the dissipation effect from the term $\frac{1}{N}\|\sum_{i=1}^NY_{i}^2\|_{L^2_TL^2}^2$,  the empirical averages of the $L^2$ norms of $Y_i$ can be controlled pathwise in terms of the averages of the renormalized terms $Q(\mathbb{Z})$ with finite moment, as stated in the following theorem (which will be used in Sec~\ref{sec:Tightness}).

\bt\label{Y:T4a} The following bound holds
with  $\E Q(\mathbb{Z})\lesssim C(\l)$ where $C(\l)$ is independent of $N$
\minilab{e:3.4}
\begin{equs}
\sup_{t\in[0,T]}\frac{1}{N}\sum_{j=1}^N\|Y_{j}(t)\|_{L^2}^2
&+\frac{m}{N}\sum_{j=1}^N\| Y_{j}\|_{L_T^2L^2}^2
+\frac{1}{2N}\sum_{j=1}^N\|\nabla  \varphi_{j}\|_{L^2_TL^2}^2
+\frac{1}{8N}\sum_{j=1}^N\| Y_{j}\|_{L_T^2H^{1-2\kappa}}^2
			\label{e:3.4.1}
\\
&
+\frac{\l}{2N^2}\Big\|\sum_{i=1}^NY_{i}^2\Big\|_{L^2_TL^2}^2
\;
\leq 
Q(\mathbb{Z})+\frac{2}{N}\sum_{i=1}^N\|Y_i(0)\|_{L^2}^2.
			\label{e:3.4.2}
\end{equs}
\et
\begin{proof}
Similarly as in the proof of Theorem \ref{Y:T4} we have
\begin{align*}
&\eqref{e:3.4.1}
+\frac{\lambda}{N^2}\Big\|\sum_{i=1}^NY_{i}^2\Big\|_{L^2_TL^2}^2
 \leq 
\frac2N\sum_{j=1}^N\|Y_j(0)\|_{L^2}^2
+\l(1+\l) Q_N^4
\\
&+C\l(1+\l^{55})\int_0^T\Big(\frac1N\sum_{i=1}^N\|Y_i\|_{L^2}^2\Big)(R_N^1+R_N^2+Q_N^3)\dif s
+\frac1N\sum_{j=1}^N\|Y_{j}\|_{L_T^2L^2}^2
\end{align*}
where the last term is bounded by
 $\frac{\l}{4N^2}\|\sum_{i=1}^NY_{i}^2\|_{L_T^2L^2}^2+C(\l)$.
Using
$$\frac{1}{N^2}\Big\|\sum_{i=1}^NY_{i}^2\Big\|_{L^2}^2\geq \Big(\frac{1}{N}\sum_{i=1}^N\|Y_i\|_{L^2}^2\Big)^2,$$
Young's inequality and $\E (Q_N^3)^2\lesssim C(\l)$ from Lemma \ref{lem:zmm} below, $\E(R_N^1)^2+\E(R_N^2)^2\lesssim1$ from Lemma \ref{thm:renorm43}, and moment bound  \eqref{e:Q4} for $Q^4_N$ below, the result follows.
\end{proof}

\subsection{Proof of \eqref{zmm4}}\label{sec:4.2}
 We first consider the easier part $\Theta$ defined in Lemma \ref{den}.
\bp\label{theta1} It holds  for $\delta>0$ small that
\begin{align}\label{zmm5}
|\Theta|\leq 
\delta \sum_{i=1}^{N}\|Y_{i}\|_{H^{1-2\kappa}}^{2}+C\l(1+\l^4)\Big(\sum_{i=1}^N\|Y_i\|^{2}_{L^2}\Big)R_N^2.
\end{align}
Here $C$ is independent of $\l, N$ and $m\geq1$, and for $\theta=\frac{1+2\kappa}{2-4\kappa}$ we define
$$
\aligned R_N^2
&\eqdef 1+\Big(\frac{1}{N^2}\sum_{i,j=1}^N\|{\cZ}^{\<2>}_{ij}\|^{\frac2{1-\theta}}_{\bC^{-1-\kappa}}\Big)
+\Big(\frac{1}{N}\sum_{j=1}^N\|\CZ^{\<2>}_{jj}\|_{\bC^{-1-\kappa}}^{\frac2{1-\theta}}\Big)
\\
&\qquad +\frac{1}{N^3}\sum_{i,j,l=1}^N\Big(\| \tilde{\cZ}^{\<22>}_{il,ij}\|^{2}_{\bC^{-\kappa}}
+\| \tilde{\cZ}^{\<22>}_{ll,ij}\|^{2}_{\bC^{-\kappa}}
+\| \tilde{\cZ}^{\<22>}_{il,jj}\|^{2}_{\bC^{-\kappa}}\Big)
 +
 \frac{1}{N^2}\sum_{j,l=1}^N\| \tilde{\cZ}^{\<22>}_{ll,jj}\|^{2}_{\bC^{-\kappa}}.
\endaligned$$
\ep
\begin{proof}We estimate each term in $\Theta$. By Young's inequality we will use the first line in $R_N^2$ to control the renormalization terms evolving $\cZ^{\<2>}_{ij}$ below and use the second and the third line in $R_N^2$ to bound the renormalization terms containing $\tilde{\cZ}^{\<22>}_{ij,kl}$.
	
\newcounter{UnifNa1} 
\refstepcounter{UnifNa1} 
{\sc Step} \arabic{UnifNa1} \label{UnifN1a} \refstepcounter{UnifNa1} (Estimates of $D(Y_i,{\cZ}^{\<2>}_{ij},Y_j)$ and $D(Y_i,{\CZ}^{\<2>}_{jj},Y_i)$)

We first control the terms containing the commutator $D$ in $\Theta$.	 By Lemma \ref{lem:com3}, H\"older's inequality, and interpolation Lemma \ref{lem:interpolation}, we have
\begin{align}\no
&\Big|\frac{\l}{N}\sum_{i,j=1}^N2D(Y_i,{\cZ}^{\<2>}_{ij},Y_j)\Big|
\lesssim \frac{\l}{N}\sum_{i,j=1}^N\|Y_i\|_{H^{\frac12+\kappa}}\|{\cZ}^{\<2>}_{ij}\|_{\bC^{-1-\kappa}}\|Y_j\|_{H^{\frac12+\kappa}}
\\&\lesssim \l\Big(\frac{1}{N^2}\sum_{i,j=1}^N\|{\cZ}^{\<2>}_{ij}\|^2_{\bC^{-1-\kappa}}\Big)^{\frac12}\Big(\sum_{i=1}^N\|Y_i\|^2_{H^{\frac12+\kappa}}\Big)\no
\\&\lesssim \l\Big(\frac{1}{N^2}\sum_{i,j=1}^N\|{\cZ}^{\<2>}_{ij}\|^2_{\bC^{-1-\kappa}}\Big)^{\frac12}\sum_{i=1}^N\|Y_i\|^{2\theta}_{H^{1-2\kappa}}\|Y_i\|^{2(1-\theta)}_{L^2}\label{szz1}
\\&\lesssim\l\Big(\frac{1}{N^2}\sum_{i,j=1}^N\|{\cZ}^{\<2>}_{ij}\|^2_{\bC^{-1-\kappa}}\Big)^{\frac12}\Big(\sum_{i=1}^N\|Y_i\|^{2}_{H^{1-2\kappa}}\Big)^\theta
\Big(\sum_{i=1}^N\|Y_i\|^{2}_{L^2}\Big)^{1-\theta}\no
\\
&\le\delta \sum_{i=1}^{N}\|Y_{i}\|_{H^{1-2\kappa}}^{2}+C_\delta\l^{\frac{1}{1-\theta}} \Big(\sum_{i=1}^N\|Y_i\|^{2}_{L^2}\Big)
\Big(\frac{1}{N^2}\sum_{i,j=1}^N\|{\cZ}^{\<2>}_{ij}\|^2_{\bC^{-1-\kappa}}\Big)^{\frac1{2(1-\theta)}},\no
\end{align}
where $\theta=\frac{1+2\kappa}{2-4\kappa}\in(\frac12,\frac23)$. Similarly we have
\begin{align*}
&\Big|\frac{\l}{N}\sum_{i,j=1}^ND(Y_i,{\CZ}^{\<2>}_{jj},Y_i)\Big|\lesssim \frac{\l}{N}\sum_{i,j=1}^N\|Y_i\|^2_{H^{\frac12+\kappa}}\|\CZ^{\<2>}_{jj}\|_{\bC^{-1-\kappa}}
\\
&\le\delta\sum_{i=1}^{N}\|Y_{i}\|_{H^{1-2\kappa}}^{2}+C_\delta \l^{\frac{1}{1-\theta}}\Big(\sum_{i=1}^N\|Y_i\|^{2}_{L^2}\Big)
\Big(\frac{1}{N}\sum_{j=1}^N\|\CZ^{\<2>}_{jj}\|_{\bC^{-1-\kappa}}\Big)^{
\frac1{1-\theta}}.
\end{align*}
Therefore the terms containing $D$ in $\Theta$ can be controlled by the right hand side of \eqref{zmm5}.

{\sc Step} \arabic{UnifNa1} \label{UnifN2a} \refstepcounter{UnifNa1} (Estimate of terms in $\la \sD^{-1}P_i,P_i\ra$)

In the following we estimate each term in \eqref{eqPi}. We have three types of terms:

\textbf{I.} Terms without $C$ or $D$ such as $\langle \tilde{\cZ}^{\<22>}_{il,ij},4Y_lY_j\rangle$,

\textbf{II.} Terms with $C$, such as $\langle C(Y_i,\cZ^{\<2>}_{ll},{\cZ}^{\<2>}_{ij}),Y_j\rangle$,

\textbf{III.} Terms with $D$, such as
 $D(Y_j,{\cZ}^{\<2>}_{ij},\mathscr{D}^{-1}P_i)$.

The terms having the same type can be estimated in the same way.

\textbf{I.} For the first type
we use Lemma \ref{lem:interpolation} and Lemma \ref{lem:multi} to have
\begin{align*}
&\Big|\frac{\l^2}{N^2}\sum_{i,j,l=1}^N\langle \tilde{\cZ}^{\<22>}_{il,ij},4Y_lY_j\rangle\Big|
\lesssim\frac{\l^2}{N^2}\sum_{i,j,l=1}^N\| \tilde{\cZ}^{\<22>}_{il,ij}\|_{\bC^{-\kappa}}\|Y_l\|_{H^{2\kappa}}\|Y_j\|_{H^{2\kappa}}
\\&\lesssim\frac{\l^2}{N}\sum_{i=1}^N\Big(\frac{1}{N^2}\sum_{j,l=1}^N\| \tilde{\cZ}^{\<22>}_{il,ij}\|^2_{\bC^{-\kappa}}\Big)^{\frac12}\Big(\sum_{j=1}^N\|Y_j\|^2_{H^{2\kappa}}\Big)
\\
&\le\delta \sum_{i=1}^{N}\|Y_{i}\|_{H^{1-2\kappa}}^{2}+C_\delta \l^{\frac2{1-\theta_1}}\Big(\sum_{i=1}^N\|Y_i\|^{2}_{L^2}\Big)\Big(\frac{1}{N^3}\sum_{i,j,l=1}^N\| \tilde{\cZ}^{\<22>}_{il,ij}\|^{2}_{\bC^{-\kappa}}\Big)^{\frac1{2(1-\theta_1)}},
\end{align*}
where $\theta_1=\frac{2\kappa}{1-2\kappa}\in(0,1)$ and we used Lemma \ref{lem:multi} and Besov embedding $H^{2\kappa}\subset B^\kappa_{2,1} $  to have
$$\|Y_lY_j\|_{B^\kappa_{1,1}}\lesssim \|Y_l\|_{B^\kappa_{2,1}}\|Y_j\|_{B^\kappa_{2,1}}\lesssim \|Y_l\|_{H^{2\kappa}}\|Y_j\|_{H^{2\kappa}},$$
in the first inequality.
By the exactly same arguments the same bounds hold for
$$
\Big|\frac{\l^2}{N^2}\sum_{i,j,l=1}^N \langle \tilde{\cZ}^{\<22>}_{ll,ij},2Y_iY_j\rangle\Big|, \;\;
\Big|\frac{\l^2}{N^2}\sum_{i,j,l=1}^N\langle \tilde{\cZ}^{\<22>}_{il,jj},2Y_lY_i\rangle\Big|, \;\;
\Big| \frac{\l^2}{N^2}\sum_{i,j,l=1}^N\langle \tilde{\cZ}^{\<22>}_{ll,jj},Y_i^2\rangle\Big|
$$
with $\frac{1}{N^3}\sum_{i,j,l=1}^N\| \tilde{\cZ}^{\<22>}_{il,ij}\|^{2}_{\bC^{-\kappa}}$ on the right hand side replaced by, respectively,
 $$
\frac{1}{N^3}\sum_{i,j,l=1}^N\| \tilde{\cZ}^{\<22>}_{ll,ij}\|^{2}_{\bC^{-\kappa}},\;\;\frac{1}{N^3}\sum_{i,j,l=1}^N\| \tilde{\cZ}^{\<22>}_{il,jj}\|^{2}_{\bC^{-\kappa}},\;\;\frac{1}{N^2}\sum_{j,l=1}^N\| \tilde{\cZ}^{\<22>}_{ll,jj}\|^{2}_{\bC^{-\kappa}}.
$$
By Young's inequality the terms with $\sum_{i=1}^N\|Y_i\|^{2}_{L^2}$ 
are all bounded by $\l^2(1+\l)\Big(\sum_{i=1}^N\|Y_i\|^{2}_{L^2}\Big)R_N^2$.

\textbf{II.} By Lemma \ref{lem:com2}, interpolation Lemma \ref{lem:interpolation} and H\"older inequality,
\begin{align*}
&\Big|\frac{\l^2}{N^2}\sum_{i,j,l=1}^N\langle C(Y_l,\cZ^{\<2>}_{il},{\cZ}^{\<2>}_{ij}),Y_j\rangle\Big|
\\&\lesssim \frac{\l^2}{N^2}\sum_{i,j,l=1}^N\|\cZ^{\<2>}_{il}\|_{\bC^{-1-\kappa}}\|{\cZ}^{\<2>}_{ij}\|_{\bC^{-1-\kappa}}\|Y_l\|_{H^{\frac12+\kappa}}\|Y_j\|_{H^{\frac12+\kappa}}
\\
&\le\delta\sum_{i=1}^{N}\|Y_{i}\|_{H^{1-2\kappa}}^{2}+C_\delta\l^{\frac2{1-\theta}} \Big(\sum_{i=1}^N\|Y_i\|^{2}_{L^2}\Big)\Big(\frac{1}{N^3}\sum_{i,j,l=1}^N
\|\cZ^{\<2>}_{il}\|_{\bC^{-1-\kappa}}^{\frac1{1-\theta}}\|{\cZ}^{\<2>}_{ij}\|_{C^{-1-\kappa}}^{\frac1{1-\theta}}\Big),
\end{align*}
where we use similar argument as in \eqref{szz1} and by Young's inequality the last term can be controlled by $\l(1+\l^4)\Big(\sum_{i=1}^N\|Y_i\|^{2}_{L^2}\Big)R_N^2$.

By the exactly same arguments the same bounds hold for 
\begin{equs}
\Big|\frac{\l^2}{N^2}\sum_{i,j,l=1}^N2\langle C(Y_i,\cZ^{\<2>}_{ll},{\cZ}^{\<2>}_{ij}),Y_j\rangle\Big|, \;\; 
 \Big|\frac{\l^2}{N^2}\sum_{i,j,l=1}^N2\langle C(Y_l,\cZ^{\<2>}_{il},{\cZ}^{\<2>}_{jj}),Y_i\rangle\Big|,
\\
\mbox{and }\quad
\Big|\frac{\l^2}{N^2}\sum_{i,j,l=1}^N2\langle C(Y_i,\cZ^{\<2>}_{ll},{\cZ}^{\<2>}_{jj}),Y_i\rangle\Big| 
\end{equs}
with $\|\cZ^{\<2>}_{il}\|_{\bC^{-1-\kappa}}^{\frac1{1-\theta}}\|{\cZ}^{\<2>}_{ij}\|_{C^{-1-\kappa}}^{\frac1{1-\theta}}$ on the right hand side replaced by, respectively,
$$
\|\cZ^{\<2>}_{ll}\|_{\bC^{-1-\kappa}}^{\frac1{1-\theta}}\|{\cZ}^{\<2>}_{ij}\|_{C^{-1-\kappa}}^{\frac1{1-\theta}},\;\;\|\cZ^{\<2>}_{il}\|_{\bC^{-1-\kappa}}^{\frac1{1-\theta}}\|{\cZ}^{\<2>}_{jj}\|_{C^{-1-\kappa}}^{\frac1{1-\theta}},\;\;\|\cZ^{\<2>}_{ll}\|_{\bC^{-1-\kappa}}^{\frac1{1-\theta}}\|{\cZ}^{\<2>}_{jj}\|_{C^{-1-\kappa}}^{\frac1{1-\theta}}.
$$

As above by Young's inequality the terms containing $\sum_{i=1}^N\|Y_i\|^{2}_{L^2}$ 
can be all controlled by $\l(1+\l^4)\Big(\sum_{i=1}^N\|Y_i\|^{2}_{L^2}\Big)R_N^2$.

\textbf{III.} By Lemma \ref{lem:com3}  and setting $\theta_0=\frac{4\kappa}{1-2\kappa}\in(0,1)$, we have
\begin{align*}
&\Big|\frac{\l}{N}\sum_{i,j=1}^N
-2D(Y_j,{\cZ}^{\<2>}_{ij},\mathscr{D}^{-1}P_i)\Big|
\lesssim \frac{\l}{N}\sum_{i,j=1}^N\|Y_j\|_{H^{4\kappa}}\|{\cZ}^{\<2>}_{ij}\|_{\bC^{-1-\kappa}}\|\mathscr{D}^{-1}P_i\|_{H^{1-2\kappa}}
\\&\lesssim \l\Big(\frac{1}{N^2}\sum_{i,j=1}^N\|{\cZ}^{\<2>}_{ij}\|^2_{\bC^{-1-\kappa}}\Big)^{\frac12}\Big(\sum_{j=1}^N\|Y_j\|^2_{H^{4\kappa}}\Big)^{\frac12}\Big(\sum_{i=1}^N
\|\mathscr{D}^{-1}P_i\|^2_{H^{1-2\kappa}}\Big)^{\frac12}
\\
&\lesssim \l^2\Big(\frac{1}{N^2}\sum_{i,j=1}^N\|{\cZ}^{\<2>}_{ij}\|^2_{\bC^{-1-\kappa}}\Big)\Big(\sum_{j=1}^N\|Y_j\|^{2\theta_0}_{H^{1-2\kappa}}\|Y_j\|^{2(1-\theta_0)}_{L^2}\Big)+\delta\sum_{i=1}^N\|\mathscr{D}^{-1}P_i\|^2_{H^{1-2\kappa}}
\\
&\le C_\delta\l^{\frac2{1-\theta_0}} \Big(\sum_{i=1}^N\|Y_i\|^{2}_{L^2}\Big)\Big(\frac{1}{N^2}\sum_{i,j=1}^N\|{\cZ}^{\<2>}_{ij}\|^2_{\bC^{-1-\kappa}}\Big)^{\frac1{1-\theta_0}}
+\delta\sum_{j=1}^N \Big(\|Y_j\|^{2}_{H^{1-2\kappa}}+\|\mathscr{D}^{-1}P_j\|^2_{H^{1-2\kappa}}\Big),
\end{align*}
where we used Young's inequality and interpolation Lemma \ref{lem:interpolation} in the third inequality.

Similarly we have
\begin{align*}
&\Big|\frac{\l}{N}\sum_{i,j=1}^N
2D(Y_i,\CZ^{\<2>}_{jj},\mathscr{D}^{-1}P_i)\Big|
\lesssim \frac{\l}{N}\sum_{i,j=1}^N\|Y_i\|_{H^{4\kappa}}\|\CZ^{\<2>}_{jj}\|_{\bC^{-1-\kappa}}\|\mathscr{D}^{-1}P_i\|_{H^{1-2\kappa}}
\\
&\le C_\delta\l^{\frac2{1-\theta_0}} \Big(\sum_{i=1}^N\|Y_i\|^{2}_{L^2}\Big)\Big(\frac{1}{N}\sum_{j=1}^N\|\CZ^{\<2>}_{jj}\|^2_{\bC^{-1-\kappa}}\Big)^{\frac1{1-\theta_0}}
\! +\delta\sum_{j=1}^N\Big(\|Y_j\|^{2}_{H^{1-2\kappa}}
\!+\|\mathscr{D}^{-1}P_j\|^2_{H^{1-2\kappa}}\Big),
\end{align*}
which implies the result by the first two bounds in Lemma \ref{lem:phi1}.
\end{proof}

For $\Xi$, since we will use Lemma \ref{lem:X3} to bound the $L_T^2$ norm of the stochastic terms, we bound $\|\Xi\|_{L_T^1}$.

\bp\label{Xi1}
It holds  for $\delta>0$ small that
\begin{align*}
\int_0^T|\Xi|\dif s
& \leq \delta\sum_{i=1}^{N}\|\nabla \varphi_{i}\|_{L_T^2L^{2}}^{2}+\delta\frac{\l}{N}\Big \|\sum_{i=1}^{N}Y_{i}^{2}\Big\|^2_{L^2_TL^2}+\delta \sum_{i=1}^{N}\|Y_{i}\|_{L_T^2H^{1-2\kappa}}^{2}
\\
&\qquad+C\l(1+\l^{55})\int_0^T\Big(\sum_{i=1}^N\|Y_i\|^{2}_{L^2}\Big)Q_N^3\dif s+\l(1+\l) Q_N^4.
\end{align*}
Here $C$ is independent of $\l, N$ and can be chosen uniform for $m\geq1$ and
\begin{align*}
Q_N^3 &\eqdef R_N^4+1+\Big(\frac{1}{N}\sum_{j=1}^N\|X_j\|_{C_T\bC^{\frac{1}{2}-2\kappa}}^2\Big)^2+\Big(\frac{1}{N^2}\sum_{i,j=1}^N\|X_jZ_i\|^2_{C_T\bC^{-\frac12-\kappa}}\Big)
\\&\qquad+\Big(\frac{1}{N}\sum_{j=1}^N\|X_jZ_j\|^2_{C_T\bC^{-\frac12-\kappa}}\Big)
+\frac{1}{N^2}\sum_{i,j=1}^N\Big(\|{\cZ}^{\<2>}_{ij}\|_{C_T\bC^{-1-\kappa}}^{\frac{1}{1-\theta}}+\|{\CZ}^{\<2>}_{jj}\|_{C_T\bC^{-1-\kappa}}^{\frac{1}{1-\theta}}\Big),
\end{align*}
with $\theta=\frac{\frac12+\kappa}{1-2\kappa}$ and
\begin{align}
Q_N^4 &\eqdef
\Big(\sum_{i=1}^N\|X_i\|_{L_T^2H^{\frac{1}{2}-2\kappa}}^2\Big)
\Big[1+\frac{1}{N^2}\sum_{i,j=1}^N 2^{8\kappa L} \Big( \|{\cZ}^{\<2>}_{ij}\|^2_{C_T\bC^{-1-\kappa}}+\|{\CZ}^{\<2>}_{jj}\|^2_{C_T\bC^{-1-\kappa}}\Big)	\notag
\\
&\qquad\qquad\qquad\qquad\qquad
+ \frac{1}{N^2}\sum_{i,j=1}^N
\Big(
\|{\cZ}^{\<2>}_{ij}\|_{C_T\bC^{-1-\kappa}}^2
+ \|{\CZ}^{\<2>}_{jj}\|_{C_T\bC^{-1-\kappa}}^2\Big)\Big] 	\notag
\\
&+\Big(\frac{1}{N^2}\sum_{i=1}^N\Big(\sum_{j=1}^N\|X_j^2Z_i\|_{L^2_TH^{-\frac12-2\kappa}}\Big)^{2}\Big)
+\Big(\frac{1}{N^2}\sum_{i=1}^N\Big(\sum_{j=1}^N\|X_iX_jZ_j\|_{L^2_TH^{-\frac12-2\kappa}}\Big)^{2}\Big)
\notag\\
&+\Big(\frac{1}{N^2}\sum_{i=1}^N\Big(\sum_{j=1}^N\big(
	\|X_i\circ \CZ^{\<2>}_{jj}\|_{L^2_TH^{-\frac12-2\kappa}}
	+\|X_j\circ{\cZ}^{\<2>}_{ij}\|_{L^2_TH^{-\frac12-2\kappa}}\big)
	\Big)^{2}\Big)
 \label{zrx1}
\end{align}
where  $L$ is chosen as in \eqref{L}  and $R_N^4$ is defined in Lemma \ref{Cubic}.
\ep
Note that with all the estimates obtained above we can easily deduce for any $q\geq1$
\begin{equ}[e:Q4]
[\E (Q_N^4)^q]^{1/q}\lesssim \l^2(1+\l^6).
\end{equ}
Indeed, using Lemmas \ref{lem:X2} and \ref{lem:X3} we can bound all the terms in $Q^4_N$ that are $L_T^2$ norms and involving $X$.
These bounds together with Lemma~\ref{thm:renorm43}  and  the definition of $L$ in \eqref{L} imply \eqref{e:Q4}.

In the following lemma we use  the renormalized terms in $\mathbb{Z}$ to bound $Q_N^3$,
which will be useful in Section \ref{sec:inv} because,
$Q_N^3$ involves $(X_i)_{i=1}^N$ which are not independent for different $i$
whereas for $Q^5_N$ below it will be easier to exploit independence.

\bl\label{lem:zmm} 
One has
$Q_N^3\lesssim (1+\l^4)Q_N^5$
uniformly  in $\l$ and $N$,
with
\begin{align}\label{eq:Q}
Q_N^5\eqdef R_N^4+&\frac{1}{N^2}\sum_{i,j=1}^N(\|{\cZ}^{\<2>}_{ij}\|_{C_T\bC^{-1-\kappa}}^{\frac{1}{1-\theta}}+\|{\CZ}^{\<2>}_{jj}\|_{C_T\bC^{-1-\kappa}}^{\frac{1}{1-\theta}})
+\sum_{i=1}^3Q_N^{5i},
\end{align}
where $R_N^4$ is defined in Lemma \ref{Cubic},
 $Q_N^{51}$ is defined in the proof and $Q_N^{52}, Q_N^{53}$ are defined in the proof of Lemma \ref{lem:X1}.  
\el
\begin{proof}
In the proof we bound each term in $Q_N^3$ in terms of the renormalized terms in $\mathbb{Z}$.
Obviously nothing needs to be done for the term $R_N^4$ in $Q_N^3$.
By \eqref{bdx} in Lemma~\ref{lem:X} 
\begin{align*}
\Big(\frac{1}{N}\sum_{j=1}^N\|X_j\|_{C_T\bC^{\frac{1}{2}-2\kappa}}^2\Big)^2
\lesssim\frac{\l^4}{N^2}\sum_{i,j=1}^N\|\tilde{\cZ}^{\<30>}_{ijj}\|_{C_T\bC^{\frac{1}{2}-2\kappa}}^4\eqdef \l^4Q_N^{51}.
\end{align*}
Thus the result follows from Lemma \ref{lem:X1}.
\end{proof}

Now we focus on the estimate of $\Xi$. First we consider the cubic term $\la  Y_j^2Y_i,Z_i \ra$ in $\Xi$. Comparing to the dynamical $\Phi^4_3$ model, the dissipation from $\frac{1}{N} \|\sum_{i=1}^{N}Y_{i}^{2}\|^2_{L^2}$ is weaker, which requires further decomposition and more delicate estimates. Also unlike 2D case, the best regularity for $Y_i$ is $H^{1-}$ ($H^{1}$ in 2D case). Here we decompose $Y_i$ as $\varphi_i$ (having better regularity) and $\mathcal{D}^{-1}P_i$ (bound of which only need $L^2$-norm of $Y_i$, see Lemma \ref{lem:phi1}). For the most complicated terms (see Step 3 in the following proof) we also need to decompose $Z_i$ by localization operator and choosing $L$ to balance the competing contributions.

\bl\label{Cubic} It holds for $\delta>0$ small that
\begin{equation}\aligned\label{eq:Aa}
\Big|\frac{\l}{N}\sum_{i,j=1}^N\la  Y_j^2Y_i,Z_i \ra \Big|
&\leq \delta\sum_{i=1}^{N}\|\varphi_{i}\|_{H^1}^{2}+\delta\frac{\l}{N}\Big \|\sum_{i=1}^{N}Y_{i}^{2}\Big\|^2_{L^2}+\delta \sum_{i=1}^{N}\|Y_{i}\|_{H^{1-2\kappa}}^{2}
\\
&\quad+C_\delta \l(1+\l^{55}) \Big (\sum_{j=1}^{N}\|Y_{j}\|_{L^{2}}^{2} \Big )R_N^4,
 \endaligned\end{equation}
where  $C_\delta$ is independent of $\l, N$ and can be chosen uniform for $m\geq1$ and
$$\aligned R_N^4=(R_N^1)^{25} \Big (\frac{1}{N}\sum_{j=1}^{N}\|Z_j\|_{\bC^{-\frac12-\kappa}}^{2}+1  \Big )^{9}.\endaligned$$
\el
\begin{proof}

 We use \eqref{phi} to get a decomposition for $Y_i$ and we have
\begin{align*}
\frac{\l}{N}\sum_{i,j=1}^N\la  Y_j^2Y_i,Z_i \ra
&=\frac{\l}{N}\sum_{i,j=1}^N\bigg[\la  \varphi_j^2\varphi_i, Z_i \ra +\la  [\mathscr{D}^{-1}P_j]^2Y_i,Z_i \ra -2\la  [\mathscr{D}^{-1}P_j]\varphi_j\varphi_i,Z_i \ra
\\&\qquad\qquad-\la  \varphi_j^2[\mathscr{D}^{-1}P_i],Z_i \ra +2\la  [\mathscr{D}^{-1}P_j]\varphi_j[\mathscr{D}^{-1}P_i],Z_i \ra \bigg].
\end{align*}
In the following we show that each term can be bounded by the RHS of \eqref{eq:Aa}.
\newcounter{UnifNa} 
\refstepcounter{UnifNa} 

{\sc Step} \arabic{UnifNa} \label{UnifN1a} \refstepcounter{UnifNa} (Estimate of $\la  \varphi_j^2\varphi_i, Z_i \ra $.)

In this step we use Lemma \ref{lem:dual+MW} with $s=\frac12+\kappa$ to have
\begin{align}
&\Big|\frac{\l}{N}\sum_{i,j=1}^N\la  \varphi_j^2\varphi_i,Z_i \ra\Big| =\Big|\frac{\l}{N}\sum_{i,j=1}^N\la  \varphi_i^2\varphi_j,Z_j \ra\Big| \nonumber
\\
&\lesssim\frac{\l}{N}\sum_{i,j=1}^{N} \big ( \|\nabla(\varphi_{i}^2\varphi_{j})\|_{L^{1}}^{\frac{1}{2}+\kappa} \|\varphi_{i}^{2} \varphi_{j}\|_{L^{1}}^{\frac{1}{2}-\kappa}+\|\varphi_{i}^{2} \varphi_{j}\|_{L^{1}}\big )\|Z_j\|_{\bC^{-\frac12-\kappa}}			\label{eq:a}
\\
&\lesssim\frac{\l}{N}\sum_{j=1}^{N}\Big[ \Big ( \sum_{i=1}^N\|\nabla(\varphi_{i}^2\varphi_{j})\|_{L^{1}}\Big)^{\frac{1}{2}+\kappa} \Big(\sum_{i=1}^N\|\varphi_{i}^{2} \varphi_{j}\|_{L^{1}}\Big)^{\frac{1}{2}-\kappa}+\sum_{i=1}^N\|\varphi_{i}^{2} \varphi_{j}\|_{L^{1}}\Big ]\|Z_j\|_{\bC^{-\frac12-\kappa}}   \nonumber
\end{align}
where we used H\"older's inequality for the summation in $i$ in the last step.
By H\"older's inequality it holds that
\begin{align}
\sum_{i=1}^N\|\varphi_{i}^{2} \varphi_{j}\|_{L^{1}}\leq \sum_{i=1}^N\|\varphi_i\varphi_j\|_{L^2}\|\varphi_i\|_{L^2}
\leq \Big(\sum_{i=1}^N\|\varphi_i\varphi_j\|_{L^2}^2\Big)^{\frac12}\Big(\sum_{i=1}^N\|\varphi_i\|_{L^2}^2\Big)^{\frac12},\label{eq:a2}
\end{align}
and
\begin{align}
\sum_{i=1}^N\|\nabla(\varphi_{i}^{2} \varphi_{j})\|_{L^{1}}
&\lesssim \sum_{i=1}^N\|\varphi_{i}\varphi_{j}\nabla \varphi_{i}\|_{L^{1}}+\Big\|\sum_{i=1}^N\varphi_{i}^{2}\nabla \varphi_{j}\Big\|_{L^{1}} \label{eq:a1}
\\
& \lesssim \Big(\sum_{i=1}^N\|\varphi_i\varphi_j\|_{L^2}^2\Big)^{\frac12}\Big(\sum_{i=1}^N\|\varphi_i\|_{H^1}^2\Big)^{\frac12}+\Big\|\sum_{i=1}^N\varphi_{i}^{2}\Big\|_{L^{2}}\|\nabla \varphi_{j}\|_{L^2}.\nonumber
\end{align}
Substituting \eqref{eq:a2}--\eqref{eq:a1}   into \eqref{eq:a} we obtain
that \eqref{eq:a}
 is bounded by
\begin{align*}
&\frac{\l}{N}\sum_{j=1}^{N} \Big(\sum_{i=1}^N\|\varphi_i\varphi_j\|_{L^2}^2\Big)^{\frac{1}{2}} \Big(\sum_{i=1}^N\|\varphi_{i}\|_{H^1}^2\Big)^{\frac{1+2\kappa}{4}}
\Big(\sum_{i=1}^N\|\varphi_{i}\|_{L^2}^2\Big)^{\frac{1-2\kappa}{4}}\|Z_j\|_{\bC^{-\frac12-\kappa}}
\\&+\frac{\l}{N}\sum_{j=1}^{N} \Big\|\sum_{i=1}^N\varphi_i^2\Big\|_{L^2}^{\frac{1}{2}+\kappa} \|\nabla \varphi_j\|_{L^2}^{\frac{1}{2}+\kappa}
\Big(\sum_{i=1}^N\|\varphi_i\varphi_j\|_{L^2}^2\Big)^{\frac{1-2\kappa}4}\Big(\sum_{i=1}^N\|\varphi_i\|_{L^2}^2\Big)^{\frac{1-2\kappa}4}
\|Z_j\|_{\bC^{-\frac12-\kappa}}\\&
+\frac{\l}{N}\sum_{j=1}^{N}\Big(\sum_{i=1}^N\|\varphi_i\varphi_j\|_{L^2}^2\Big)^{\frac12}\Big(\sum_{i=1}^N\|\varphi_i\|_{L^2}^2\Big)^{\frac12}
\|Z_j\|_{\bC^{-\frac12-\kappa}},\end{align*}
which by H\"older's inequality for the summation in $j$ is bounded by
\begin{align*}
&\l\Big(\frac{1}{N}\sum_{i,j=1}^N\|\varphi_i\varphi_j\|_{L^2}^2\Big)^{\frac{1}{2}} \Big(\sum_{i=1}^N\|\varphi_{i}\|_{H^1}^2\Big)^{\frac{1+2\kappa}{4}}
\Big(\sum_{i=1}^N\|\varphi_{i}\|_{L^2}^2\Big)^{\frac{1-2\kappa}{4}}\Big(\frac{1}{N}\sum_{j=1}^{N} \|Z_j\|_{\bC^{-\frac12-\kappa}}^2\Big)^{\frac12}
\\&
+\l\Big(\frac{1}{N}\sum_{i,j=1}^N\|\varphi_i\varphi_j\|_{L^2}^2\Big)^{\frac12}\Big(\sum_{i=1}^N\|\varphi_i\|_{L^2}^2\Big)^{\frac12}
\Big(\frac{1}{N}\sum_{j=1}^{N}\|Z_j\|_{\bC^{-\frac12-\kappa}}^2\Big)^{\frac12}.
\end{align*}
Using Lemma \ref{lem:phi1}, we can
bound
$\sum_{i=1}^N\|\varphi_i\|_{L^2}^2$ and $\frac{1}{N}\sum_{i,j=1}^N\|\varphi_i\varphi_j\|_{L^2}^2$,
which gives the desired bound  \eqref{eq:Aa} for  \eqref{eq:a} by H\"older's inequality, namely,  the second term in the last expression is bounded by $C$ times
\begin{equs}
 {}&\l\Big(\frac{1}N\|\sum_{i=1}^NY_i^2\|_{L^2}^2 \Big)^{\frac12}
\Big(\sum_{i=1}^N\|Y_i\|_{L^2}^2 \Big)^{\frac12}
\Big(\frac{1}{N}\sum_{j=1}^{N}\|Z_j\|_{\bC^{-\frac12-\kappa}}^2\Big)^{\frac12}
(1+\l^2 R_N^1)^{3/2}
\\
&\leq
\frac{\delta\l}N\|\sum_{i=1}^NY_i^2\|_{L^2}^2
 +
C_\delta\l(1+\l^2)^3 \Big(\sum_{i=1}^N\|Y_i\|_{L^2}^2 \Big)
\Big(\frac{1}{N}\sum_{j=1}^{N}\|Z_j\|_{\bC^{-\frac12-\kappa}}^2\Big)
( R_N^1)^3,
\end{equs}
and the first term is bounded in the same way.

\vspace{2ex}

{\sc Step} \arabic{UnifNa} \label{UnifN1a1} \refstepcounter{UnifNa} (Estimates of $\la  [\mathscr{D}^{-1}P_j]^2Y_i, Z_i \ra $ and
$\la  [\mathscr{D}^{-1}P_j]\varphi_j[\mathscr{D}^{-1}P_i], Z_i \ra $.)

We  use  \eqref{ine:Lei} and Lemma \ref{lem:emb} and Sobolev embedding $H^{\frac34}\subset L^4$ to have
\begin{align}\label{zmm01}
\|fgh\|_{B^{\frac12+\kappa}_{1,1}}\lesssim \|fg\|_{L^2}\|h\|_{H^{\frac34}}+
\|fg\|_{B^{\frac12+\kappa}_{\frac43,1}}\|h\|_{L^4}\lesssim
\|f\|_{H^{1-2\kappa}}\|g\|_{H^{1-2\kappa}}\|h\|_{H^{\frac34}},
\end{align}
which combined with (ii) in Lemma \ref{lem:multi} gives
\begin{align}\label{nL2}
&\Big|\frac{\l}{N}\sum_{i,j=1}^N\la  [\mathscr{D}^{-1}P_j]^2Y_i, Z_i \ra \Big|\nonumber\lesssim
\frac{\l}{N}\sum_{i,j=1}^N\|[\mathscr{D}^{-1}P_j]^2Y_i\|_{B^{\frac12+\kappa}_{1,1}}\|Z_i\|_{\bC^{-\frac12-\kappa}}
\nonumber
\\
&\lesssim
\frac{\l}{N}\sum_{i,j=1}^N\|\mathscr{D}^{-1}P_j\|_{H^{1-2\kappa}}^2\|Y_i\|_{H^{\frac34}}\|Z_i\|_{\bC^{-\frac12-\kappa}}
\nonumber\\
&\lesssim
\frac{\l}{\sqrt{N}}\Big (\sum_{j=1}^{N}\|Y_{j}\|_{L^{2}}^{2} \Big )\l^2R_N^1\Big (\sum_{j=1}^{N}\| Y_{j}\|_{H^{1-2\kappa}}^{2} \Big )^{\frac{\theta}2}\Big (\sum_{j=1}^{N}\| Y_{j}\|_{L^2}^{2} \Big )^{\frac{1-\theta}2} \Big (\frac{1}{N}\sum_{j=1}^{N}\|Z_j\|_{\bC^{-\frac12-\kappa}}^{2}  \bigg )^{\frac12}\nonumber
\\
&\le C_\delta \l^{\frac5{1-\theta}}
(1+R_N^1)^{\frac2{1-\theta}} \Big (1+\frac{1}{N}\sum_{j=1}^{N}\|Z_j\|_{\bC^{-\frac12-\kappa}}^{2}  \Big )^{\frac1{1-\theta}}\Big (\sum_{j=1}^{N}\|Y_{j}\|_{L^{2}}^{2} \Big )\nonumber
\\
&\qquad\qquad +\delta \l \Big \|\frac{1}{\sqrt{N}}\sum_{i=1}^{N}Y_{i}^{2} \Big \|_{L^{2}}^{2}+\delta\Big (\sum_{j=1}^{N}\| Y_{j}\|_{H^{1-2\kappa}}^{2} \Big ),
\end{align}
where $\theta=\frac{3}{4(1-2\kappa)}$ and we used Lemma \ref{lem:phi1} to bound $\|\mathscr{D}^{-1}P_j\|_{H^{1-2\kappa}}^2$ in the third inequality and \eqref{yi} in the last inequality.
Similarly we use \eqref{zmm01} and Lemma \ref{lem:phi1} to have
\begin{align}\label{nL3}
&\Big|\frac{\l}{N}\sum_{i,j=1}^N\la  [\mathscr{D}^{-1}P_j]\varphi_j[\mathscr{D}^{-1}P_i], Z_i \ra \Big|\nonumber
\\&\lesssim \l
 \bigg \|\frac{1}{\sqrt{N}}\sum_{i=1}^{N}Y_{i}^{2} \bigg \|_{L^{2}}\l^2 R_N^1\bigg (\sum_{j=1}^{N}\| \varphi_{j}\|_{H^{1}}^{2} \bigg )^{\frac{\theta}2}\bigg ( \sum_{j=1}^{N}\| \varphi_{j}\|_{L^2}^{2} \bigg )^{\frac{1-\theta}2} \bigg (\frac{1}{N}\sum_{j=1}^{N}\|Z_j\|_{\bC^{-\frac12-\kappa}}^{2}  \bigg )^{\frac12}\nonumber
 \\
& \le
C_\delta \l^{\frac5{1-\theta}}(1+\l^2)(R_N^1)^{\frac{3-\theta}{1-\theta}} \Big (\frac{1}{N}\sum_{j=1}^{N}\|Z_j\|_{\bC^{-\frac12-\kappa}}^{2}  \Big )^{\frac1{1-\theta}}\Big (\sum_{j=1}^{N}\|Y_{j}\|_{L^{2}}^{2} \Big )\nonumber
\\&\qquad\qquad\qquad+\delta\l  \Big \|\frac{1}{\sqrt{N}}\sum_{i=1}^{N}Y_{i}^{2} \Big \|_{L^{2}}^{2}+\delta\Big (\frac{1}{N} \sum_{j=1}^{N}\| \varphi_{j}\|_{H^{1}}^{2} \Big ).
\end{align}

{\sc Step} \arabic{UnifNa} \label{UnifN4} \refstepcounter{UnifNa} (Estimates for $\la  [\mathscr{D}^{-1}P_j]\varphi_j\varphi_i,Z_i \ra $ and $\la  [\mathscr{D}^{-1}P_i]\varphi^2_j, Z_i \ra $)

For the last two terms we use the localization operator $\UU_>$ and $\UU_\leq$ introduced in \eqref{zmm02} to separate $Z_i=\UU_>Z_i+\UU_\leq Z_i$ with $L$ chosen below.
By the definition of $\UU_>$ and $\UU_\leq$ we know  
\begin{align}\label{zmm03}
\|\UU_>Z_i\|_{\bC^{-1+3\kappa}}\lesssim \|Z_i\|_{\bC^{-\frac12-\kappa}}2^{(-\frac12+4\kappa) L},\quad
\|\UU_\leq Z_i\|_{L^\infty}\lesssim \|Z_i\|_{\bC^{-\frac12-\kappa}}2^{(\frac12+2\kappa) L},
\end{align}
Using (ii) in Lemma \ref{lem:multi} followed by \eqref{ine:Lei} and Lemma \ref{lem:emb} and $H^{1-2\kappa}\subset H^{\frac34}\subset L^4$,  we find
\begin{align}
&\Big|\frac{\l}{N}\sum_{i,j=1}^N\la  [\mathscr{D}^{-1}P_j]\varphi_j\varphi_i,\UU_>Z_i \ra\Big|
\lesssim\frac{\l}{N}\sum_{i,j=1}^{N} \|\varphi_{i}\varphi_{j}\|_{B^{1-\kappa}_{\frac43,2}}\|\mathscr{D}^{-1}P_j\|_{H^{1-2\kappa}}\|\UU_>Z_i\|_{\bC^{-1+3\kappa}} \nonumber
\\
&\lesssim
\l\Big (\sum_{i=1}^{N}  \|\varphi_{i}\|_{H^1}^2\Big)^{\frac12}\Big(\frac{1}{N}\sum_{i=1}^{N} \| \varphi_{i}\|_{H^{\frac34}}^{2}  \Big )^{\frac12}\Big (\frac{1}{N}\sum_{i,j=1}^{N} \|\UU_>Z_i\|^2_{\bC^{-1+3\kappa}}\|\mathscr{D}^{-1}P_j\|^2_{H^{1-2\kappa}}\Big )^{\frac12} \nonumber
\\
&\lesssim
\l(\frac{1}{\sqrt{N}})^{\frac38}
\Big \|\frac{1}{\sqrt{N}}\sum_{i=1}^{N}Y_{i}^{2} \Big \|_{L^{2}}^{\frac58}
\Big (\sum_{j=1}^{N}\| \varphi_{j}\|_{H^1}^{2} \Big )^{\frac78}   \nonumber
\\
&\qquad\qquad \times \Big (\frac{1}{N}\sum_{j=1}^{N} \|Z_j\|_{\bC^{-\frac12-\kappa}}^{2}  \Big )^{\frac12}2^{(-\frac12+4\kappa) L}(1+\l^2 R_N^1)^{5/8}, \label{nL41}
\end{align}
where we used Lemma \ref{lem:interpolation}, Lemma \ref{lem:phi1} and \eqref{zmm03} in the last step.
By H\"older's inequality, \eqref{zmm03} and Lemma \ref{lem:phi1} we obtain that
\begin{align}\label{nL42}
&\Big|\frac{\l}{N}\sum_{i,j=1}^N\la  (\mathscr{D}^{-1}P_j)\varphi_j\varphi_i,\UU_\leq Z_i \ra \Big|\leq \frac{\l}{N}\sum_{i,j=1}^N\| \mathscr{D}^{-1}P_j\|_{L^2}\|\varphi_j\varphi_i\|_{L^2}\|\UU_\leq Z_i\|_{L^\infty}
\nonumber
\\
&\lesssim
 \l\Big \|\frac{1}{\sqrt{N}}\sum_{i=1}^{N}\varphi_{i}^{2} \Big \|_{L^{2}}\Big (\sum_{j=1}^{N}\| \mathscr{D}^{-1}P_j\|_{L^2}^{2} \Big )^{\frac12}\Big (\frac{1}{N}\sum_{j=1}^{N}\|Z_j\|_{\bC^{-\frac12-\kappa}}^{2}  \Big )^{\frac12}2^{(\frac12+2\kappa)L}
\\
&\lesssim
\l \Big \|\frac{1}{\sqrt{N}}\sum_{i=1}^{N}Y_{i}^{2} \Big \|_{L^{2}}\Big (\sum_{j=1}^{N}\|Y_{j}\|_{L^{2}}^{2} \Big )^{\frac12}(1+\l^2 R_N^1)^{\frac32} \Big (\frac{1}{N}\sum_{j=1}^{N}\|Z_j\|_{\bC^{-\frac12-\kappa}}^{2}  \Big )^{\frac12}2^{(\frac12+2\kappa)L}. \nonumber
\end{align}

Now we choose $L$ to  balance the competing contributions:
\begin{equation}
2^{L(-\frac12+4\kappa)}= \bigg (\sum_{j=1}^{N}\|Y_{j}\|_{L^{2}}^{2} \bigg )^{7/16}\bigg (\sum_{i=1}^{N}\| \varphi_{i}\|_{H^{1}}^{2} \bigg )^{-7/16}. \nonumber
\end{equation}
This choice of $L$ leads to
\begin{align}\label{nL4}
&\Big|\frac{\l}{N}\sum_{i,j=1}^N\la  \mathscr{D}^{-1}P_j\varphi_j\varphi_i, Z_i \ra \Big|
\lesssim
\l \Big \|\frac{1}{\sqrt{N}}\sum_{i=1}^{N}Y_{i}^{2} \Big \|_{L^{2}}
\Big (\frac{1}{N}\sum_{j=1}^{N}\|Z_j\|_{\bC^{-\frac12-\kappa}}^{2}  \Big )^{\frac12}
\nonumber
\\
&\qquad\qquad\qquad  \qquad
\times \bigg[
 \Big (\sum_{j=1}^{N}\|Y_{j}\|_{L^{2}}^{2} \Big )^{\frac12-\frac{7a}{16}}\Big (\sum_{j=1}^{N}\| \varphi_{j}\|_{H^1}^{2} \Big )^{\frac{7a}{16}}(1+\l^2 R_N^1)^{\frac32} \nonumber
\\
&\qquad\qquad\qquad\qquad\qquad
+ \Big (\sum_{j=1}^{N}\|Y_{j}\|_{L^{2}}^{2} \Big )^{\frac1{16}}\Big (\sum_{j=1}^{N}\| \varphi_{j}\|_{H^1}^{2} \Big )^{\frac7{16}}(1+\l^2 R_N^1)^{\frac58}
\bigg]\nonumber
\\&\le
C_\delta\l(1+\l^{55}) (R_N^1)^{25} \Big (1+\frac{1}{N}\sum_{j=1}^{N}\|Z_j\|_{\bC^{-\frac12-\kappa}}^{2}  \Big )^{9}\Big (\sum_{j=1}^{N}\|Y_{j}\|_{L^{2}}^{2} \Big ) \nonumber
\\
&\qquad\qquad +\delta \l \Big \|\frac{1}{\sqrt{N}}\sum_{i=1}^{N}Y_{i}^{2} \Big \|_{L^{2}}^{2}+\delta\Big ( \sum_{j=1}^{N}\| \varphi_{j}\|_{H^1}^{2} \Big),
\end{align}
where $a=\frac{\frac12+2\kappa}{\frac12-4\kappa}$ and the second line comes from \eqref{nL42} and the third line corresponds to \eqref{nL41}.

 Similar argument also implies that $|\frac{\l}{N}\sum_{i,j=1}^N\la  [\mathscr{D}^{-1}P_i]\varphi^2_j, Z_i \ra |$ can be bounded by the right hand side of \eqref{nL4}.
Combining the above estimates we obtain the result.
\end{proof}

\begin{proof}[Proof of Proposition \ref{Xi1}]
The desired estimate for $\frac{\l}{N}\sum_{i,j=1}^N\langle Y_j^2Y_i,Z_i\ra$ has been deduced in Lemma \ref{Cubic}. It remains to consider the other terms in $\Xi$.
We write the remaining terms in $\Xi$ as
 \begin{align*}
I_1^N\eqdef&-\frac{\l}{N}\sum_{i,j=1}^N\Big\langle (X_j^2+2X_jY_j)(X_i+Y_i)+Y_j^2X_i\;,\;Y_i\Big\rangle,
\\I_2^N\eqdef&-\frac{\l}{N}\sum_{i,j=1}^N\Big\langle(X_j^2+2X_jY_j)Z_i
+2(X_j+Y_j)X_iZ_j+2X_jY_iZ_j\;,\;Y_i\Big\rangle,
\\
I_3^N
\eqdef&
-\frac{\l}{N}\sum_{i,j=1}^N\Big\langle 
X_{\qj}\prec \UU_\leq{\cZ}^{\<2>}_{\qi j} 
+(X+Y)_{\qj}\succ{\cZ}^{\<2>}_{\qi j}
+X_{\qj}\circ {\cZ}^{\<2>}_{\qi j}
\;,\;Y_i\Big\rangle.
\end{align*}

\newcounter{UnifNb} 
\refstepcounter{UnifNb} 
{\sc Step} \arabic{UnifNb} \label{UnifN1b} \refstepcounter{UnifNb} (Estimate of $I_1^N$)

By symmetry with respect to $i$ and $j$ we write $I_1^N$ as
\begin{align*}
I_1^N=&-\frac{\l}{N}\sum_{i,j=1}^N\langle X_j^2X_i,Y_i\rangle
-\frac{\l}{N}\sum_{i,j=1}^N\langle X_j^2Y_i,Y_i\rangle-\frac{2\l}{N}\sum_{i,j=1}^N\langle X_jX_iY_j,Y_i\rangle\\&-\frac{3\l}{N}\sum_{i,j=1}^N\langle X_jY_j,Y_i^2\rangle\eqdef \sum_{k=1}^4I_{1k}^N,
\end{align*}
and we will prove that for every $k=1,\dots 4$,
\begin{align}
\|I_{1k}^N\|_{L_T^1}
&\le C_\delta\l \Big(\sum_{i=1}^N\|X_i\|^2_{L_T^2H^{\frac12-2\kappa}}\Big)
+\delta\l\Big \|\frac{1}{\sqrt{N}}\sum_{i=1}^NY_i^2\Big\|^2_{L_T^2L^2}
		\label{zmm011}
\\
&+C_\delta\l \Big(\frac{1}{N}\sum_{j=1}^N\| X_j\|_{C_T\bC^{\frac12-\kappa}}^2+1\Big)^2\Big(\sum_{i=1}^N\|Y_i\|^2_{L_T^2L^2}\Big).
\nonumber
\end{align}
We consider each term separately. For $I_{11}^N$,
applying H\"older's inequality we have
	\begin{align*}
\|\langle X_j^2X_i,Y_i\rangle\|_{L_T^1}\lesssim \int_0^T\|X_j\|_{L^\infty}^2\|X_i\|_{L^2}\|Y_i\|_{L^2}\dif s\lesssim \|X_j\|_{C_TL^\infty}^2\|X_i\|_{L_T^2L^2}\|Y_i\|_{L_T^2L^2}
	\end{align*}
so by embedding $\bC^{\frac12-\kappa} \subset L^\infty$ we have
\begin{align*}
\|I_{11}^N\|_{L_T^1}
&\lesssim \frac{\l}{N}\sum_{i,j=1}^N\| X_j\|_{C_T\bC^{\frac12-\kappa}}^2\|X_i\|_{L_T^2L^2}\|Y_i\|_{L_T^2L^2}
\\
&\lesssim \l\Big(\frac{1}{N}\sum_{j=1}^N\| X_j\|_{C_T\bC^{\frac12-\kappa}}^2\Big)\Big(\sum_{i=1}^N\|X_i\|^2_{L_T^2L^2}\Big)^{\frac12}\Big(\sum_{i=1}^N\|Y_i\|^2_{L_T^2L^2}\Big)^{\frac12},
\end{align*}
which by Young's inequality gives \eqref{zmm011} for $I_{11}^N$.
Applying H\"{o}lder's inequality  in a similar way as above we find
\begin{align*}
\|I_{12}^N\|_{L_T^1} & \lesssim \frac{\l}{N}\sum_{i,j=1}^N\| X_j\|_{C_T\bC^{\frac12-\kappa}}^2\|Y_i\|^2_{L_T^2L^2},
\\
\|I_{13}^N\|_{L_T^1} &\lesssim \frac{\l}{N}\sum_{i,j=1}^N\| X_j\|_{C_T\bC^{\frac12-\kappa}}\| X_i\|_{C_T\bC^{\frac12-2\kappa}}\|Y_i\|_{L_T^2L^2}\|Y_j\|_{L_T^2L^2}
\\&\lesssim \l\Big(\frac{1}{N}\sum_{j=1}^N\| X_j\|_{C_T\bC^{\frac12-\kappa}}^2\Big)\Big(\sum_{i=1}^N\|Y_i\|^2_{L_T^2L^2}\Big),
\end{align*}
which give \eqref{zmm011} for $I_{12}^N$ and $I_{13}^N$.
Moreover by H\"older's inequality we have
\begin{align*}
\|I_{14}^N\|_{L_T^1} & \lesssim \l\Big \|\frac{1}{\sqrt{N}}\sum_{i=1}^NY_i^2\Big\|_{L_T^2L^2}\Big(\sum_{j=1}^N\|Y_j\|^2_{L_T^2L^2}\Big)^{\frac12}
\Big(\frac{1}{N}\sum_{j=1}^N\|X_j\|^2_{C_T\bC^{\frac{1}{2}-\kappa}}\Big)^{\frac12}
\\ & \lesssim \delta\l \Big \|\frac{1}{\sqrt{N}}\sum_{i=1}^NY_i^2\Big\|^2_{L_T^2L^2}+\l\Big(\sum_{i=1}^N\|Y_i\|^2_{L_T^2L^2}\Big)
\Big(\frac{1}{N}\sum_{j=1}^N\|X_j\|^2_{C_T\bC^{\frac{1}{2}-\kappa}}\Big),
\end{align*}
which implies \eqref{zmm011}.

{\sc Step} \arabic{UnifNb} \label{UnifN1b} \refstepcounter{UnifNb} (Estimate of $I_2^N$)
By symmetry with respect $i$ and $j$ we write $I_2^N$ as
\begin{align*}
I_2^N=&-\frac{\l}{N}\sum_{i,j=1}^N\langle X_j^2Z_i,Y_i\rangle
-\frac{4\l}{N}\sum_{i,j=1}^N\langle X_jY_jZ_i,Y_i\rangle
-\frac{2\l}{N}\sum_{i,j=1}^N\langle X_jX_iZ_j,Y_i\rangle
\\&-\frac{2\l}{N}\sum_{i,j=1}^N\langle X_jY_iZ_j,Y_i\rangle
\eqdef \sum_{i=1}^4I_{2i}^N.
\end{align*}
In the following we show that  the $L_T^1$-norm of each term can be bounded by
\begin{align}\label{zmm011-2}
&\delta\Big(\sum_{i=1}^N\|Y_i\|^2_{L_T^2H^{1-2\kappa}}\Big)+C_\delta\l^2 Q_N^{41}
\\
&+C_\delta\l^2\Big(\sum_{j=1}^N\|Y_j\|^2_{L_T^2L^2}\Big)\Big[\Big(\frac{1}{N^2}\sum_{i,j=1}^N\|X_jZ_i\|^2_{C_T\bC^{-\frac12-\kappa}}\Big)+
\Big(\frac{1}{N}\sum_{j=1}^N\|X_jZ_j\|^2_{C_T\bC^{-\frac12-\kappa}}\Big)\Big],
\notag
\end{align}
with $Q_N^{41}$ given by the third line in the definition of $Q_N^4$ in \eqref{zrx1}.
Using (ii) in Lemma \ref{lem:multi} followed by H\"older's inequality for the summation over $i$ we have
\begin{align*}
\|I_{21}^N\|_{L_T^1}
&\lesssim \frac{\l}{N}\sum_{i,j=1}^N\|X_j^2Z_i\|_{L_T^2H^{-\frac12-2\kappa}}\|Y_i\|_{L_T^2H^{1-2\kappa}}
\\
&\lesssim\l\Big(\frac{1}{N^2}\sum_{i=1}^N
\Big(\sum_{j=1}^N\|X_j^2Z_i\|_{L^2_TH^{-\frac12-2\kappa}}\Big)^{2}\Big)^{\frac12}
\Big(\sum_{i=1}^N\|Y_i\|^2_{L_T^2H^{1-2\kappa}}\Big)^{\frac12},
\end{align*}
which by Young's inequality gives the desired bound for $\|I_{21}^N\|_{L_T^1}$. Similarly,
\begin{align*}
&\|I_{23}^N\|_{L_T^1}\lesssim\l\Big(\frac{1}{N^2}\sum_{i=1}^N
\Big(\sum_{j=1}^N\|X_jX_iZ_j\|_{L^2_TH^{-\frac12-2\kappa}}\Big)^{2}\Big)^{\frac12}
\Big(\sum_{i=1}^N\|Y_i\|^2_{L_T^2H^{1-2\kappa}}\Big)^{\frac12},
\end{align*}
which implies the bound for $\|I_{23}^N\|_{L_T^1}$.
Using (ii) in Lemma \ref{lem:multi} followed by H\"older's inequality for the summation over $i, j$ and \eqref{ine:Lei}, we deduce
\begin{align*}
\|I_{22}^N\|_{L_T^1}
&\lesssim \l \Big(\frac{1}{N^2}\sum_{i,j=1}^N\|X_jZ_i\|^2_{C_T\bC^{-\frac12-\kappa}}\Big)^{\frac12}\Big(\sum_{i,j=1}^N\|Y_iY_j\|_{L_T^1B^{\frac12+\kappa}_{1,1}}^2\Big)^{\frac12}
\\
&\lesssim \l \Big(\frac{1}{N^2}\sum_{i,j=1}^N\|X_jZ_i\|^2_{C_T\bC^{-\frac12-\kappa}}\Big)^{\frac12}\Big(\sum_{j=1}^N\|Y_i\|^2_{L_T^2H^{1-2\kappa}}\Big)^{\frac12}\Big(\sum_{i=1}^N\|Y_j\|^2_{L_T^2L^2}\Big)^{\frac12}
\end{align*}
which by Young's inequality gives the required bound for $\|I_{22}^N\|_{L_T^1}$.
Similarly we have
\begin{align*}
\|I_{24}^N\|_{L_T^1}
&\lesssim\l \Big(\frac{1}{N}\sum_{j=1}^N\|X_jZ_j\|^2_{C_T\bC^{-\frac12-\kappa}}\Big)^{\frac12}\Big(\sum_{i=1}^N\|Y_i^2\|_{L_T^1B^{\frac12+\kappa}_{1,1}}\Big)
\\
&\lesssim \l \Big(\frac{1}{N}\sum_{i,j=1}^N\|X_jZ_j\|^2_{C_T\bC^{-\frac12-\kappa}}\Big)^{\frac12}\Big(\sum_{j=1}^N\|Y_i\|^2_{L_T^2H^{1-2\kappa}}\Big)^{\frac12}\Big(\sum_{i=1}^N\|Y_i\|^2_{L_T^2L^2}\Big)^{\frac12},
\end{align*}
which implies the bound for $I_{24}^N$.


{\sc Step} \arabic{UnifNb} \label{UnifN1b} \refstepcounter{UnifNb} (Estimate of $I_3^N$)
We write $I_3^N =\sum_{i=1}^4I_{3i}^{N} $
with
\begin{align*}
I_{31}^N\eqdef&-\frac{\l}{N}\sum_{i,j=1}^N
\Big\langle X_{\qj} \prec \UU_\leq{\cZ}^{\<2>}_{\qi j}
\;,\;Y_i\Big\rangle,
\qquad
I_{32}^N
\eqdef
-\frac{\l}{N}\sum_{i,j=1}^N\Big\langle X_{\qj}\succ{\cZ}^{\<2>}_{\qi j}
\;,\;Y_i\Big\ra,
\\
I_{33}^N\eqdef&-\frac{\l}{N}\sum_{i,j=1}^N\Big\langle X_{\qj}\circ {\cZ}^{\<2>}_{\qi j}\;,\;Y_i\Big\rangle,
\qquad\qquad
I_{34}^N\eqdef
-\frac{\l}{N}\sum_{i,j=1}^N\Big\la Y_{\qj} \succ{\cZ}^{\<2>}_{\qi j} \;,\;Y_i\Big\ra.
\end{align*}
In the following we bound the $L_T^1$-norm of  each term by
\begin{align}\label{zmm011-3}
&\delta\Big(\sum_{i=1}^N\|Y_i\|^2_{L_T^2H^{1-2\kappa}}\Big)+\l^2 Q_N^{4}
\\
&+\l^{\frac1{1-\theta}}\Big(\sum_{j=1}^N\|Y_j\|^2_{L_T^2L^2}\Big)\Big(\frac{1}{N^2}\sum_{i,j=1}^N(\|{\cZ}^{\<2>}_{ij}\|_{C_T\bC^{-1-\kappa}}^{\frac1{1-\theta}}+\|{\CZ}^{\<2>}_{jj}\|_{C_T\bC^{-1-\kappa}}^{\frac1{1-\theta}})\Big) \notag
\end{align}
with $\theta=\frac{\frac12+\kappa}{1-2\kappa}$.
Recall that a product with the $[i],[j]$ notation expands into two terms.
By definition of $\UU_\leq$ we have
$$\|\UU_\leq{\cZ}^{\<2>}_{ij}\|_{C_T\bC^{-1+3\kappa}}\lesssim
2^{4\kappa L}\|{\cZ}^{\<2>}_{ij}\|_{C_T\bC^{-1-\kappa}},$$
with $L$ given in \eqref{L}, which
combined with Lemma \ref{lem:para} implies that for the first term in $I_{31}^N$
\begin{align*}
&\int_0^T\Big|\frac{\l}{N}\sum_{i,j=1}^N\langle X_j\prec \UU_\leq{\cZ}^{\<2>}_{ij} ,Y_i\rangle\Big|\dif s
\\&\lesssim \frac{\l}{N}\sum_{i,j=1}^N\| X_j\|_{L^2_TH^{\frac12-2\kappa}} \|\UU_\leq{\cZ}^{\<2>}_{ij}\|_{C_T\bC^{-1+3\kappa}}\|Y_i\|_{L^2_TH^{1-2\kappa}}
\\&\lesssim 2^{4\kappa L}\l\Big(\frac{1}{N^2}\sum_{i,j=1}^N\|{\cZ}^{\<2>}_{ij}\|^2_{C_T\bC^{-1-\kappa}}\Big)^{\frac12}\Big(\sum_{i,j=1}^N\| X_j\|^2_{L^2_TH^{\frac12-2\kappa}}\|Y_i\|^2_{H^{1-2\kappa}}\Big)^{\frac12}.
\end{align*}
Similarly, for the second term in $I_{31}^N$,
\begin{align*}
&\int_0^T\Big|\frac{\l}{N}\sum_{i,j=1}^N\langle X_i\prec \UU_\leq{\CZ}^{\<2>}_{jj},Y_i\rangle\Big|\dif s
\\
&\lesssim 2^{4\kappa L}\l\Big(\frac{1}{N}\sum_{i,j=1}^N\|{\cZ}^{\<2>}_{jj}\|_{C_T\bC^{-1-\kappa}}\Big)\Big(\sum_{i=1}^N\| X_i\|^2_{L^2_TH^{\frac12-2\kappa}}\Big)^{\frac12}\Big(\sum_{i=1}^N\|Y_i\|^2_{H^{1-2\kappa}}\Big)^{\frac12},
\end{align*}
which by Young's inequality shows that   $\|I_{31}^N\|_{L_T^1}$ is bounded by the first line of $Q_N^4$ in \eqref{zrx1} multiplied by $\l^2$ and $\delta\sum_{i=1}^N\|Y_i\|^2_{L_T^2H^{1-\kappa}}$.
Moreover, by Lemma \ref{lem:para} and Lemma \ref{lem:multi} we have
\begin{align*}
\|I_{32}^N\|_{L_T^1}
&\lesssim
\frac{\l}{N}\sum_{i,j=1}^N
\|X_{\qj}\|_{L^2_TH^{\frac12-2\kappa}}\|{\cZ}^{\<2>}_{\qi j}\|_{C_T\bC^{-1-\kappa}}
\|Y_i\|_{L^2_TH^{1-2\kappa}}
\\
&\lesssim \l\Big[\Big(\frac{1}{N^2}\sum_{i,j=1}^N\|{\cZ}^{\<2>}_{ij}\|_{C_T\bC^{-1-\kappa}}^2\Big)^{\frac12}+
\Big(\frac{1}{N}\sum_{j=1}^N\|{\cZ}^{\<2>}_{jj}\|_{C_T\bC^{-1-\kappa}}^2\Big)^{\frac12}\Big]
\\
&\qquad\times\Big(\sum_{i=1}^N\|X_i\|^2_{L_T^2H^{\frac12-2\kappa}}\Big)^{\frac12} \Big(\sum_{i=1}^N\|Y_i\|^2_{L_T^2H^{1-2\kappa}}\Big)^{\frac12},
\end{align*}
 which by Young's inequality shows that   $\|I_{32}^N\|_{L_T^1}$ is bounded by the second line of $Q_N^4$ in \eqref{zrx1} multiplied by $\l^2$ and $\delta\sum_{i=1}^N\|Y_i\|^2_{L_T^2H^{1-2\kappa}}$. Also we have
\begin{align*}
\|I_{33}^N\|_{L_T^1}
&\lesssim
\frac{\l}{N}\sum_{i,j=1}^N
\|X_{\qi}\circ \CZ^{\<2>}_{\qj j}\|_{L^2_TH^{-\frac12-2\kappa}}
\|Y_i\|_{L^2_TH^{1-2\kappa}}
\\
&\lesssim
\l\Big[
\frac{1}{N^2}\sum_{i=1}^N\Big(\sum_{j=1}^N
\|X_{\qi}\circ \CZ^{\<2>}_{\qj j}\|_{L^2_TH^{-\frac12-2\kappa}}
\Big)^{2}
\Big]^{\frac12}\Big(\sum_{i=1}^N\|Y_i\|^2_{L^2_TH^{1-2\kappa}}\Big)^{\frac12},
\end{align*}
which by Young's inequality shows that   $\|I_{33}^N\|_{L_T^1}$ is bounded by the last line of $Q_N^4$ in \eqref{zrx1} multiplied by $\l^2$ and $\delta\sum_{i=1}^N\|Y_i\|^2_{L_T^2H^{1-2\kappa}}$. 

Furthermore, using Lemma \ref{lem:para} and  Lemma \ref{lem:multi} (ii),
\begin{align*}
&\|I_{34}^N\|_{L_T^1}
\lesssim\frac{\l}{N}\sum_{i,j=1}^N\| Y_j\|_{L_T^2H^{\frac12+\kappa}}\| Y_i\|_{L_T^2H^{\frac12+\kappa}}\Big(\|{\cZ}^{\<2>}_{ij}\|_{C_T\bC^{-1-\kappa}}+\|{\CZ}^{\<2>}_{jj}\|_{C_T\bC^{-1-\kappa}}\Big)
\\&\lesssim\l\Big(\frac{1}{N^2}\sum_{i,j=1}^N(\|{\cZ}^{\<2>}_{ij}\|_{C_T\bC^{-1-\kappa}}^2+\|{\CZ}^{\<2>}_{jj}\|_{C_T\bC^{-1-\kappa}}^2)\Big)^{\frac12}\Big(\sum_{i=1}^N\| Y_i\|^{2}_{H^{1-2\kappa}}\Big)^\theta\Big(\sum_{i=1}^N\| Y_i\|^{2}_{L^2}\Big)^{1-\theta},
\end{align*}
with $\theta=\frac{\frac12+\kappa}{1-2\kappa}$, where we used H\"older inequality for the sum over $i$ and $j$.
Thus the required bound for $\|I_{34}^N\|_{L_T^1}$ follows by Young's inequality.

Proposition \ref{Xi1} now follows from \eqref{zmm011}, \eqref{zmm011-2}
and \eqref{zmm011-3} and Lemma \ref{Cubic}.
\end{proof}


\section{Convergence of measures and  tightness of observables}\label{sec:inv}

Now we return to the measure $\nu^N$ in \eqref{e:Phi_i-measure} which is invariant under \eqref{eq:main}.  In fact, by standard argument (c.f. \cite{HM18}) the solutions $(\Phi_i)_{1\leq i\leq N}$ to \eqref{eq:21} form a Markov process on $(\bC^{-\frac12-\kappa})^{N}$ which, by strong Feller property in \cite{HM18} and irreducibility in \cite{HS19}, will turn out to admit a unique invariant measure. 

 One goal in this section is to 
  show that for sufficiently large mass or small $\l$, in the limit $N \to \infty$, the marginals of $\nu^{N}$ are simply products of the Gaussian measure $\nu$, which is the unique invariant measure to the linear dynamics \eqref{eq:li1}. Furthermore, the convergence rate is given in Theorem \ref{th:main}, which can be seen as a very non elementary version of the Projective Central Limit Theorem. The idea in this section follows similarly as in our previous work \cite[Sections~5-6.1]{SSZZ20}. Since the 3D case is far more complicated than its counterpart in 2D, our work  heavily relies on the computations from Section~\ref{sec:sto} to Section~\ref{sec:uni} for the remainder $X$ and $Y$, but we leverage these estimates with consequences of stationarity.

\br  Using lattice approximations (see e.g. \cite{GH18}, \cite{HM18a, ZZ18})
one can show that the measure $\dif\nu^N(\Phi)$ indeed has the form \eqref{e:Phi_i-measure}, with suitable renormalization that is consistent with \eqref{eq:ap}, i.e.
formally \footnote{Namely, the integral,  $\nabla$ and  $\mathcal D\Phi$
	should be approximated by lattice summation, finite difference and Lebesgue measure.}
it is
$$\exp\Big(-\int \sum_{j=1}^N|\nabla \Phi_j|^2+\Big(m-\frac{N+2}N\l a_\eps+\frac{3(N+2)}{N^2}\l^2 \tilde b_\eps\Big)\sum_{j=1}^N\Phi_j^2+\frac{\l}{2N}\Big(\sum_{j=1}^N\Phi_j^2\Big)^2\dif x\Big)\mathcal D \Phi$$
divided by a normalization constant. 
\er 

\subsection{Tightness of the measure}
\label{sec:Tightness}

It will be convenient to have a stationary coupling of the linear and non-linear dynamics \eqref{eq:li1} and \eqref{eq:21}, which is stated in the following lemma {and the proof follows essentially the same as in \cite[Lemma 5.7]{SSZZ20}.}

\bl\label{lem:zz1} For $(m,\l)\in (0,\infty)\times [0,\infty)$, there exists a unique invariant measure $\nu^N$ on $(\bC^{-\frac{1}{2}-\kappa})^N$ to \eqref{eq:21}. Furthermore, there exists a  stationary process $(\tilde{\Phi}_i^N, Z_i)_{1\leq i\leq N}$ such that the components $\tilde{\Phi}_i^N, Z_i$ are stationary  solutions to \eqref{eq:21} and \eqref{eq:li1} respectively and
$$\E\|\tilde{\Phi}_i^N(0)-Z_i(0)\|_{L^2}^2\lesssim1.$$
Here the implicit constant may depend on $\l, m$ and $N$.
\el
\begin{proof}
Let $ \Phi_i$ and $\bar Z_i$ be solutions to \eqref{eq:21} and \eqref{eq:li1} with general initial conditions, respectively. We also recall that $Z_i$ is the stationary solution to \eqref{eq:li1}.
By the general results of \cite{HM18}, 
$(\Phi_i, \bar Z_i)_{1\leq i\leq N}$ is a Markov process on $(\bC^{-\frac12-\kappa})^{2N}$, and we denote by $(P_t^N)_{t\geq0}$ the associated Markov semigroup.  To derive the desired structural properties about the limiting measure, we will follow the Krylov-Bogoliubov construction with a specific choice of initial condition that allows to exploit the uniform estimate from Appendix~\ref{sec:extra}.
Namely, we denote by $\Phi_{i}$ the solution to \eqref{eq:21} starting from
 $Z_{i}(0)$ where $Z_i$ is  the stationary solution to \eqref{eq:li1}, so that the process $\Phi_i-\bar Z_i$ starts from the origin. In this case $\bar Z_i$ is the same as the stationary solution $Z_i$.
By Lemma \ref{lem:A2}, for every $T\geq 1$ and $\kappa>0$
\begin{align}
\int_0^T\mathbf{E}\Big(\frac{1}{N}\sum_{i=1}^N\| (\Phi_i-Z_i)(t)\|_{L^2}^2\Big)\dif t
+\int_0^T\mathbf{E}\Big(\frac{1}{N}\sum_{i=1}^N\| (\Phi_i- Z_i)(t)\|_{\bC^{-\frac{1+\kappa}2}}^2\Big)\dif t & \lesssim T,
\label{sszz2}
\end{align}
where the implicit constant is independent of $T$.  
By \eqref{sszz2} together with Lemma~\ref{thm:renorm43}
we have
\begin{align*}
\int_0^T\mathbf{E}\Big(\frac{1}{N}\sum_{i=1}^N\|\Phi_i(t)\|_{\bC^{-\frac{1+\kappa}2}}^2\Big)\dif t
+\int_0^T\mathbf{E}\Big(\frac{1}{N}\sum_{i=1}^N\|Z_i(t)\|_{\bC^{-\frac{1+\kappa}2}}^2\Big)\dif t\lesssim T.
\end{align*}

Defining $R_t^N:=\frac{1}{t}\int_0^tP_r^N\dif r$, the above estimates and the compactness of the embedding $\bC^{-\frac{1+\kappa}2} \hookrightarrow \bC^{-\frac12-\kappa}$ imply the induced laws of $\{R_t^N \}_{t\geq0}$ started from $({Z}_i(0),{Z}_i(0))$ are tight on $(\bC^{-\frac12-\kappa})^{2N}$.
Furthermore, by  continuity w.r.t. initial data, it is easy to check that $(P_t^N)_{t\geq0}$ is  Feller on $(\bC^{-\frac12-\kappa})^{2N}$.
By Krylov-Bogoliubov existence theorem (see \cite[Corollary 3.1.2]{DZ96}) , these laws converge weakly in $(\bC^{-\frac12-\kappa})^{2N}$ along a subsequence $t_k\to\infty$ to an invariant measure $\pi_N$ for $(P_t^N)_{t\geq0}$. The desired stationary process $(\tilde\Phi_i^N, Z_i)_{1\leq i\leq N}$ is defined to be the unique solution to \eqref{eq:21} and \eqref{eq:li1} obtained by sampling the initial datum $(\varphi_{i},z_{i})_{i}$ from $\pi_{N}$. By \eqref{sszz2} we deduce
\begin{align*}
\E^{\pi_N}&\|\Phi_i(0)-Z_i(0)\|_{L^2}^2=\E^{\pi_N}\sup_{ f }\la\Phi_i(0)-Z_i(0),f\ra^2
\\
&=\E\sup_{f}\lim_{T\to\infty}\bigg[\frac1T\int_0^T\la \Phi_i(t)-Z_i(t),f\ra\dif t\bigg]^2
\\
&\lesssim \lim_{T\to\infty}\frac1T\int_0^T\E\|\Phi_i(t)-Z_i(t)\|_{L^2}^2\dif t\lesssim1,
\end{align*}
where $\E^{\pi_N}$ denotes the expectation w.r.t. $\pi_N$ and $\sup_{f}$ is over smooth functions $f$ with $\|f\|_{L^2}\leq 1$. 

 Finally, we project onto the first component and consider the map $\bar{\Pi}_1:\mathcal{S}'(\mathbb{T}^3)^{2N}\rightarrow \mathcal{S}'(\mathbb{T}^3)^N$ defined through $\bar{\Pi}_1(\Phi,Z)=\Phi$.  Letting $\nu_N=\pi_N\circ \bar{\Pi}_1^{-1}$ yields an invariant measure to \eqref{eq:21}, and uniqueness follows from the general results of \cite{HM18} and \cite{HS19}.
\end{proof}

By lattice approximation it is possible to prove
 that the field $\Phi$ under $\nu^N$ is $O(N)$ invariant, translation invariant and satisfies reflection positivity
  (see e.g. \cite{GH18} for $\Phi^4_3$). 
 As we do not prove convergence of lattice approximations for sigma models in this paper, 
 we  give a proof of $O(N)$ and translation invariances 
using ergodicity of the dynamics. 

\bp The field $\Phi$ under $\nu^N$ is $O(N)$ invariant and translation invariant  in law.
\ep
\begin{proof} 
The equation \eqref{eq:ap} is $O(N)$ invariant and translation invariant, namely, for any $A\in O(N)$ and solution $\Phi_\eps$, $A\Phi_\eps$  satisfies the same equation with driven noise $A\xi_\eps \stackrel{law}{=} \xi_\eps$ and initial data $A\Phi_\eps(0)$, 
and  similar property holds for translations of $\Phi_\eps$.
		 By uniqueness of the solutions to \eqref{eq:ap}, the solution $\Phi_\eps(t)$ starting from zero satisfy $O(N)$ invariance and translation invariance. Hence, the limit $\Phi(t)$ of $\Phi_\eps(t)$ as $\eps\to0$ is $O(N)$ and translation invariant.
	As  $\nu^N$ is the unique invariant measure to \eqref{eq:main}, the  law of  $\Phi(t)$   converges to $\nu^N$ as $t\to\infty$. Hence,  the field $\Phi$ under $\nu^N$ is $O(N)$ and translation invariant.
\end{proof}

\begin{remark}\label{rem:lat}Lemma \ref{lem:zz1} also gives a dynamical / SPDE construction of  the measure $\nu^N$.
\end{remark}

Next we study tightness of the marginal laws of $\nu^{N}$ over $\mathcal{S}'(\mathbb{T}^3)^N$. 
Recall that $\nu^{N,i}\eqdef \nu^N\circ \Pi_i^{-1}$ is the marginal law of the $i^{th}$ component
 and $\nu^{N}_k\eqdef \nu^N\circ (\Pi^{(k)})^{-1}$  is the marginal law of the first $k$ components,
  with $\Pi_i$ and $\Pi^{(k)}$ defined in \eqref{e:Pr-Pi} and \eqref{e:Pr-Pik}, respectively. 

\bt\label{th:con} For $(m,\l)\in (0,\infty)\times [0,\infty)$,
$\{\nu^{N,i}\}_{N\geq1}$ form a tight set of probability measures on $H^{-\frac{1}{2}-\kappa}$ for $\kappa>0$. 
\et

\begin{proof}
	Let  $(\tilde \Phi_i, Z_i)_{1 \leq i \leq N}$ be 
	as in Lemma \ref{lem:zz1}.    
	By the compact embedding of $H^{\frac12-2\kappa} \hookrightarrow H^{-\frac12-\kappa}$ for $\kappa>0$ small enough and the stationarity of $\tilde{\Phi}_{i}$, it suffices to show that the second moment of $\|\tilde{\Phi}_i(0)-Z_i(0)\|_{H^{\frac12-2\kappa}}$ is bounded uniformly in $N$.  To this end,
	let $Y_i=\tilde{\Phi}_i-Z_i-X_i$ with $X_i$ defined in \eqref{eq:211}.
It holds that
\begin{equation}\label{bdd:1}
\aligned
\E\|\tilde \Phi_i&(0)  -Z_i(0)\|_{H^{\frac{1}{2}-2\kappa}}^2
=\frac{1}{t}\int_0^t\E\|\tilde \Phi_i(s)-Z_i(s)\|_{H^{\frac{1}{2}-2\kappa}}^2 \dif s
\\
&=\frac{1}{t}\int_0^t\E\|X_i(s)+Y_i(s)\|_{H^{\frac{1}{2}-2\kappa}}^2 \dif s
\\
&\leq \frac{2}{t}\int_0^t\E\|X_i(s)\|_{H^{\frac{1}{2}-2\kappa}}^2 \dif s+\frac{2}{t}\int_0^t\E\|Y_i(s)\|_{H^{\frac{1}{2}-2\kappa}}^2 \dif s.
\endaligned
\end{equation}
For the first term,
by \eqref{bdnx} of Lemma~\ref{lem:X2}, we have
\begin{align}\label{bdd:2}
\frac{1}{t}\int_0^t\E\|X_i(s)\|_{H^{\frac{1}{2}-2\kappa}}^2 \dif s=\frac{1}{tN}\int_0^t\E\sum_{i=1}^N\|X_i(s)\|_{H^{\frac{1}{2}-2\kappa}}^2 \dif s
\lesssim \frac{1}{tN}\E Q_N^0\lesssim \frac{C_t}{tN},
\end{align}
with $Q_N^0$ defined in Lemma \ref{lem:sto} and $\E Q_N^0\lesssim C_t$. 
For the second term, by $O(N)$ symmetry in law
\begin{align}\label{bdd:22}
\frac{1}{t}\int_0^t\E\|Y_i(s)\|_{H^{\frac{1}{2}-2\kappa}}^2 \dif s=\frac{1}{tN}\int_0^t\E\Big(\sum_{i=1}^N\|Y_i(s)\|_{H^{\frac{1}{2}-2\kappa}}^2\Big) \dif s .
\end{align}
Using Theorem \ref{Y:T4a}\footnote{Since we do not need to choose $m$ and $\l$ in the proof, we can use Theorem \ref{Y:T4a} for $m>0,\l\geq0$ with the proportional constant depending on $m,\l$. } we deduce that
\begin{align*}
\frac{1}{8N}\int_0^t\E\sum_{i=1}^N\|Y_i(s)\|_{H^{1-2\kappa}}^2 \dif s
&+\frac{m}{N}\int_0^t\E\sum_{i=1}^N\|Y_i(s)\|_{L^2}^2 \dif s+\frac{\l}{2N^2}\int_0^t\E\Big\|\sum_{i=1}^NY_i^2\Big\|_{L^2}^2\dif s\no
\\
& \leq 4\E\|\tilde{\Phi}_i(0)-Z_i(0)\|_{L^2}^2+\frac{4}{N}\sum_{i=1}^N\E\|X_i(0)\|_{L^2}^2+C.
\end{align*}
By definition of $X_i$ in \eqref{eq:211} (and definition of $\cI$ in Section~\ref{sec:heat}) we have $X_i(0)=-\frac\l{N}\sum_{j=1}^N\tilde\cZ^{\<30>}_{ijj}(0)$, which gives 
\begin{align*}
\frac1N\sum_{i=1}^N\E\|X_i(0)\|_{L^2}^2\lesssim \frac{\l^2}{N^2}\sum_{i,j=1}^N\E\|\tilde{\cZ}^{\<30>}_{ijj}(0)\|_{L^2}^2\lesssim1.
\end{align*}
Then we conclude that
\begin{align*}
&\E\|\tilde{\Phi}_i(0)-Z_i(0)\|_{H^{\frac{1}{2}-2\kappa}}^2\leq C_t+\frac{64}{t}\E\|\tilde{\Phi}_i(0)-Z_i(0)\|_{L^2}^2 \;.
\end{align*}
Lemma \ref{lem:zz1} implies that $\E\|\tilde{\Phi}_i(0)-Z_i(0)\|_{L^2}^2$ is finite.  Choosing $t=128$ and using $\|\cdot\|_{L^2}\leq \|\cdot\|_{H^{\frac{1}{2}-2\kappa}}$, we find $\E\|\tilde{\Phi}_i(0)-Z_i(0)\|_{H^{\frac{1}{2}-2\kappa}}^2\leq C_t$. The result then follows by the bound for $Z_i(0)$ from Lemma \ref{thm:renorm43}. 
\end{proof}

\subsection{Convergence of measures}
\label{sec:Con}

In the following we prove convergence of the measures to the unique invariant measure 
(which requires $m$ large enough or $\l$ small)
using the estimate in Theorem \ref{Y:T4}.

Define the $(H^{-\frac12-\kappa})^k$-Wasserstein distance for the measures on $(H^{-\frac12-\kappa})^k$
\begin{equ}[e:W2]
\mathbb{W}_{2,k}(\nu_1,\nu_2):=\inf_{\pi\in\mathscr{C}(\nu_1,\nu_2)}\left(\int\|\phi-\psi\|_{(H^{-\frac12-\kappa})^k}^2\pi(\dif \phi,\dif \psi)\right)^{1/2},
\end{equ}
where $\mathscr{C}(\nu_1,\nu_2)$ denotes the set of all couplings of $\nu_1, \nu_2$ satisfying $\int\|\phi\|_{(H^{-\frac12-\kappa})^k}^2\nu_i(\dif \phi)<\infty$ for $i=1,2$. Recall that $\nu=\cN(0,\frac12(m-\Delta)^{-1})$.

\bt\label{th:main} Let $(m,\l)\in [1,\infty)\times [0,\infty)$. There exists $c_0>0$ such that for $m\geq 1+\l(1+\l^{59}) c_0$ 
\begin{equation}
\mathbb{W}_{2,k}(\nu^{N}_k,\nu^{\otimes k}) \leq C_kN^{-\frac{1}{2}}. \label{West}
\end{equation}
\et
\begin{proof}
By Lemma \ref{lem:zz1} we may construct a stationary coupling $(\tilde \Phi_i, Z_i)$ of $\nu_N$ and $\nu$ whose components satisfy \eqref{eq:21} and \eqref{eq:li1}, respectively.  The stationarity of the joint law of $(\tilde \Phi_i, Z_i)$ implies that also $\tilde \Phi_i-Z_i$ is stationary.  We now claim that
\begin{equation}
\E \|\tilde \Phi_i(0)-Z_i(0)\|_{H^{\frac12-2\kappa}}^{2} \leq CN^{-1} \label{se8},
\end{equation}
which implies \eqref{West} by definition of the Wasserstein metric and the embedding $H^{\frac12-2\kappa} \hookrightarrow H^{-\frac12-\kappa}$.

Recall the inequality \eqref{bdd:1}, for which the RHS is bounded by \eqref{bdd:2}+\eqref{bdd:22}.

Using Theorem \ref{Y:T4} and Lemma \ref{lem:zmm} (with $R_N^1, R_N^2, Q_N^5$  introduced in Lemma \ref{lem:zz2}, Proposition \ref{theta1}  and Lemma \ref{lem:zmm} respectively), we deduce that
\begin{align}
&\frac18\int_0^t\E\Big(\sum_{i=1}^N\|Y_i(s)\|_{H^{1-2\kappa}}^2\Big) \dif s+(m-1)\int_0^t\E\Big(\sum_{i=1}^N\|Y_i(s)\|_{L^2}^2\Big) \dif s+\frac{\l}{N}\E\int_0^t\Big\|\sum_{i=1}^NY_i^2\Big\|_{L^2}^2\dif s\no
\\
&\leq\E\Big(\sum_{i=1}^N\|Y_i(0)\|_{L^2}^2\Big)+C\int_0^t\E\Big[\Big(\sum_{i=1}^N\|Y_i(s)\|_{L^2}^2\Big)\Big(\l(1+\l^{59})(R_N^1+R_N^2+Q_N^5)\Big)\Big] \dif s+C(t,\l)\no
\\
&\leq 2N\E\|\tilde{\Phi}_i(0)-Z_i(0)\|_{L^2}^2+2\sum_{i=1}^N\E\|X_i(0)\|_{L^2}^2
+C(t,\l)
 \no
\\
&\quad+C\int_0^t\E\Big(\sum_{i=1}^N\|Y_i(s)\|_{L^2}^2\Big)\E\Big(\l(1+\l^{59})(R_N^1+R_N^2+Q_N^5)\Big) \dif s  \label{sz1}
\\
&\quad+C\int_0^t\E\Big[\Big(\sum_{i=1}^N\|Y_i(s)\|_{L^2}^2\Big)\l(1+\l^{59})\Big| R_N^1+R_N^2+Q_N^5-\E[R_N^1+R_N^2+Q_N^5]\Big|\Big] \dif s. \no
\end{align}
Here $C$ is independent of $\l, m$ and $N$.
By definition of $X_i$ in \eqref{eq:211}   and similar cancelation as in the proof of Lemma \ref{lem:sto}
we have 
\begin{align*}\sum_{i=1}^N\E\|X_i(0)\|_{L^2}^2=\frac{\l^2}{N^2}\sum_{i=1}^N\E\Big\|\sum_{j=1}^N\tilde{\cZ}^{\<30>}_{ijj}(0)\Big\|_{L^2}^2\lesssim \frac{1}{N^2}\sum_{i,j=1}^N\E\|\tilde{\cZ}^{\<30>}_{ijj}(0)\|_{L^2}^2\lesssim1.
\end{align*}
Note that by Lemma~\ref{thm:renorm43}
 we can easily deduce for $T>0$
 $$
\E R_N^1 + \E R_N^2 +\E Q_N^5 \lesssim1   .
$$
Here we recall the definition of $Q_N^5$ depend on $T$ and $\E Q_N^5$ is increasing w.r.t. $T$.
The  last line of \eqref{sz1} is controlled by
\begin{align*}
&\frac{\l}{2N}\int_0^t\E \Big[\Big(\sum_{i=1}^N\|Y_i(s)\|_{L^2}^2\Big)^2\Big]\dif s
\\
&+C(\l) N\sum_{k=1,2}\int_0^t\E \Big[ (R_N^k-\E[R_N^k])^2\Big]\dif s
+C(\l) N\int_0^t\E \Big[ (Q_N^5-\E[Q_N^5] )^2\Big]\dif s .
\end{align*}
We claim that the last two terms  here
-- although apparently have a large factor $N$ -- are actually bounded
by constant independent of $N$.

Indeed
 $R_N^1 - \E R_N^1$,  $R_N^2 - \E R_N^2$ and $Q_N^5 - \E Q_N^5$
are all  summations of terms of the form
$$
 \frac{1}{N^l} \sum_{i_1\cdots i_l=1}^N M_{i_1,\cdots,i_l}
$$
for different choices of $l$  (namely $l=1,2$ for  $R_N^1-\E R_N^1$,  $l=1,2,3$ for  $R_N^2-\E R_N^2$,
and  $l=2,3,4,5$ for  $Q_N^5 - \E Q_N^5$),
where each $M_{i_1,\cdots,i_l}$ is mean-zero, has bounded second moment,
 and
they satisfy the independence assumption in Lemma~\ref{lem:M} because
 they only involve the stochastic objects  in \eqref{e:defZ}
  constructed from the independent noises $(\xi_i)_{i=1}^N$.
So by  Lemma~\ref{lem:M} below the claim is proved.

%
Recall that $C$ is independent of $\l, m$ and $N$. Choosing
$$
m>C\l(1+\l^{59})(\E[R_N^0+R_N^2]+\E[Q_N^5])+1,
$$
 with $T=64$ in $Q_N^5$ which is uniformly bounded w.r.t. $m$ by Lemma \ref{thm:renorm43},   we have for $t\leq 64$
\begin{align}
\frac18\int_0^t & \E\sum_{i=1}^N\|Y_i(s)\|_{H^{1-2\kappa}}^2 \dif s+\int_0^t\frac{\l}{N}\E\Big\|\sum_{i=1}^NY_i^2(s)\Big\|_{L^2}^2\dif s  \nonumber
\\
& \leq C(t,\l)+2N\E\|\tilde{\Phi}_i(0)-Z_i(0)\|_{L^2}^2.	\label{eq:yi1}
\end{align}
Combining \eqref{bdd:1} and \eqref{eq:yi1} we conclude that for $t\leq 64$
\begin{align*}
&\E\|\tilde{\Phi}_i(0)-Z_i(0)\|_{H^{\frac{1}{2}-2\kappa}}^2\leq \frac{C(t,\l)}{Nt}+\frac{32}{t}\E\|\tilde{\Phi}_i(0)-Z_i(0)\|_{L^2}^2.
\end{align*}
By Lemma \ref{lem:zz1} we have $\E\|\tilde{\Phi}_i(0)-Z_i(0)\|_{L^2}^2$ finite.
Choosing $t=64$ and using $\|\cdot\|_{L^2}\leq \|\cdot\|_{H^{\frac{1}{2}-2\kappa}}$ we obtain
the claim \eqref{se8}.
\end{proof}

\bl  \label{lem:M}
Let $l$ be a fixed positive integer and  $(M_{i_1,\cdots,i_l} : i_1,\cdots,i_l \in \{1,\cdots,N\})$
be a collection of mean-zero random variables
such that $\E [M_{i_1,\cdots,i_l}^2] \lesssim 1$ uniformly in $N$ for any $ i_1,\cdots,i_l \in \{1,\cdots,N\}$,
and assume that
$M_{i_1,\cdots,i_l} $ and $M_{j_1,\cdots,j_l}$ are independent when
the $2l$ indices $i_1,\cdots,i_l,j_1,\cdots,j_l$ are all different. Then we have
$$
\E \Big[ \Big( \frac{1}{N^l} \sum_{i_1\cdots i_l=1}^N M_{i_1,\cdots,i_l} \Big)^2\Big] \le C/N
$$
where $C$ only depends on $l$ and is independent of $N$.
\el

\begin{proof}
Writing the LHS as
$$
\frac{1}{N^{2l}} \sum_{i_1\cdots i_l=1}^N \sum_{j_1\cdots j_l=1}^N
\E[ M_{i_1,\cdots,i_l}  M_{j_1,\cdots,j_l} ]
 $$
we see that the expectation is zero
when $i_1,\cdots,i_l,j_1,\cdots,j_l$ are all different by the mean-zero and the independence assumptions.
When these indices are not all different,
 the number of summands is $N^{2l} - \frac{N!}{(N-2l)!} \le C N^{2l-1}$
 where $C$ only depends on $l$ but independent of $N$,
  and each summand is bounded by
our moment bound assumption, so we obtain the claimed bound.
\end{proof}

\subsection{Observables}\label{sec:Ob}

In this section we write the stationary solutions constructed in Lemma \ref{lem:zz1} as $(\Phi_i,Z_i)$.
In the following we study the  observables \eqref{e:twoObs}.
They are defined as follows.
By Lemma \ref{lem:zz1}  we  decompose ${\Phi}_i=X_i+Y_i+Z_i$ with $({\Phi}_i, Z_i)$ stationary and $X_i$ introduced in \eqref{eq:211}.
With this we define
\begin{align}\label{eq:o1}
\frac{1}{\sqrt{N}}\sum_{i=1}^N
\Wick{\Phi_i^2}=\frac{1}{\sqrt{N}}\sum_{i=1}^N \Big(X_i^2+Y_i^2+{\cZ}^{\<2>}_{ii}+2X_iY_i+2X_iZ_i+2Y_iZ_i \Big).
\end{align}
%

By \eqref{eq:yi1} and \eqref{se8} we also have
\bl\label{lem:o1} Let $(m,\l)\in[1,\infty)\times [0,\infty)$. For $m\geq c_0\l(1+\l^{59})+1$ with $c_0$ as in Theorem \ref{th:main}. It holds that
\begin{align*}
&\int_0^t\E\sum_{i=1}^N\|Y_i(s)\|_{H^{1-2\kappa}}^2 \dif s+\int_0^t\frac{1}{N}\E\Big\|\sum_{i=1}^NY_i^2(s)\Big\|_{L^2}^2\dif s
\lesssim 1,
\end{align*}
where the proportional constant may depend on $\l$ and is independent of $N$.
\el

\begin{proof}[Proof of Theorem~\ref{th:1.3}]
By stationarity we have
\begin{align*}
\E\Big\|\frac{1}{\sqrt{N}}\sum_{i=1}^N\Wick{\Phi_i^2}\Big\|_{B^{-1-2\kappa}_{1,1}}
=\frac{1}{T}\E\Big\|\frac{1}{\sqrt{N}}\sum_{i=1}^N\Wick{\Phi_i^2}\Big\|_{L_T^1B^{-1-2\kappa}_{1,1}}.
\end{align*}
In the following we use the above equality to derive
\begin{align}\label{szz3}
\E\Big\|\frac{1}{\sqrt{N}}\sum_{i=1}^N\Wick{\Phi_i^2}\Big\|_{B^{-1-2\kappa}_{1,1}}\lesssim1,
\end{align}
which implies the desired result by the compact embedding $B^{-1-2\kappa}_{1,1}\subset B^{-1-3\kappa}_{1,1}$.

\vspace{1ex}

We consider  $L_T^1B^{-1-2\kappa}_{1,1}$-norm of each term in \eqref{eq:o1}. Using Lemma \ref{lem:multi} and \eqref{bdnx} we find
\begin{align*}
\E\Big\|\frac{1}{\sqrt{N}}\sum_{i=1}^NX_i^2\Big\|_{L_T^1B^{\frac{1}{2}-3\kappa}_{1,1}}\lesssim \E\frac{1}{\sqrt{N}}\sum_{i=1}^N\|X_i\|_{L_T^2H^{\frac{1}{2}-2\kappa}}^2\lesssim\frac{1}{\sqrt{N}}.
\end{align*}
Similarly, by  Lemma \ref{lem:multi} and Lemma \ref{lem:o1} and Lemma \ref{lem:X2} we have
\begin{align*}
\E\Big\|\frac{1}{\sqrt{N}}\sum_{i=1}^NY_i^2\Big\|_{L_T^1B^{\frac{1}{2}-3\kappa}_{1,1}} & \lesssim \E\frac{1}{\sqrt{N}}\sum_{i=1}^N\|Y_i\|_{L_T^2H^{\frac{1}{2}-2\kappa}}^2\lesssim\frac{1}{\sqrt{N}},
\\
\E\Big\|\frac{1}{\sqrt{N}}\sum_{i=1}^NX_iY_i\Big\|_{L_T^1B^{\frac{1}{2}-3\kappa}_{1,1}}
 &\lesssim \E\frac{1}{\sqrt{N}}\Big(\sum_{i=1}^N\|Y_i\|_{L_T^2H^{\frac{1}{2}-2\kappa}}^2\Big)^{\frac12}
\Big(\sum_{i=1}^N\|X_i\|_{L_T^2H^{\frac{1}{2}-2\kappa}}^2\Big)^{\frac12}
\lesssim\frac{1}{\sqrt{N}}.
\end{align*}

Furthermore, by independence and similar argument as in the proof of Lemma \ref{lem:sto} we deduce
\begin{align*}
\E\Big\|\frac{1}{\sqrt{N}}\sum_{i=1}^N{\cZ}^{\<2>}_{ii}\Big\|_{L_T^2B^{-1-\kappa}_{2,2}}^2\lesssim \E\frac{1}{N}\sum_{i=1}^N\|{\cZ}^{\<2>}_{ii}\|_{L_T^2B^{-1-\kappa}_{2,2}}^2\lesssim 1.
\end{align*}
Using Lemma \ref{lem:multi} and Lemma \ref{lem:o1} we obtain
\begin{align*}
\E\Big\|\frac{1}{\sqrt{N}}\sum_{i=1}^NY_iZ_i\Big\|_{L_T^1B^{-\frac{1}{2}-2\kappa}_{1,1}}
\lesssim \E\frac{1}{\sqrt{N}}\Big(\sum_{i=1}^N\|Y_i\|_{L_T^2H^{1-2\kappa}}^2\Big)^{\frac12}
\Big(\sum_{i=1}^N\|Z_i\|_{L_T^2H^{-\frac{1}{2}-\kappa}}^2\Big)^{\frac12}\lesssim1.
\end{align*}
\vspace{1ex}

It only remains to consider $X_iZ_i$.
By Lemma \ref{lem:para} we have
\begin{align*}
\E\frac{1}{\sqrt{N}}\sum_{i=1}^N
\|X_i\prec Z_i  &+X_i\succ Z_i  \|_{L_T^2B^{-\frac12-2\kappa}_{2,2}}
\lesssim \E\frac{1}{\sqrt{N}}\sum_{i=1}^N\|X_i\|_{L_T^2H^{\frac12-2\kappa}}\|Z_i\|_{C_T\bC^{-\frac12-\kappa}}
\\&\lesssim \E\sum_{i=1}^N\|X_i\|_{L_T^2H^{\frac12-2\kappa}}^2+\frac{1}{N}\E\sum_{i=1}^N\|Z_i\|_{C_T\bC^{-\frac12-\kappa}}^2\lesssim1.
\end{align*}
Here we use Lemma \ref{lem:X2} in the last inequality.
For $X_i\circ Z_i$ we recall \eqref{sto3} that
\begin{align*}
X_i\circ Z_i=&-\frac{\l}{N} \sum_{j=1}^N \bigg[\tilde{\cZ}^{\<31>}_{ijj,i}+\cI(2X_j\prec \UU_> {\cZ}^{\<2>}_{ij}+X_i\prec \UU_>\CZ^{\<2>}_{jj})\circ Z_i\bigg],
\end{align*}
which by Lemma \ref{lem:para} implies that
\begin{align*}
\E\frac{1}{\sqrt{N}} & \sum_{i=1}^N\|X_i\circ Z_i\|_{L_T^1B^{-2\kappa}_{1,1}}
\lesssim \E\frac{1}{N^{3/2}} \sum_{i=1}^N \bigg\|\sum_{j=1}^N\tilde{\cZ}^{\<31>}_{ijj,i}\bigg\|_{L_T^1B^{-\kappa}_{2,2}}
\\
&+\E\frac{1}{N^{3/2}} \sum_{i,j=1}^N \|X_j\|_{L_T^2H^{\frac12-2\kappa}}\|{\cZ}^{\<2>}_{ij}\|_{C_T\bC^{-1-\kappa}}\| Z_i\|_{C_T\bC^{-\frac12-\kappa}}
\\
&+\E\frac{1}{N^{3/2}} \sum_{i,j=1}^N\|X_i\|_{L_T^2H^{\frac12-2\kappa}} \|\CZ^{\<2>}_{jj}\|_{C_T\bC^{-1-\kappa}}\| Z_i\|_{C_T\bC^{-\frac12-\kappa}}.
:=\sum_{i=1}^3 J_i^N.
\end{align*}

Regarding $J_2^N$ we can bound each summand by
$$
  \frac{1}{N} \|X_j\|_{L_T^2H^{\frac12-2\kappa}}^2
\| Z_i\|_{C_T\bC^{-\frac12-\kappa}}^2
+
 \frac{1}{N^2}\|{\cZ}^{\<2>}_{ij} \|_{C_T\bC^{-1-\kappa}}^2
$$
and after taking expectation and summation it is equal to
\begin{align*}
\E\Big(\sum_{j=1}^N\|X_j\|_{L_T^2H^{\frac12-2\kappa}}^2\Big)
\Big(\frac{1}{N}\sum_{i=1}^N\| Z_i\|_{C_T\bC^{-\frac12-\kappa}}^2\Big)
+\Big(\frac{1}{N^2}\E\sum_{i,j=1}^N\|{\cZ}^{\<2>}_{ij}\|_{C_T\bC^{-1-\kappa}}^2\Big).
\end{align*}
Applying Cauchy--Schwarz to the  product in the first term
followed by \eqref{bdnx}  and Lemma~\ref{thm:renorm43} and Lemma \ref{lem:X2} we have $J_2^N\lesssim 1$.
For the term $J_3^N$, each summand is bounded by
$$
  \frac{1}{N} \|X_i\|_{L_T^2H^{\frac12-2\kappa}}^2
\|{\cZ}^{\<2>}_{jj} \|_{C_T\bC^{-1-\kappa}}^2
+
 \frac{1}{N^2}
 \| Z_i\|_{C_T\bC^{-\frac12-\kappa}}^2
$$
and then $J_3^N\lesssim 1$ follows in the same way.

The first term $J_1^N$  is bounded by
\begin{align*}
\E\frac{1}{\sqrt{N}} \Big\|\sum_{j=1}^N\tilde{\cZ}^{\<31>}_{ijj,i}\Big\|_{L_T^1B^{-\kappa}_{2,2}}
\lesssim T\Big(\E\frac{1}{N} \Big\|\sum_{j=1}^N\tilde{\cZ}^{\<31>}_{ijj,i}\Big\|_{B^{-\kappa}_{2,2}}^2\Big)^{\frac12}.
\end{align*}
The quantity in the parenthesis is equal to
 \begin{align*}
\E\frac{1}{N} \sum_{j_1,j_2=1}^N\bigg\la \Lambda^{-\kappa}\tilde{\cZ}^{\<31>}_{ij_1j_1,i}, \Lambda^{-\kappa}\tilde{\cZ}^{\<31>}_{i j_2 j_2,i}\Big\ra.
\end{align*}
For the case  $j_1\neq j_2$, the expectation is zero.  We  thus conclude
that $J_1^N\lesssim1$. Combining all the calculations above \eqref{szz3} follows.
\end{proof}

\br
We believe that nontrivial formulae for the correlations of the observables in the large $N$ limit can be calculated explicitly as in 2D case (see \cite[Section 6]{SSZZ20}). 
However, the solutions to  the SPDEs \eqref{eq:main} and  the related field $\nu^N$ are much more singular  compared to its counterpart in 2D case. The uniform in $N$ estimates in 3D,  which are the main part in Sections \ref{sec:Sto}-\ref{sec:uni}, are much more complicated than the 2D case. Moreover, as $\Phi^3$ is not a random distribution in 3D, it would require more effort to interpret the new terms when applying Dyson--Schwinger equation. 
	In the 2D case $L^p$ energy estimates are enough to derive the convergence of the correlations of the observable, but in 3D case we need further decomposition and uniform in $N$ Schauder estimate to bound the cubic term. These requires more complicated estimates than that in  Sections \ref{sec:Sto}-\ref{sec:uni}, so we leave it to the future studies.
\er


 \appendix
\renewcommand{\appendixname}{Appendix~\Alph{section}}
\renewcommand{\theequation}{A.\arabic{equation}}

\section{Notations and Besov spaces}
\label{sec:pre}

\subsection{Besov spaces}\label{sub:1}
We denote by $(\Delta_{i})_{i\geq -1}$ the Littlewood--Paley blocks for a dyadic partition of unity.
The Besov spaces $B^\alpha_{p,q}$ on the torus with $\alpha\in \R$, $p,q\in[1,\infty]$ are defined as
the completion of $C^\infty$ with respect to the norm
$$\|u\|_{B^\alpha_{p,q}}\eqdef(\sum_{j\geq-1}2^{j\alpha q}\|\Delta_ju\|_{L^p}^q)^{1/q}.$$

The following embedding results will  be frequently used. 

\bl\label{lem:emb} Let $1\leq p_1\leq p_2\leq\infty$ and $1\leq q_1\leq q_2\leq\infty$, and let $\alpha\in\mathbb{R}$. Then $B^\alpha_{p_1,q_1} \subset B^{\alpha-d(1/p_1-1/p_2)}_{p_2,q_2}$.
Here  $\subset$ means  continuous and dense embedding. (cf. \cite[Lemma~A.2]{GIP15})

\el

We also recall the following interpolation lemma.

\bl\label{lem:interpolation}
Suppose that $\theta\in (0,1)$, $\alpha_1,\alpha_2\in \mR$. Then for $\alpha=\theta \alpha_1+(1-\theta)\alpha_2$
$$\|f\|_{H^\alpha}\lesssim \|f\|_{H^{\alpha_2}}^{1-\theta}\|f\|_{H^{\alpha_1}}^\theta.$$
(cf. \cite[Theorem 4.3.1]{Tri78})
\el
\subsection{Smoothing effect of heat flow}
\label{sec:heat}

We recall the following  smoothing effect of the heat flow $P_t=e^{t(\Delta-m)}$, $m>0$ (e.g. \cite[Lemma~A.7]{GIP15}, \cite[Proposition~A.13]{MW18}).
\vskip.10in
\bl\label{lem:heat}  Let $u\in B^{\alpha}_{p,q}$ for some $\alpha\in \mathbb{R}, p,q\in [1,\infty]$. Then for every $\delta\geq0$, $t\in[0,T]$
$$\|P_tu\|_{B^{\alpha+\delta}_{p,q}}\lesssim t^{-\delta/2}\|u\|_{B^{\alpha}_{p,q}},$$
where the proportionality constants are uniform for $m\geq1$.
If $0\leq \beta-\alpha\leq 2$, then
$$\|(\mathrm{I}-P_t)u\|_{B^{\alpha}_{p,q}}\lesssim t^{\frac{\beta-\alpha}2}\|u\|_{B^{\beta}_{p,q}},$$
where the proportionality constants are uniform for $m\geq1$.
\el
We also define $(\cI f)(t,x):=(\sL^{-1}f)(t,x)=\int_0^tP_{t-s}f\dif s$.

\begin{lemma}\label{lemma:sch} (\cite[Lemma~A.9]{GIP15},\cite[Lemma~2.8, Lemma~2.9]{ZZZ20})
	Let $\alpha\in\R$.
	Then the following bounds hold uniformly over $0\leq t\leq T$
	\begin{equation}\label{eq:1vw}
		\|\cI f(t)\|_{\bC^{2+\alpha}}\lesssim \|f\|_{L_T^{\infty}\bC^{\alpha}}.
	\end{equation}
	If $0\leqslant 2+\alpha < 2$ then
	\begin{equs}
		\|\cI f(t)\|_{C_T^{(2+\alpha)/2}L^\infty}\lesssim \|f\|_{L_T^\infty\bC^{\alpha}},
	\end{equs}
	where the proportionality constants are uniform for $m\geq1$.
\end{lemma}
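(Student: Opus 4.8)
The plan is to follow the classical semigroup proof of Schauder estimates, as in \cite[Lemma~A.9]{GIP15} and \cite[Lemmas~2.8--2.9]{ZZZ20}, the only additional point being to check that every constant can be taken uniform in $m\ge1$; this is automatic since the mass only improves the heat-kernel decay and the bounds in Lemma~\ref{lem:heat} are themselves uniform in $m\ge1$. Recall that $(\cI f)(t)=\int_0^t P_{t-s}f(s)\dif s$ with $P_\tau=e^{\tau(\Delta-m)}$, so the whole statement reduces to controlling this time integral in the relevant norms.

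For the spatial bound \eqref{eq:1vw} I would argue frequency block by frequency block: a direct use of Lemma~\ref{lem:heat} with a two-derivative gain would give $\|\cI f(t)\|_{\bC^{2+\alpha}}\lesssim\int_0^t(t-s)^{-1}\|f(s)\|_{\bC^\alpha}\dif s$, which diverges, so one must localise in frequency instead. Using $\|P_\tau\Delta_j g\|_{L^\infty}\lesssim e^{-c(2^{2j}+m)\tau}\|\Delta_j g\|_{L^\infty}$ uniformly in $m\ge1$ and $\int_0^t e^{-c2^{2j}(t-s)}\dif s\le c^{-1}2^{-2j}$, one obtains $2^{j(2+\alpha)}\|\Delta_j\cI f(t)\|_{L^\infty}\lesssim 2^{j(2+\alpha)}\,2^{-2j}\,2^{-j\alpha}\|f\|_{L^\infty_T\bC^\alpha}=\|f\|_{L^\infty_T\bC^\alpha}$, and taking the supremum over $j\ge-1$ gives \eqref{eq:1vw}.

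For the time-regularity bound, fix $0\le r<t\le T$ and split $\cI f(t)-\cI f(r)=\int_r^t P_{t-s}f(s)\dif s+(I-P_{t-r})\cI f(r)$. For the first piece I would again work block by block, bounding $\|\Delta_j\int_r^t P_{t-s}f(s)\dif s\|_{L^\infty}\lesssim 2^{-j\alpha}\|f\|_{L^\infty_T\bC^\alpha}\min(t-r,2^{-2j})$ and then splitting the sum over $j$ at the scale $2^{2j_0}\simeq(t-r)^{-1}$: the low-frequency part is $\lesssim(t-r)\sum_{j\le j_0}2^{-j\alpha}\simeq(t-r)^{(2+\alpha)/2}$ (using $\alpha<0$), and the high-frequency part is $\lesssim\sum_{j>j_0}2^{-j(2+\alpha)}\simeq(t-r)^{(2+\alpha)/2}$ (using $2+\alpha>0$), so this piece is $\lesssim(t-r)^{(2+\alpha)/2}\|f\|_{L^\infty_T\bC^\alpha}$ in $L^\infty$. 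For the second piece I would feed in the bound \eqref{eq:1vw} just obtained together with the second estimate of Lemma~\ref{lem:heat}, $\|(I-P_{t-r})g\|_{L^\infty}\lesssim(t-r)^{(2+\alpha)/2}\|g\|_{\bC^{2+\alpha}}$ (licit since $0\le 2+\alpha<2$, with constant uniform in $m\ge1$), applied to $g=\cI f(r)$. Adding the two contributions and dividing by $(t-r)^{(2+\alpha)/2}$ then finishes the argument.

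The main obstacle, such as it is, is the borderline two-derivative gain: the Schauder smoothing has to be realised at the level of individual Littlewood--Paley blocks, where the factor $e^{-c2^{2j}(t-s)}$ makes the time integral convergent, rather than at the level of Besov norms; and the frequency split at $2^{2j}\simeq(t-r)^{-1}$ in the H\"older-in-time estimate is the one place that needs a little bookkeeping. Everything else is routine, and since the argument uses only Lemma~\ref{lem:heat}, whose constants are uniform in $m\ge1$, the claimed uniformity in $m\ge1$ comes for free.
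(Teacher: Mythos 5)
The paper does not actually prove this lemma: it is stated as an import from \cite[Lemma~A.9]{GIP15} and \cite[Lemmas~2.8--2.9]{ZZZ20} and is used as a black box, so there is no ``paper's own proof'' against which to compare. Your blockwise Schauder argument is the standard route to both bounds and is essentially the one in the cited references: for \eqref{eq:1vw} the key is that the frequency-localised decay $e^{-c(2^{2j}+m)\tau}$ makes $\int_0^t e^{-c2^{2j}(t-s)}\dif s\lesssim 2^{-2j}$ uniformly in $m\ge1$, which correctly supplies the borderline two-derivative gain that a naive application of Lemma~\ref{lem:heat} with $\delta=2$ cannot; and for the H\"older-in-time bound the split $\cI f(t)-\cI f(r)=\int_r^t P_{t-s}f(s)\dif s-(I-P_{t-r})\cI f(r)$ (mind the sign) with the dyadic cut at $2^{2j_0}\simeq(t-r)^{-1}$ is exactly the usual device.

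One step deserves tightening. For the term $(I-P_{t-r})\cI f(r)$ you invoke $\|(I-P_{t-r})g\|_{L^\infty}\lesssim(t-r)^{(2+\alpha)/2}\|g\|_{\bC^{2+\alpha}}$ as ``the second estimate of Lemma~\ref{lem:heat}'', but that lemma gives $\|(I-P_\tau)g\|_{B^\gamma_{p,q}}\lesssim\tau^{(\beta-\gamma)/2}\|g\|_{B^\beta_{p,q}}$, and $\bC^0=B^0_{\infty,\infty}$ does not embed into $L^\infty$; taking $\gamma=\epsilon>0$ to restore the embedding costs $\epsilon/2$ in the H\"older exponent. The clean fix is to treat this piece by the same $\ell^1$ Littlewood--Paley sum you already used for the other one: from $\|(I-P_\tau)\Delta_j g\|_{L^\infty}\lesssim\min\bigl(1,\tau(2^{2j}+m)\bigr)2^{-j(2+\alpha)}\|g\|_{\bC^{2+\alpha}}$ and the split at $2^{2j_0}\simeq\tau^{-1}$ one gets $\tau^{(2+\alpha)/2}$ directly, with no loss. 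This also makes transparent that the high-frequency geometric series in both pieces requires $2+\alpha>0$ strictly, so the argument covers $0<2+\alpha<2$; the endpoint $2+\alpha=0$ is not reached by this method (nor is it needed in the paper, where the lemma is only applied with $\tfrac12-\kappa$ or $\tfrac14-\kappa$ in the exponent). With that adjustment your proof is complete and matches the standard one.
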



\bl\label{lem:z1} For $\beta\in \mR$ we have
$$
\|\cI f\|_{L_T^2H^\beta}\lesssim \|f\|_{L_T^2H^{\beta-2}}
\qquad \mbox{and} \qquad
\|\cI f\|_{W_T^{\frac14-3\kappa,2}L^2}\lesssim  \|f\|_{L^2_TH^{-\frac32-2\kappa}}$$
for $0<\kappa<1/20$, where the proportionality constants are uniform for $m\geq1$.
\el
\begin{proof} Let $\{e_k(x)\}=\{2^{-\frac32}e^{\iota \pi k\cdot x}, k\in \mZ^3\}$ on $\mathbb{T}^3$.
	\begin{align*}
		\|\cI f\|^2_{L_T^2H^\beta}
		&=\int_0^T\sum_k(|k|^2+m)^{\beta}\Big|\Big\langle \int_0^tP_{t-s}f\dif s,e_k\Big\rangle\Big|^2\dif t
		\\
		&=\int_0^T\sum_k(|k|^2+m)^{\beta}\Big| \int_0^te^{-(t-s)(|k|^2+m)}\langle f,e_k\rangle\dif s\Big|^2\dif t
		\\
		&\lesssim \int_0^T\sum_k(|k|^2+m)^{\beta-1}\int_0^te^{-(t-s)(|k|^2+m)}|\langle f,e_k\rangle|^2\dif s\dif t
		\\
		&=\sum_k(|k|^{2}+m)^{\beta-1}\int_0^T\int_s^Te^{-(t-s)(|k|^2+m)}\dif t|\langle f,e_k\rangle|^2\dif s
		\lesssim \|f\|_{L_T^2H^{\beta-2}}^2,
	\end{align*}
	where we use H\"older's inequality in the first inequality.
	Thus the first result follows.
	
	By the definition of $W_T^{\frac14-3\kappa,2}L^2$ we have
	\begin{align*}
		\|\cI f\|_{W_T^{\frac14-3\kappa,2}L^2}^2 =\int_0^T\|\cI f(s)\|_{L^2}^2\dif s+\int_0^T\int_0^T\frac{\|\cI f(t)-\cI f(s)\|^2_{L^2}}{|t-s|^{1+2(\frac14-3\kappa)}}\dif s\dif t
		=I_1+I_2.
	\end{align*}
	$I_1$ can be easily controlled by the first result and we consider the $I_2$ part.
	Using the smoothing effect of the heat kernel given in Lemma \ref{lem:heat} we have
	\begin{align*}
		I_2
		&\lesssim  \int_0^T\int_0^t\frac{\|(P_{t-s}-\mathrm{I})\cI f(s)\|^2_{L^2}}{|t-s|^{1+2(\frac14-3\kappa)}}\dif s\dif t+\int_0^T\int_0^t\frac{\left(\int_s^t(t-r)^{-\frac34-\kappa}\|f(r)\|_{H^{-\frac32-2\kappa}}\dif r\right)^2}{|t-s|^{1+2(\frac14-3\kappa)}}\dif s\dif t
		\\
		&\lesssim \int_0^T\int_0^t\frac{\|\cI f(s)\|^2_{H^{\frac12-2\kappa}}}{|t-s|^{1-2\kappa}}\dif s\dif t+\int_0^T\int_0^t\frac{\int_s^t(t-r)^{-\frac34-\kappa}\|f(r)\|^2_{H^{-\frac32-2\kappa}}dr}{|t-s|^{1+\frac14-5\kappa}}\dif s\dif t
		\\
		&\lesssim  \|\cI f\|^2_{L^2_TH^{\frac12-2\kappa}}+ \|f\|^2_{L^2_TH^{-\frac32-2\kappa}},
	\end{align*}
	where we used the fact that the second term on the right is controlled by
	\begin{align*}
		&\int_0^T\int_r^T\int_0^r(t-r)^{-\frac34-\kappa}\|f(r)\|^2_{H^{-\frac32-2\kappa}}|t-s|^{-1-\frac14+5\kappa}\dif s \dif t \dif r
		\\\lesssim&\int_0^T\int_r^T (t-r)^{-1+4\kappa}\|f(r)\|^2_{H^{-\frac32-2\kappa}}\dif t\dif r
	\end{align*}
	in the last step. Thus the second result follows from the first result.
\end{proof}

\subsection{Paraproducts and commutators}

We recall the following paraproduct introduced by Bony (see \cite{Bon81}). In general, the product $fg$ of two distributions $f\in \bC^\alpha, g\in \bC^\beta$ is well defined if and only if $\alpha+\beta>0$. In terms of Littlewood-Paley blocks, the product $fg$ of two distributions $f$ and $g$ can be formally decomposed as
$$fg=f\prec g+f\circ g+f\succ g,$$
with $$f\prec g=g\succ f=\sum_{j\geq-1}\sum_{i<j-1}\Delta_if\Delta_jg, \quad f\circ g=\sum_{|i-j|\leq1}\Delta_if\Delta_jg.$$
We also denote
\begin{align*}
	\succcurlyeq \, \eqdef \, \succ+\circ,
	\qquad \preccurlyeq\,\eqdef\, \prec+\circ.
\end{align*}

The following results on paraproduct on  Besov space is from \cite{Bon81} (see also \cite[Lemma~2.1]{GIP15},  \cite[Proposition~A.7]{MW18}).

\begin{lemma}\label{lem:para}
	Let  $\beta\in\R$, $p, p_1, p_2, q\in [1,\infty]$ such that $\frac{1}{p}=\frac{1}{p_1}+\frac{1}{p_2}$. Then we have
	\begin{equs}
		\|f\prec g\|_{B^\beta_{p,q}}
		&\lesssim\|f\|_{L^{p_1}}\|g\|_{B^{\beta}_{p_2,q}}
		\\
			\|f\prec g\|_{B^{\alpha+\beta}_{p,q}}
			&\lesssim\|f\|_{B^{\alpha}_{p_1,q}}\|g\|_{B^{\beta}_{p_2,q}}   \qquad (\mbox{for }\alpha<0)
				\\
				\|f\circ g\|_{B^{\alpha+\beta}_{p,q}}
				&\lesssim\|f\|_{B^{\alpha}_{p_1,q}}\|g\|_{B^{\beta}_{p_2,q}} \qquad
				(\mbox{for }\alpha+\beta>0).
			\end{equs}
		\end{lemma}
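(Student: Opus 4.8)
The plan is to treat this as the classical Bony paraproduct estimate and to isolate the two elementary Littlewood--Paley summation principles on which it rests, feeding the three products into them. Principle (i): if a sequence $(u_j)_{j\ge -1}$ satisfies $\operatorname{supp}\widehat{u_j}\subseteq 2^j\mathcal{A}$ for a fixed dyadic annulus $\mathcal{A}$, then for every $\gamma\in\R$ one has $\|\sum_j u_j\|_{B^\gamma_{p,q}}\lesssim \big\|(2^{j\gamma}\|u_j\|_{L^p})_j\big\|_{\ell^q}$. Principle (ii): if instead $\operatorname{supp}\widehat{u_j}\subseteq 2^j\mathcal{B}$ for a fixed dyadic ball $\mathcal{B}$, the same conclusion holds but \emph{only} for $\gamma>0$. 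Both follow by applying $\Delta_k$ to the sum: in case (i) the summand survives only for $|j-k|=O(1)$, in case (ii) only for $j\gtrsim k$, and in the latter case the geometric tail $\sum_{j\gtrsim k}2^{(k-j)\gamma}$ converges precisely because $\gamma>0$; in both cases the output is a discrete convolution of the $\ell^q$ sequence $(2^{j\gamma}\|u_j\|_{L^p})_j$ with a fixed $\ell^1$ sequence, so Young's inequality gives the bound. I would record these as a short preliminary lemma (or simply cite \cite{Bon81}).

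For the paraproduct $f\prec g=\sum_{j\ge 1}S_{j-1}f\,\Delta_j g$, with $S_{j-1}=\sum_{i<j-1}\Delta_i$, the point is that $S_{j-1}f$ has spectrum in a ball of radius $\lesssim 2^{j-2}$ and $\Delta_j g$ in an annulus of size $\sim 2^j$, so their product has spectrum in a dyadic annulus --- this is exactly why one cuts off at $i<j-1$ and not $i<j$. Hence Principle (i) applies with $\gamma=\beta$ for the first estimate and $\gamma=\alpha+\beta$ for the second, and one only needs H\"older's inequality $\|S_{j-1}f\,\Delta_j g\|_{L^p}\le\|S_{j-1}f\|_{L^{p_1}}\|\Delta_j g\|_{L^{p_2}}$. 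For the first estimate I bound $\|S_{j-1}f\|_{L^{p_1}}\lesssim\|f\|_{L^{p_1}}$ using uniform boundedness of the partial-sum operators on $L^{p_1}$ and pull the constant out of the $\ell^q$ sum. For the second (the case $\alpha<0$) I instead use $\|S_{j-1}f\|_{L^{p_1}}\le\sum_{i<j-1}\|\Delta_i f\|_{L^{p_1}}\le\|f\|_{B^\alpha_{p_1,q}}\sum_{i<j-1}2^{-i\alpha}\lesssim 2^{-j\alpha}\|f\|_{B^\alpha_{p_1,q}}$, where $\alpha<0$ is what makes the geometric series summable; the resulting factor $2^{-j\alpha}$ exactly absorbs the shift of the target index from $\beta$ to $\alpha+\beta$.

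For the resonant term $f\circ g=\sum_j R_j$, $R_j:=\sum_{|i-j|\le 1}\Delta_i f\,\Delta_j g$, each $R_j$ has spectrum only in a ball of radius $\lesssim 2^{j+2}$, so Principle (ii) must be used, and this is where the hypothesis $\alpha+\beta>0$ enters (and it is genuinely needed, since there is no low-output-frequency cancellation to exploit). H\"older's inequality together with $2^{-i\alpha}\sim 2^{-j\alpha}$ for $|i-j|\le 1$ gives $2^{j(\alpha+\beta)}\|R_j\|_{L^p}\lesssim \big(\sum_{|i-j|\le 1}c_i\big)d_j$ with $c_i:=2^{i\alpha}\|\Delta_i f\|_{L^{p_1}}\in\ell^q$ and $d_j:=2^{j\beta}\|\Delta_j g\|_{L^{p_2}}\in\ell^q$, and then the one-line discrete estimate $\big\|(\sum_{|i-j|\le 1}c_i)d_j\big\|_{\ell^q}\le 3\|c\|_{\ell^\infty}\|d\|_{\ell^q}\le 3\|c\|_{\ell^q}\|d\|_{\ell^q}$ closes the bound. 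Throughout, $q=\infty$ is handled by the usual sup conventions.

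The honest assessment is that there is no real obstacle here: this is textbook material and one could simply cite \cite{Bon81, GIP15, MW18}. If one wants it self-contained, the only two places needing care are the verification of the spectral-support claims --- in particular the reason $S_{j-1}$ rather than $S_j$ appears in the low--high term --- and keeping track of which sign condition does what: $\alpha<0$ is used purely for convergence of the geometric sum in the low--high estimate, while $\alpha+\beta>0$ is structurally forced by the ball (rather than annulus) localization of the resonant blocks.
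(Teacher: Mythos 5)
Your proof is correct, and it is the standard Littlewood--Paley argument: reduce to the two summation principles (annulus-supported blocks for arbitrary regularity, ball-supported blocks requiring positive regularity), verify the spectral support of the paraproduct and resonant blocks, and feed in H\"older's inequality with $\frac1p=\frac1{p_1}+\frac1{p_2}$. The paper does not actually prove this lemma --- it cites \cite{Bon81}, \cite[Lemma~2.1]{GIP15} and \cite[Proposition~A.7]{MW18} --- so there is no ``paper proof'' to compare against, but your writeup is a faithful reconstruction of the standard argument those references use. The only places worth flagging, and you handle both correctly, are: in the $\alpha<0$ case you pass through $\ell^\infty$ via $2^{i\alpha}\|\Delta_i f\|_{L^{p_1}}\le\|f\|_{B^\alpha_{p_1,q}}$ and absorb the geometric tail $\sum_{i<j-1}2^{-i\alpha}\lesssim 2^{-j\alpha}$, which is exactly where $\alpha<0$ is needed; and in the resonant term $\alpha+\beta>0$ enters only through the ball-localization principle, with the final discrete bound $\|(\sum_{|i-j|\le1}c_i)d_j\|_{\ell^q}\le 3\|c\|_{\ell^q}\|d\|_{\ell^q}$ closing it. Nothing to fix.
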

		
		Furthermore, we have the following multiplicative inequality.
		
		\bl\label{lem:multi}
		(i) Let $\alpha,\beta\in\mathbb{R}$ and $p, p_1, p_2, q\in [1,\infty]$ be such that $\frac{1}{p}=\frac{1}{p_1}+\frac{1}{p_2}$.
		The bilinear map $(u, v)\mapsto uv$
		extends to a continuous map from ${B}^\alpha_{p_1,q}\times {B}^\beta_{p_2,q}$ to ${B}^{\alpha\wedge\beta}_{p,q}$  if $\alpha+\beta>0$. (cf. \cite[Corollary~3.21]{MW17})
		
		(ii) (Duality.) Let $\alpha\in (0,1)$, $p,q\in[1,\infty]$, $p'$ and $q'$ be their conjugate exponents, respectively. Then the mapping  $\langle u, v\rangle\mapsto \int uv \dif x$  extends to a continuous bilinear form on $B^\alpha_{p,q}\times B^{-\alpha}_{p',q'}$, and one has $|\langle u,v\rangle|\lesssim \|u\|_{B^\alpha_{p,q}}\|v\|_{B^{-\alpha}_{p',q'}}$ (cf.  \cite[Proposition~3.23]{MW17}).
		
		(iii) (Fractional Leibniz estimate) Let $s\geq0$, $p, p_1, p_2, p_3, p_4, q\in [1,\infty]$ satisfy $\frac1p=\frac1{p_1}+\frac1{p_2}=\frac1{p_3}+\frac1{p_4}$. Then it holds that \cite[Proposition~A.2]{CGW2020}
		\begin{align}\label{ine:Lei}
			\|uv\|_{B^s_{p,q}}\lesssim \|u\|_{B^s_{p_1,q}}\|v\|_{L^{p_2}}
			+\|u\|_{L^{p_3}}\|v\|_{B^s_{p_4,q}}.
		\end{align}
		\el

		By duality and \cite[Proposition~3.25]{MW17} we easily deduce the following (cf. \cite[Lemma~2.5]{SSZZ20}):

		\begin{lemma}\label{lem:dual+MW}
			For $s \in (0,1)$
			\begin{equation*}
				|\langle g,f\rangle|\lesssim \big (\|\nabla g \|_{L^{1}}^{s}\|g\|_{L^{1}}^{1-s}+ \|g\|_{L^{1}} \big )\|f\|_{\bC^{-s}}  .
			\end{equation*}
		\end{lemma}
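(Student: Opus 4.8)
The plan is to deduce the estimate in two steps: a duality bound that converts the pairing $\langle g,f\rangle$ into a product of a $B^s_{1,1}$ norm of $g$ and the $\bC^{-s}$ norm of $f$, followed by an interpolation-type bound for $\|g\|_{B^s_{1,1}}$ in terms of $\|g\|_{L^1}$ and $\|\nabla g\|_{L^1}$.

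For the first step I would invoke Lemma~\ref{lem:multi}(ii) with $\alpha=s\in(0,1)$ and $p=q=1$, so that the conjugate exponents are $p'=q'=\infty$. This gives directly
\begin{equation*}
|\langle g,f\rangle|\lesssim \|g\|_{B^s_{1,1}}\|f\|_{B^{-s}_{\infty,\infty}}=\|g\|_{B^s_{1,1}}\|f\|_{\bC^{-s}},
\end{equation*}
where the hypothesis $s\in(0,1)$ is used precisely to apply the duality statement and the last identity is the definition $\bC^{-s}=B^{-s}_{\infty,\infty}$ from Section~\ref{sec:nota}.

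For the second step I would use the Littlewood--Paley characterization $\|g\|_{B^s_{1,1}}=\sum_{j\geq-1}2^{js}\|\Delta_j g\|_{L^1}$ together with the two elementary bounds $\|\Delta_j g\|_{L^1}\lesssim\|g\|_{L^1}$ (boundedness of $\Delta_j$ on $L^1$, uniformly in $j$) and, for $j\geq0$, the Bernstein-type bound $\|\Delta_j g\|_{L^1}\lesssim 2^{-j}\|\nabla g\|_{L^1}$ (writing $\Delta_j$ as convolution against a kernel of $L^1$-norm $\lesssim 2^{-j}$ composed with $\nabla$). Splitting the sum at an integer threshold $J\geq-1$,
\begin{equation*}
\|g\|_{B^s_{1,1}}\lesssim\sum_{-1\leq j\leq J}2^{js}\|g\|_{L^1}+\sum_{j>J}2^{j(s-1)}\|\nabla g\|_{L^1}\lesssim 2^{Js}\|g\|_{L^1}+2^{J(s-1)}\|\nabla g\|_{L^1},
\end{equation*}
using $s>0$ for the first geometric sum and $s<1$ for the second. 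If $\|\nabla g\|_{L^1}\geq\|g\|_{L^1}$ I would take $J$ equal to the integer part of $\log_2\big(\|\nabla g\|_{L^1}/\|g\|_{L^1}\big)\geq0$, giving $\|g\|_{B^s_{1,1}}\lesssim\|\nabla g\|_{L^1}^s\|g\|_{L^1}^{1-s}$; otherwise I would take $J=-1$, giving $\|g\|_{B^s_{1,1}}\lesssim\|g\|_{L^1}+\|\nabla g\|_{L^1}\lesssim\|g\|_{L^1}$. In either case $\|g\|_{B^s_{1,1}}\lesssim\|\nabla g\|_{L^1}^s\|g\|_{L^1}^{1-s}+\|g\|_{L^1}$, and combining with Step~1 yields the claim. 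This interpolation bound is also the content of \cite[Proposition~3.25]{MW17}, so one may alternatively quote it directly and omit the dyadic computation, cf.\ \cite[Lemma~2.5]{SSZZ20}.

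Since the authors describe the statement as "easily deduced," there is no serious obstacle. The only points needing a little care are verifying that the hypothesis $\alpha\in(0,1)$ of the duality lemma is met (it is exactly $s\in(0,1)$), and handling the low-frequency regime $\|\nabla g\|_{L^1}<\|g\|_{L^1}$ in the optimization: there the threshold $J$ cannot be pushed below $-1$ and one must fall back on the crude bound, which is precisely why the additive term $\|g\|_{L^1}$ appears on the right-hand side of the lemma.
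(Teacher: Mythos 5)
Your argument is correct and is the same route the paper takes: the paper simply cites the duality bound (Lemma~\ref{lem:multi}(ii)) together with \cite[Proposition~3.25]{MW17} for the bound $\|g\|_{B^s_{1,1}}\lesssim\|\nabla g\|_{L^1}^s\|g\|_{L^1}^{1-s}+\|g\|_{L^1}$, which is exactly the two-step decomposition you carry out. Your Littlewood--Paley/Bernstein derivation of the second ingredient, including the low-frequency fallback at $J=-1$, is just a self-contained proof of the cited proposition, so nothing is missing.
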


		We also recall the following commutator estimate (\cite[Lemma 2.4]{GIP15}, \cite[Proposition A.9]{MW18}).
		
		\begin{lemma}\label{lem:com1}
			Let $\alpha\in (0,1)$ and $\beta,\gamma\in \R$ such that $\alpha+\beta+\gamma>0$ and $\beta+\gamma<0$, $p,p_1,p_2\in [1,\infty]$, $\frac{1}{p}=\frac{1}{p_1}+\frac{1}{p_2}$. Then there exist a trilinear bounded operator $\tilde C(f,g,h):B^\alpha_{p_1,\infty}\times \bC^\beta\times B^\gamma_{p_2,\infty}\to B^{\alpha+\beta+\gamma}_{p,\infty}$ satisfying
			$$
			\|\tilde{C}(f,g,h)\|_{B^{\alpha+\beta+\gamma}_{p,\infty}}\lesssim \|f\|_{B^\alpha_{p_1,\infty}}\|g\|_{\bC^\beta}\|h\|_{B^\gamma_{p_2,\infty}}
			$$
			and for smooth functions $f,g,h$
			$$
			\tilde{C}(f,g,h)=(f\prec g)\circ h - f(g\circ h).
			$$
		\end{lemma}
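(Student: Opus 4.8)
The plan is to follow the standard Littlewood–Paley argument of Gubinelli–Imkeller–Perkowski and Bahouri–Chemin–Danchin, which is purely a matter of frequency bookkeeping plus one elementary commutator estimate. First I would expand both $(f\prec g)\circ h$ and $f(g\circ h)$ into sums over Littlewood–Paley blocks, writing $f\prec g=\sum_j S_{j-1}f\,\Delta_j g$ with $S_{j-1}:=\sum_{k\le j-2}\Delta_k$, and use the Fourier-support properties of $(\Delta_j)$ to see that, after finitely many index shifts, the two expansions run over essentially the same frequency triples: $(f\prec g)\circ h$ produces terms $\Delta_i\bigl(S_{j-1}f\,\Delta_j g\bigr)\Delta_{j}h$ with $i\sim j$, while in $f(g\circ h)$ one first replaces $f$ by $S_{j-1}f$ at the cost of a low-regularity remainder $\sum_{k\ge j-1}\Delta_k f\,\Delta_j g\,\Delta_{\sim j}h$ (here $\Delta_{\sim j}$ denotes a sum of finitely many blocks $\Delta_m$ with $|m-j|$ bounded), after which one is left with terms $S_{j-1}f\,\Delta_i\bigl(\Delta_j g\,\Delta_{\sim j}h\bigr)$. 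I would then \emph{define} $\tilde C(f,g,h)$ to be the formal difference, reorganized as a single series over $j$ whose generic summand is a commutator $[\Delta_i,S_{j-1}f]\bigl(\Delta_j g\bigr)\cdot\Delta_{\sim j}h$ plus the replacement remainder; convergence for general $f,g,h$ in the stated spaces (not just smooth ones) will follow from the estimates below, which simultaneously yield the continuity of the trilinear operator.

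The key estimate is the commutator bound. For a band-limited function $\phi$ one has $[\Delta_i,\phi]u(x)=\int K_i(x-y)\bigl(\phi(y)-\phi(x)\bigr)u(y)\,dy$ with $K_i=2^{id}K(2^i\,\cdot\,)$, $K$ Schwartz; inserting the first-order Taylor expansion $\phi(y)-\phi(x)=\int_0^1\nabla\phi(x+\tau(y-x))\cdot(y-x)\,d\tau$ and applying Minkowski's and Young's inequalities gives $\|[\Delta_i,\phi]u\|_{L^p}\lesssim 2^{-i}\|\nabla\phi\|_{L^{p_1}}\|u\|_{L^{p_2}}$ when $\tfrac1p=\tfrac1{p_1}+\tfrac1{p_2}$. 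Taking $\phi=S_{j-1}f$ with $i\sim j$, and using $\|\nabla S_{j-1}f\|_{L^{p_1}}\lesssim 2^{j(1-\alpha)}\|f\|_{B^\alpha_{p_1,\infty}}$ (this is where $\alpha<1$ enters, to sum the geometric series over the blocks of $f$), $\|\Delta_j g\|_{L^\infty}\lesssim 2^{-j\beta}\|g\|_{\bC^\beta}$ and $\|\Delta_{\sim j}h\|_{L^{p_2}}\lesssim 2^{-j\gamma}\|h\|_{B^\gamma_{p_2,\infty}}$, the $j$-th commutator summand is bounded in $L^p$ by $2^{-j(\alpha+\beta+\gamma)}\|f\|_{B^\alpha_{p_1,\infty}}\|g\|_{\bC^\beta}\|h\|_{B^\gamma_{p_2,\infty}}$ and has Fourier support in a ball of radius $\lesssim 2^{j}$. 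For the replacement remainder $\sum_{k\ge j-1}\Delta_k f\,\Delta_j g\,\Delta_{\sim j}h$ one uses $\alpha>0$ to perform the sum over $k$ and gets a comparable $2^{-j(\alpha+\beta+\gamma)}$-type bound, now with Fourier support in a ball of radius $\lesssim 2^{k}$.

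Finally I would assemble the Besov bound: applying $\Delta_q$ to the series annihilates all summands except those with $j$ (resp.\ $k$) at least $q$ up to a fixed constant, so $\|\Delta_q\tilde C(f,g,h)\|_{L^p}\lesssim\sum_{j\ge q-c}2^{-j(\alpha+\beta+\gamma)}(\cdots)\lesssim 2^{-q(\alpha+\beta+\gamma)}(\cdots)$ by $\alpha+\beta+\gamma>0$, which is exactly the claimed estimate; in the remainder term one additionally splits $\sum_j$ into $j\le q$ and $j>q$ and uses $\beta+\gamma<0$ to dominate the low-$j$ part by its $j=q$ term. For smooth $f,g,h$ all the rearrangements are evidently legitimate, so $\tilde C(f,g,h)=(f\prec g)\circ h-f(g\circ h)$ there, and by the continuity just established $\tilde C$ is the desired trilinear extension to the full Besov spaces. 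I expect the only genuine difficulty to be the bookkeeping in the first step — matching the two block expansions term by term through the index shifts so that everything collapses into commutators — together with tracking which of $\alpha\in(0,1)$, $\alpha+\beta+\gamma>0$, $\beta+\gamma<0$ is invoked at each summation; beyond the elementary commutator kernel estimate there is no hard analysis.
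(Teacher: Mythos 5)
The paper offers no proof of its own for this lemma: it simply cites \cite[Lemma~2.4]{GIP15} and \cite[Proposition~A.9]{MW18}, and your sketch is exactly the standard Littlewood--Paley commutator argument underlying those references, with the $L^{p_1}$--$L^\infty$--$L^{p_2}$ H\"older pairing and the three hypotheses $\alpha\in(0,1)$ (to sum $\nabla S_{j-1}f$ and the tail $\sum_{k\geq j}\Delta_k f$), $\beta+\gamma<0$ (to sum the low-frequency part of the remainder), and $\alpha+\beta+\gamma>0$ (for the final sum after applying $\Delta_q$) all deployed in the right places. The one cosmetic slip --- writing the post-replacement term as $S_{j-1}f\,\Delta_i(\Delta_j g\,\Delta_{\sim j}h)$, when the operator $\Delta_i$ actually sits on $S_{j-1}f\,\Delta_j g$ in the matching summand of $(f\prec g)\circ h$ so that the difference is precisely the commutator $[\Delta_i,S_{j-1}f]\Delta_j g\cdot\Delta_{\sim j}h$ --- does not affect the argument.
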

		
		The following lemmas are from \cite[Lemma A.13, Lemma A.14]{GH18}.
		In \cite{GH18} these lemmas are stated in the discrete setting,
		but the same arguments lead to the following statements in the continuum
		setting.
		
		\begin{lemma}\label{lem:com3}
			Let $\alpha,\beta,\gamma\in \R$ such that $\alpha+\beta+\gamma>0$ and $\beta+\gamma<0$. Then there exist a trilinear bounded operator $D(f,g,h):H^\alpha\times \bC^\beta\times H^\gamma\to \R$ satisfying
			$$
			|D(f,g,h)|\lesssim \|f\|_{H^\alpha}\|g\|_{\bC^\beta}\|h\|_{H^\gamma}
			$$
			and for smooth functions $f,g,h$
			$$
			D(f,g,h)=\la f,g\circ h\ra  - \la f\prec g, h\ra .
			$$
		\end{lemma}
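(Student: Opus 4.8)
The plan is to adapt the argument of \cite[Lemma~A.13]{GH18} (which is stated there in the discrete setting) to the continuum Littlewood--Paley calculus. The strategy has three steps: (i) rewrite both $\la f, g\circ h\ra$ and $\la f\prec g, h\ra$ in Littlewood--Paley blocks and use spectral support properties to reorganize their difference into an absolutely convergent triple series; (ii) bound each term of that series by H\"older's inequality; (iii) sum a geometric series using $\alpha+\beta+\gamma>0$ and conclude by density.

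For step (i) I would start from the expansions, valid for smooth $f,g,h$,
$$
\la f, g\circ h\ra=\sum_{|i-j|\le 1}\int \Big(\textstyle\sum_k\Delta_k f\Big)\,\Delta_i g\,\Delta_j h\,\dif x,
\qquad
\la f\prec g, h\ra=\sum_i\int (S_{i-1}f)\,\Delta_i g\,\Big(\textstyle\sum_j\Delta_j h\Big)\,\dif x .
$$
A triple product $\int(\Delta_k f)(\Delta_i g)(\Delta_j h)\,\dif x$ vanishes unless the two largest of the scales $2^k,2^i,2^j$ are comparable; moreover $S_{i-1}f\,\Delta_i g$ is spectrally supported in an annulus of size $\sim 2^i$, so in $\la f\prec g, h\ra$ only the blocks $\Delta_j h$ with $|i-j|\le N_0$ survive, $N_0$ being a fixed integer determined by the partition of unity. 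Subtracting, the ``matching'' low-frequency part $S_{i-1}f$ of $f$ cancels and one obtains, for smooth $f,g,h$, an identity of the shape
$$
D(f,g,h)=\sum_{i\ge-1}\ \sum_{|k-i|\le N_0,\ |j-i|\le N_0} c_{i,j,k}\int(\Delta_k f)(\Delta_i g)(\Delta_j h)\,\dif x,
$$
with fixed coefficients $c_{i,j,k}\in\{-1,0,1\}$; the right-hand side is then \emph{taken} as the definition of $D$ on $H^\alpha\times\bC^\beta\times H^\gamma$ in general. The reason only boundedly many $(j,k)$ occur for each $i$ is that, in every relevant frequency configuration, the pieces of $\sum_k\Delta_k f$ not annihilated by the support constraints are confined to scales within a fixed distance of $2^i$.

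For step (ii), H\"older's inequality gives
$$
\Big|\int(\Delta_k f)(\Delta_i g)(\Delta_j h)\,\dif x\Big|\le \|\Delta_k f\|_{L^2}\|\Delta_i g\|_{L^\infty}\|\Delta_j h\|_{L^2}\lesssim 2^{-k\alpha-i\beta-j\gamma}\|f\|_{H^\alpha}\|g\|_{\bC^\beta}\|h\|_{H^\gamma},
$$
using $\|u\|_{\bC^\beta}\simeq\sup_j 2^{j\beta}\|\Delta_j u\|_{L^\infty}$ and $\|u\|_{H^\gamma}^2\simeq\sum_j 2^{2j\gamma}\|\Delta_j u\|_{L^2}^2$. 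Since $|k-i|,|j-i|\le N_0$, the factor $2^{-k\alpha-i\beta-j\gamma}$ is comparable to $2^{-i(\alpha+\beta+\gamma)}$, and in step (iii) summing over $i\ge -1$ converges precisely because $\alpha+\beta+\gamma>0$; this produces $|D(f,g,h)|\lesssim\|f\|_{H^\alpha}\|g\|_{\bC^\beta}\|h\|_{H^\gamma}$, whence $D$ extends by density to the claimed bounded trilinear form, and by construction it agrees with $\la f,g\circ h\ra-\la f\prec g,h\ra$ on smooth inputs. I should note that the hypothesis $\beta+\gamma<0$ plays no role in the estimate itself; it merely records that $g\circ h$ is not classically defined (when $\beta+\gamma>0$ one checks directly via Lemma~\ref{lem:para} that this $D$ coincides with $\la f, g\circ h\ra-\la f\prec g,h\ra$ term by term).

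The hard part will be the spectral-support bookkeeping in step (i): pinning down the cancellation of the low-frequency part of $f$ and the exact bounded index ranges $|k-i|,|j-i|\le N_0$, with all constants fixed consistently with the chosen dyadic partition. This is routine given the standard support properties of $\Delta_j$ and of the paraproduct (cf.\ \cite[Lemma~2.1]{GIP15}), but it is where essentially all the work lies; steps (ii) and (iii) are a one-line H\"older estimate and a geometric series, and I do not expect any further difficulty.
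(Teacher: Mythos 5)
The paper gives no proof of its own for this lemma—it simply cites \cite[Lemma~A.13]{GH18} for the discrete version and asserts the same argument carries over to the continuum—so your Littlewood--Paley commutator argument is exactly the standard route being invoked, and it is correct. The point you flag as the crux does deserve to be pinned down, so let me confirm it works: after cancelling the common region $|j-i|\le 1$, $k<i-1$, the residual triples from $-\la f\prec g,h\ra$ have $1<|j-i|\le N_0$ and \emph{a priori} all $k<i-1$, so the bounded range in $k$ is not immediate. What saves it is that for $k<i-1$ the Fourier support of $\Delta_k f\,\Delta_i g$ is confined to an annulus at radius $\sim 2^i$, displaced at most $\sim 2^k$ from the pure annulus of $\Delta_i g$, while $\Delta_j h$ with $|j-i|\ge 2$ sits at a frequency distance $\gtrsim 2^i$ away (with the standard partition the gap between the annuli of $\Delta_i$ and $\Delta_{i+2}$ is at least $\tfrac13 2^i$); overlap therefore forces $2^k\gtrsim 2^i$, i.e.\ $k\ge i-N_0$. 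With both residual families confined to $|k-i|,|j-i|\le N_0$, your steps (ii)--(iii) close exactly as you wrote them, and the remark that $\beta+\gamma<0$ plays no role in the estimate is also correct.
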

		
		\begin{lemma}\label{lem:com2}
			Let $\alpha,\beta,\gamma\in \R$ such that $\alpha\in (0,1)$,  $\alpha+\beta+\gamma+2>0$ and $\beta+\gamma+2<0$. Then there exist  trilinear bounded operators
			$$C(f,g,h):H^\alpha\times \bC^\beta\times \bC^{\gamma+\delta}\to H^{\beta+\gamma+2},$$
			$$\bar{C}(f,g,h):\bC^\alpha\times \bC^\beta\times \bC^{\gamma+\delta}\to \bC^{\beta+\gamma+2}$$
			satisfying for $\delta>0$
			\begin{equs}
				\|C(f,g,h)\|_{H^{\beta+\gamma+2}}
				&\lesssim \|f\|_{H^\alpha}\|g\|_{\bC^\beta}\|h\|_{\bC^{\gamma+\delta}}
				\\
				\|\bar{C}(f,g,h)\|_{C_T\bC^{\beta+\gamma+2}}
				&\lesssim (\|f\|_{C_T\bC^\alpha}+\|f\|_{C_T^{\alpha/2} L^\infty})\|g\|_{C_T\bC^\beta}\|h\|_{C_T\bC^{\gamma+\delta}}
			\end{equs}
			and for smooth functions $f,g,h$
			\begin{equs}
				C(f,g,h) & = ((m-\Delta)^{-1}(f\prec g) )\circ h- f(h\circ (m-\Delta)^{-1}g),
				\\
				\bar{C}(f,g,h) & =\cI(f\prec g)\circ h-f\cI(g)\circ h
				\qquad(\mbox{with } \cI=\LL^{-1}).
			\end{equs}
		\end{lemma}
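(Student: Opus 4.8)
The final statement is the commutator Lemma~\ref{lem:com2}, providing the trilinear operators $C$ and $\bar C$. The plan is to prove the two claims in parallel, since the continuum versions are obtained from the discrete arguments of \cite[Lemmas~A.13, A.14]{GH18} by the standard Littlewood--Paley dictionary. Let me sketch the argument for $C(f,g,h)$ first, then indicate the modifications for $\bar C$.

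\textbf{Construction via Littlewood--Paley decomposition.}
For smooth $f,g,h$ I would start from the identity
\[
C(f,g,h) = \big((m-\Delta)^{-1}(f\prec g)\big)\circ h - f\,\big(h\circ (m-\Delta)^{-1}g\big),
\]
and expand both terms into Littlewood--Paley blocks. Writing $\sD^{-1}=(m-\Delta)^{-1}$, the first term decomposes, after inserting $\sD^{-1}(f\prec g) = \sum_j \sD^{-1}(S_{j-1}f\,\Delta_j g)$, and pairing with the blocks $\Delta_{j'}h$ with $|j-j'|\le 1$ coming from $\circ$, into a main term where $S_{j-1}f$ is pulled out of $\sD^{-1}$, plus a remainder. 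The main term, after summing, is $\sum_j f\,\Delta_j(\sD^{-1}g)\circ\Delta_{\sim j}h = f\,(h\circ\sD^{-1}g)$ up to telescoping $S_{j-1}f\to f$, which cancels the second term in $C$; the genuinely new object $C(f,g,h)$ is thus the collection of remainder terms. These remainder terms are of two types: (i) commutators between $S_{j-1}f$ and $\sD^{-1}$, i.e. $\sD^{-1}(S_{j-1}f\,\Delta_j g) - S_{j-1}f\,\sD^{-1}\Delta_j g$, paired with $\Delta_{\sim j}h$; (ii) low-frequency telescoping terms $(S_{j-1}f - f)\Delta_j(\sD^{-1}g)\circ\Delta_{\sim j}h$. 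Each remainder carries an extra factor $2^{-j}$ (from the smoothing of the difference quotient of the kernel $(m+|\cdot|^2)^{-1}$ acting across scale $2^j$, using that $\sD^{-1}$ has symbol which is Lipschitz at scale $2^j$ with constant $2^{-j}\cdot 2^{-2j}$ — more precisely the commutator gains one derivative relative to $\sD^{-1}$ itself). Summing over $j$ with the Besov weights, using $\|S_{j-1}f\|_{L^{p_1^*}}\lesssim\|f\|_{H^\alpha}$ (after Bernstein) and $\alpha+\beta+\gamma+2>0$ for convergence of the high-frequency part and $\beta+\gamma+2<0$ to handle the low-frequency telescoping, gives the claimed bound in $H^{\beta+\gamma+2}$. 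The bounded trilinear extension to $H^\alpha\times\bC^\beta\times\bC^{\gamma+\delta}$ then follows by density, the role of $\delta>0$ being to give a summable series (turning a borderline $\bC^\gamma$ estimate into a convergent one, absorbing the logarithmic loss from pairing $\circ$-blocks).

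\textbf{The $\bar C$ version.}
For $\bar C(f,g,h)=\cI(f\prec g)\circ h - f\,\cI(g)\circ h$, the operator $\cI=\LL^{-1}=\int_0^t P_{t-s}(\cdot)\,\dif s$ plays the role of $\sD^{-1}$; it is again a Fourier multiplier (in space, with time convolution) which commutes with $S_{j-1}f$ up to an error gaining one spatial derivative at scale $2^j$, uniformly in $m\ge 1$ by Lemma~\ref{lem:heat}. The same decomposition into commutator and telescoping remainders applies; the extra subtlety is the time-regularity, which is why the bound involves $\|f\|_{C_T^{\alpha/2}L^\infty}$ in addition to $\|f\|_{C_T\bC^\alpha}$: when commuting $S_{j-1}f(s)$ past the time-integral $\int_0^t P_{t-s}$, one must control $\|S_{j-1}f(t)-S_{j-1}f(s)\|_{L^\infty}\lesssim |t-s|^{\alpha/2}\|f\|_{C_T^{\alpha/2}L^\infty}$ against the integrable-in-$s$ kernel bound $\|P_{t-s}\Delta_j(\cdot)\|\lesssim (t-s)^{-?}e^{-c2^{2j}(t-s)}$, and the $|t-s|^{\alpha/2}$ gain against the heat kernel produces the correct power of $2^j$. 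I would carry out this estimate carefully and show the resulting series is summable in $j$ under the same conditions $\alpha\in(0,1)$, $\alpha+\beta+\gamma+2>0$, $\beta+\gamma+2<0$, yielding the $C_T\bC^{\beta+\gamma+2}$ bound uniform in $m\ge1$.

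\textbf{Main obstacle.}
The routine part is the frequency bookkeeping; the part requiring genuine care is establishing that $\sD^{-1}$ (resp. $\cI$) genuinely commutes with a low-frequency paraproduct factor up to a one-derivative gain, \emph{uniformly in $m\ge 1$} — i.e. the quantitative kernel estimate on $\sD^{-1}(S_{j-1}f\,\cdot\,) - S_{j-1}f\,\sD^{-1}(\cdot)$ and its $\cI$-analogue. For $\sD^{-1}$ this is a Coifman--Meyer type commutator estimate for the multiplier $(m+|\xi|^2)^{-1}$ restricted to dyadic shells; for $\cI$ one uses the explicit heat semigroup representation and the mild smoothing/time-Hölder trade-off from Lemma~\ref{lem:heat}. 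Since \cite{GH18} proves exactly these in the discrete setting (Lemmas~A.13--A.14 there) with arguments that transfer verbatim to the torus, I would state the key kernel bounds, indicate that the proofs are the continuum transcriptions of the cited lemmas, and refer the reader there for the remaining details, spelling out only the time-regularity bookkeeping that produces the $C_T^{\alpha/2}L^\infty$ norm in the $\bar C$ estimate.
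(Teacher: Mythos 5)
Your proposal follows the same route as the paper, which gives no proof of its own but simply cites \cite[Lemma~A.14]{GH18} and asserts that the discrete arguments transfer verbatim to the continuum torus; your sketch of the Littlewood--Paley mechanism (commuting the low-frequency factor $S_{j-1}f$ past $\sD^{-1}$, resp.\ $\cI$, with a one-derivative gain, plus the telescoping/$\tilde C$-type remainder, with the conditions $\alpha+\beta+\gamma+2>0$ and $\beta+\gamma+2<0$ ensuring summability) correctly identifies the content of that cited lemma. Your observation that the time-H\"older norm $\|f\|_{C_T^{\alpha/2}L^\infty}$ enters precisely because $\cI$ is a time convolution, so one must trade the heat-kernel decay against $\|S_{j-1}f(t)-S_{j-1}f(s)\|_{L^\infty}$, is also the right explanation for the form of the $\bar C$ bound.
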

		
		We also prove the following estimate for commutators:
		
		\bl\label{lem:com}
		For $\bar C$ as in Lemma \ref{lem:com2}, $T>0$,
		$f\in L_T^2H^{\frac{1}{2}-2\kappa}\cap W^{\frac{1}{4}-3\kappa,2}_TL^2$,
		and  $g, h\in C_T\bC^{-1-\kappa}$ with $0<\kappa<1/22$, it holds that
		\begin{align*}
			\|\bar{C}(f,g,h)\|_{L^2_TL^2}\lesssim (\|f\|_{L^2_TH^{\frac12-2\kappa}}+\|f\|_{W_T^{\frac14-3\kappa,2}L^2})\|g\|_{C_T\bC^{-1-\kappa}}\|h\|_{C_T\bC^{-1-\kappa}}.\end{align*}
		Here the proportionality constants are uniform for $m\geq1$ and may depend on $T$.
		\el
		\begin{proof}
			We write $\bar{C}(f,g,h)(t)$ as
			\begin{align*}
				\int_0^t & P_{t-s}(\delta_{st}f\prec g_s)\dif s\circ h_t
				+\int_0^t(P_{t-s}[f_t\prec g_s]-f_t\prec P_{t-s}g_s)\dif s\circ h_t
				\\& +\Big((f\prec\cI g)\circ h-f(\cI g\circ h)\Big)(t)
				\qquad \eqdef I_1(t)+I_2(t)+I_3(t),
			\end{align*}
			with $\delta_{st}f=f_s-f_t$.
			By Lemmas \ref{lem:com1} and \ref{lemma:sch} we have
			\begin{align*}
				\|I_3\|^2_{L^2_TL^2}
				&\lesssim \|h\|^2_{C_T\bC^{-1-\kappa}}\|\mathcal{I}g\|^2_{C_T\bC^{1-\kappa}}\|f\|_{L_T^2H^{\frac{1}{2}-2\kappa}}^2
				\\
				&\lesssim \|h\|^2_{C_T\bC^{-1-\kappa}}\|g\|^2_{C_T\bC^{-1-\kappa}}\|f\|_{L_T^2H^{\frac{1}{2}-2\kappa}}^2.\end{align*}
			Using Lemma \ref{lem:para}, Lemma \ref{lem:heat} followed by H\"older's inequality we find
			\begin{align*}
				\|I_1\|^2_{L^2_TL^2}
				&\lesssim \|h\|^2_{C_T\bC^{-1-\kappa}}\|\cI(\delta_{st}f\prec g)\|_{L_T^2H^{1+2\kappa}}^2
				\\
				&\lesssim \|h\|^2_{C_T\bC^{-1-\kappa}}\|g\|^2_{C_T\bC^{-1-\kappa}}\int_0^T\Big[\int_0^t\frac{\|\delta_{st}f\|_{L^2}}{|t-s|^{1+2\kappa}}\dif s\Big]^2\dif t
				\\
				&\lesssim \|h\|^2_{C_T\bC^{-1-\kappa}}\|g\|^2_{C_T\bC^{-1-\kappa}}\int_0^T \! \int_0^t\frac{\|\delta_{st}f\|^2_{L^2}}{|t-s|^{1+5\kappa}}\,\dif s\dif t
				\\
				&\lesssim
				\|f\|^2_{W_T^{\frac14-3\kappa,2}L^2}\|g\|^2_{C_T\bC^{-1-\kappa}}\|h\|^2_{C_T\bC^{-1-\kappa}}.
			\end{align*}
			Moreover, Lemma \ref{lem:para} and \cite[Proposition A.16]{MW18} imply that
			\begin{align*}
				\|I_2\|^2_{L^2_TL^2}
				&\lesssim \|h\|^2_{C_T\bC^{-1-\kappa}}\Big\|\int_0^t \Big(P_{t-s}[f_t\prec g_s]-f_t\prec P_{t-s}g_s \Big)\dif s\Big\|_{L_T^2H^{1+2\kappa}}^2
				\\&\lesssim \|h\|^2_{C_T\bC^{-1-\kappa}}\|g\|^2_{C_T\bC^{-1-\kappa}}\int_0^T\Big[\int_0^t\frac{\|f_t\|_{H^{\frac12-2\kappa}}}{|t-s|^{1-2\kappa}}\dif s\Big]^2\dif t
				\\&\lesssim\|f\|^2_{L^2_TH^{\frac12-2\kappa}}\|g\|^2_{C_T\bC^{-1-\kappa}}\|h\|^2_{C_T\bC^{-1-\kappa}},
			\end{align*}
			where we use H\"older's inequality in the last step.
		\end{proof}

\renewcommand{\theequation}{B.\arabic{equation}}
\section{Extra estimates}
\label{sec:extra}

 By similar argument as in Section \ref{sec:uni} we deduce
\bl\label{lem:a1}
(Energy balance)
\begin{align}\label{zmm}
\frac{1}{2} \frac1N\sum_{i=1}^{N}\frac{\dif}{\dif t}\|Y_{i}\|_{L^{2}}^{2}+\frac{1}{N^2}\bigg \|\sum_{i=1}^{N}Y_{i}^{2} \bigg \|_{L^{2}}^{2}+\frac1N\sum_{i=1}^{N}\|Y_{i}\|_{H^{1-2\kappa}}^{2}\lesssim R_N,
\end{align}
for $R_N$ given in the proof. Here the implicit constant may depend on $\l$ and $m$.
\el
\begin{proof}
	Using Lemma \ref{den}
	we find
	\begin{align*}
	\frac{1}{2} \frac1N\sum_{i=1}^{N}\frac{\dif}{\dif t}\|Y_{i}\|_{L^{2}}^{2}+\frac{1}{N^2}\bigg \|\sum_{i=1}^{N}Y_{i}^{2} \bigg \|_{L^{2}}^{2}+\frac mN\sum_{i=1}^{N}\|\varphi_{i}\|_{L^2}^{2}+\frac 1N\sum_{i=1}^{N}\|\nabla\varphi_{i}\|_{L^2}^{2}
	\leq \frac1N\Theta+\frac1N\Xi.
	\end{align*}
In Proposition \ref{theta1} and Lemma \ref{Cubic} we already deduce the required bound for $\frac1N\Theta$ and the cubic terms in $\frac1N\Xi$.
In the following  we consider the remaining terms in  $\frac1N\Xi$. In the proof of Proposition \ref{Xi1} we give estimate for $\|\Xi\|_{L_T^1}$.
Following  the same argument  and estimate at a fixed time, we find
\begin{align*}
&\frac1N\Theta+\frac1N\Xi\lesssim \delta \frac 1N\sum_{i=1}^{N}\|Y_{i}\|_{H^{1-2\kappa}}^{2}+\frac 1N\sum_{i=1}^{N}\|Y_{i}\|_{L^2}^{2}(\bar{Q}_N^3+R_N^2)+\frac{\bar{Q}_N^4}N
\\ & \lesssim \delta \frac 1N\sum_{i=1}^{N}\|Y_{i}\|_{H^{1-2\kappa}}^{2}
+\delta\frac{1}{N^2}\bigg \|\sum_{i=1}^{N}Y_{i}^{2} \bigg \|_{L^{2}}^{2}+
(\bar{Q}_N^3+R_N^2)^2+\frac1N\bar{Q}_N^4,
\end{align*}
with
\begin{align*}
\bar{Q}_N^3 &\eqdef R_N^4+1+\Big(\frac{1}{N}\sum_{j=1}^N\|X_j\|_{\bC^{\frac{1}{2}-2\kappa}}^2\Big)^2+\Big(\frac{1}{N^2}\sum_{i,j=1}^N\|X_jZ_i\|^2_{\bC^{-\frac12-\kappa}}\Big)
\\&\qquad+\Big(\frac{1}{N}\sum_{j=1}^N\|X_jZ_j\|^2_{\bC^{-\frac12-\kappa}}\Big)
+\frac{1}{N^2}\sum_{i,j=1}^N\Big(\|{\cZ}^{\<2>}_{ij}\|_{\bC^{-1-\kappa}}^{\frac{1}{1-\theta}}+\|{\CZ}^{\<2>}_{jj}\|_{\bC^{-1-\kappa}}^{\frac{1}{1-\theta}}\Big),
\end{align*}
with $\theta=\frac{\frac12+\kappa}{1-2\kappa}$ and
\begin{align*}
\bar{Q}_N^4 &\eqdef
\Big(\sum_{i=1}^N\|X_i\|_{H^{\frac{1}{2}-2\kappa}}^2\Big)
\Big[1+\frac{1}{N^2}\sum_{i,j=1}^N 2^{8\kappa L} \Big( \|{\cZ}^{\<2>}_{ij}\|^2_{\bC^{-1-\kappa}}+\|{\CZ}^{\<2>}_{jj}\|^2_{\bC^{-1-\kappa}}\Big)	\notag
\\
&+\Big(\frac{1}{N^2}\sum_{i,j=1}^N\|{\cZ}^{\<2>}_{ij}\|_{\bC^{-1-\kappa}}^2\Big)
+\Big(\frac{1}{N}\sum_{j=1}^N\|{\CZ}^{\<2>}_{jj}\|_{\bC^{-1-\kappa}}^2\Big)\Big] 	\notag
\\&+\Big(\frac{1}{N^2}\sum_{i=1}^N\Big(\sum_{j=1}^N\|X_j^2Z_i\|_{H^{-\frac12-2\kappa}}\Big)^{2}\Big)
+\Big(\frac{1}{N^2}\sum_{i=1}^N\Big(\sum_{j=1}^N\|X_iX_jZ_j\|_{H^{-\frac12-2\kappa}}\Big)^{2}\Big)
\notag\\
&+\Big(\frac{1}{N^2}\sum_{i=1}^N\Big(\sum_{j=1}^N
\|X_i\circ \CZ^{\<2>}_{jj}\|_{H^{-\frac12-2\kappa}}
+\|X_j\circ{\cZ}^{\<2>}_{ij}\|_{H^{-\frac12-2\kappa}}
\Big)^{2}\Big)	
+\frac{t^{-\gamma}}{N^2}\sum_{i=1}^N\|X_i\|_{L^2}^2,
\end{align*}
for $\gamma>0$.
Using the first inequality in  Lemma \ref{lem:phi1} the result holds with $R_N$ given by
$(\bar{Q}_N^3+R_N^2+R_N^1)^2+\frac1N\bar{Q}_N^4+1$.
	\end{proof}

  We denote by $\Phi_{i}$ the solution to \eqref{eq:21} starting from the stationary solution $Z_{i}(0)$, so that the process $\Phi_i-Z_i$ starts from the origin as in the proof of Lemma \ref{lem:zz1}.
By using Lemma \ref{lem:a1} and \cite[Lemma 3.8]{TW18}, we obtain the following result.

\bl \label{lem:A2}
For every $T\geq 1$ and $\kappa>0$, the following holds uniformly in $T$
\begin{align*}
\int_0^T\mathbf{E}\Big[\frac{1}{N}\sum_{i=1}^N\| (\Phi_i-Z_i)(t)\|_{L^2}^2\Big]\dif t & \lesssim T,
\\
\int_0^T\mathbf{E}\Big[\frac{1}{N}\sum_{i=1}^N\| (\Phi_i-Z_i)(t)\|_{\bC^{-\frac12-\kappa}}^2\Big]\dif t & \lesssim T.
\end{align*}
\el
\begin{proof}
	On every interval $[s,s+2]$, $s\geq1$ we decompose $\Phi_i-Z_i=X_i^s+Y_i^s$ with $X_i^s$ satisfies
	\begin{align*}X_i^s(t)=&-\frac{\l}N\int_s^t\sum_{j=1}^NP_{t-r}(X_j^s\prec \UU_>\cZ_{ij}^{\<2>}+X_i^s\prec\UU_>\cZ_{jj}^{\<2>})\dif r+\frac{\l}N\sum_{j=1}^N\tilde\cZ^{\<30>}_{ijj}(t).\end{align*}
	and
	$Y_i^s$ satifies \eqref{eq:22} with $X_i$ replaced by $X_i^s$ and the initial condition  $Y_i^s(s)=\Phi_i(s)-Z_i(s)+\frac{\l}N\sum_{j=1}^N\tilde{\cZ}^{\<30>}_{ijj}(s)$.
	By similar argument as in Lemma \ref{lem:X} we deduce
	\begin{align*}\mathbf{E}\Big(\frac{1}{N}\sum_{i=1}^N\| X_i^s(t)\|_{C([s,s+2];\bC^{\frac12-\kappa})}^2\Big)\lesssim 1,
	\end{align*}
	with the proportinal constant independent of $s$, $t$ and initial condition. We find Lemma \ref{lem:a1} also holds with $Y_i$, and $X_i$   replaced by  $Y_i^s$, $X_i^s$, respectively. For every $\tau\in\mZ$ in Lemma \ref{thm:renorm43} we know
	$$\sup_{s\geq0}\E [\sup_{s\leq t\leq s+2}\tau]\lesssim1,$$
	which implies that
		$$\sup_{s\geq1}\E \sup_{s\leq t\leq s+2} R_N^s(t)\lesssim1,$$
		for $R_N^s$ given as $R_N$ in the proof of Lemma \ref{lem:a1} with $X_i$ replaced by $X_i^s$.
	 Now by  \cite[Lemma 3.8]{TW18} and \eqref{yi}  we deduce for $1\leq s\leq t\leq s+1$
	\begin{align*}
	\mathbf{E}\Big(\frac{1}{N}\sum_{i=1}^N\| Y_i^s(t)\|_{L^2}^2\Big)\lesssim (t-s)^{-1/2},
	\end{align*}
	with the proportinal constant independent of $s$ and initial condition,
	which by taking integration over $t$ for \eqref{zmm} implies that for $s\geq 1$
		\begin{align}\label{zrx}
	\int_{s+\frac12}^{s+2}\mathbf{E}\Big(\frac{1}{N}\sum_{i=1}^N\| Y_i^s(t)\|_{H^{1-2\kappa}}^2\Big)\dif t\lesssim1,
	\end{align}
	with the proportinal constant independent of $s$.
	Then we know for $T\geq 1$
	\begin{align*}
	&\int_0^T\mathbf{E}\Big(\frac{1}{N}\sum_{i=1}^N\| (\Phi_i-Z_i)(t)\|_{L^2}^2\Big)\dif t
	\\\lesssim &\sum_{s=1}^{T-1}\int_{s+\frac12}^{s+\frac32}\mathbf{E}\Big(\frac{1}{N}\sum_{i=1}^N\| X_i^s(t)\|_{L^2}^2\Big)\dif t+
	\sum_{s=1}^{T-1}\int_{s+\frac12}^{s+\frac32}\mathbf{E}\Big(\frac{1}{N}\sum_{i=1}^N\| Y_i^s(t)\|_{L^2}^2\Big)\dif t
	\\&+\int_{0}^{\frac32}\mathbf{E}\Big(\frac{1}{N}\sum_{i=1}^N\| X_i^s(t)\|_{L^2}^2\Big)\dif t+
	\int_{0}^{\frac32}\mathbf{E}\Big(\frac{1}{N}\sum_{i=1}^N\| Y_i^s(t)\|_{L^2}^2\Big)\dif t
	\lesssim T,
	\end{align*}
	where we used Theorem \ref{Y:T4a} for the bound of the integral from $0$ to $\frac32$. 
 Similarly we obtain
 \begin{align*}
 &\int_0^T\mathbf{E}\Big(\frac{1}{N}\sum_{i=1}^N\| (\Phi_i-Z_i)(t)\|_{\bC^{-\frac12-2\kappa}}^2\Big)\dif t
 \lesssim T
 \end{align*} by using \eqref{zrx} and Besov embedding Lemma \ref{lem:emb}. Since $\kappa$ is arbitrary, the second result follows.
	\end{proof}
	
\section{Notation index}
\label{sec:extra1}
We collect some frequently used notations of this paper, with  their meanings and the pages where they first occur.
 Remark that all the objects named with $Q$ or $R$ are explicit combinations of norms of  $ \mathbb{Z}$
(see \eqref{e:defZ}), except that $Q^3_N$ and $Q^4_N$ also depend on $X$ (defined in  \eqref{eq:211}).
	
 \begin{center}
\renewcommand{\arraystretch}{1.1}
\begin{longtable}{lll}
\toprule
Symbol & Place  introduced & Page\\
\midrule
\endfirsthead
\toprule
Symbol & Place being introduced & Page\\
\midrule
\endhead
\bottomrule
\endfoot
\bottomrule
\endlastfoot
$\phi_i$, $P_i$ &
Eq.~\eqref{phi} &
\pageref{phi}\\
$Q^0_N$, $Q^1_N$, $Q^2_N$ &
Lemma~\ref{lem:sto} &
\pageref{lem:sto} \\
$Q^3_N$, $Q^4_N$ &
Proposition~\ref{Xi1} &
\pageref{Xi1} \\
$Q^5_N$, $Q^{51}_N$ &
Lemma~\ref{lem:zmm} &
\pageref{eq:Q} \\
$Q^{52}_N,Q^{53}_N$ &
Lemma~\ref{lem:X1} &
\pageref{lem:X1} \\
$Q^6_N$  &
Lemma~\ref{lem:X1} &
\pageref{page Q6} \\
$R_N^0$ &
Lemma~\ref{lem:X} &
\pageref{lem:X} \\
$R_N^1$ &
Lemma~\ref{lem:zz2} &
\pageref{lem:zz2} \\
$R_N^2$ &
Proposition~\ref{theta1} &
\pageref{theta1} \\
$R_N^4$ &
Lemma~\ref{Cubic} &
\pageref{Cubic} \\
$X_i$ &
Eq. \eqref{eq:211} &
\pageref{eq:211} \\
\end{longtable}
 \end{center}

\bibliographystyle{alphaabbr}
\bibliography{Reference}

\end{document}